\documentclass[a4paper]{amsart}
\usepackage{amsthm,amsmath,amssymb}
\usepackage[latin1]{inputenc}
\pagestyle{plain}

 \topmargin 0mm
 \oddsidemargin 8mm
 \evensidemargin 8mm
 \textwidth 145mm
 \textheight 214mm
 \setlength{\itemsep}{0pt}
 \setcounter{totalnumber}{3}
 \setcounter{topnumber}{1}
 \setcounter{bottomnumber}{3}
 \setcounter{secnumdepth}{3}
 \newtheorem{thm}{Theorem}
 \newtheorem{prop}[thm]{Proposition}
 \newtheorem{lemma}[thm]{Lemma}
 \newtheorem{kor}[thm]{Corollary}
 
 \theoremstyle{definition}
 \newtheorem{definition}[thm]{Definition}

 \theoremstyle{remark}
 \newtheorem{remark}[thm]{Remark}

\numberwithin{thm}{section}

 \def\id{{\rm id}}
 \def\Quot{{\rm Quot}}
\def\dom{{\rm dom}} 
\def\T{T}
\def\tr{tr}
\def\Tee{\mathcal{T}}
\def\bsh{$[b]$-short }
\def\sh{short }

\begin{document}
\begin{title}
{Truncations of level $1$ of elements in the loop group of a reductive group}
\end{title}
\author{Eva Viehmann}
\address{Technische Universit\"at M\"unchen\\Zentrum Mathematik - M11\\Boltzmannstr. 3\\85748 Garching\\Germany}
\date{}
\subjclass[2000]{14G35, 20G25, 14K10}
\keywords{truncation, Ekedahl-Oort stratification, Shimura variety, $\sigma$-conjugacy class, loop group}

\begin{abstract}{The aim of this article is to define and study a new invariant of elements of loop groups which is invariant under $\sigma$-conjugation by a hyperspecial maximal open subgroup and which we call the truncation of level 1. We classify truncations of level 1 and describe their specialization behavior. Furthermore we prove group-theoretic conditions for the set of $\sigma$-conjugacy classes obtained from elements of a given truncation of level 1 and in particular for the generic $\sigma$-conjugacy class in any given truncation stratum. In the last section we relate our invariant to the Ekedahl-Oort stratification of the Siegel moduli space and to generalizations to other PEL Shimura varieties.}
\end{abstract}
\maketitle
\section{Introduction}\label{intro}
Let $k$ be an algebraically closed field of characteristic $p>0$. Let $L$ be either $k((t))$ or $\Quot (W(k))$ and let $\mathcal{O}$ be the valuation ring. Here $W(k)$ is the ring of Witt vectors of $k$. We denote by $\sigma:x\mapsto x^{q}$ the Frobenius of $k$ over $\mathbb{F}_{q}$ for some fixed $q=p^r$ and also the Frobenius of $L$ over $F=\mathbb{F}_{q}((t))$ resp.~$F=\mathbb{Q}_{q}=\Quot (W(\mathbb{F}_q))$. Let $\mathcal{O}_F$ be the valuation ring of $F$. We denote the uniformizer $t$ or $p$ of $\mathcal{O}_F$ by $\epsilon$.

Let $G$ be a connected reductive group over $\mathcal{O}_F$. Then $G$ is  quasi-split and split over an unramified extension of $\mathcal{O}_F$ (compare Section \ref{secqsplit}). Let $B$ be a Borel subgroup of $G$ and let $\T$ be a maximal torus contained in $B$. Let $K=G(\mathcal{O})$ and let $I$ be the inverse image of $B(k)$ under the projection $K\rightarrow G(k)$. Let $K_1$ be the kernel of the projection $K\rightarrow G(k)$.

For $b\in G(L)$ we call $\{g^{-1}b\sigma(g)\mid g\in K\}$ the $K$-$\sigma$-conjugacy class of $b$, and $[b]=\{g^{-1}b\sigma(g)\mid g\in G(L)\}$ the $\sigma$-conjugacy class of $b$. In \cite{Kottwitz1} Kottwitz studies $\sigma$-conjugacy classes of elements of $G(L)$ and classifies them by two invariants, the Newton point and the Kottwitz point, cf. Section \ref{intro3}. In particular, he obtains a discrete invariant on the set of $K$-$\sigma$-conjugacy classes. The aim of this article is to study a second invariant of $K$-$\sigma$-conjugacy classes, namely the truncation of level 1 which we define as the associated $K$-$\sigma$-conjugacy class in $K_1\backslash G(L)/K_1$.

Note that for $L$ of mixed characteristic, both $K$-$\sigma$-conjugacy classes and $\sigma$-conjugacy classes occur naturally in the study of the reduction of Shimura varieties of PEL type, i.e.~for moduli spaces of abelian varieties or $p$-divisible groups with extra structure. For example, $p$-divisible groups of height $h$ over an algebraically closed field $k$ of characteristic $p$ are classified by their Dieudonn\'e modules. The Dieudonn\'e module is a pair $(\mathbf M,F)$ where $\mathbf M$ is a free $W(k)$-module of rank $h$ and where $F:\mathbf M\rightarrow \mathbf M$ is a $\sigma$-linear homomorphism satisfying $F(\mathbf M)\supseteq p\mathbf M$. Here $\sigma$ denotes the Frobenius of $W(k)$ over $\mathbb{Z}_p$. Choosing a basis for $\mathbf M$ we can write $F=b\sigma$ for some $b\in GL_h(W(k)[1/p])$. A change of the basis amounts to $\sigma$-conjugating $b$ by an element of $GL_h(W(k))=K$. Thus the isomorphism class of the $p$-divisible group corresponds to the $K$-$\sigma$-conjugacy class of $b$. Isogeny classes of $p$-divisible groups are likewise in bijection with rational Dieudonn\'e modules which are described by the $\sigma$-conjugacy classes of the corresponding elements $b\in GL_h(W(k)[1/p])$. In the function field case $L=k((t))$ a similar interpretation relates $K$-$\sigma$-conjugacy classes and conjugacy classes of elements of $G(L)$ to isomorphism classes and isogeny classes of local $G$-shtukas, respectively. 

\subsection{Classification of truncations of level 1}
 Let us first introduce some notation. Let $W=N_T(L)/T(L)$ denote the (absolute) Weyl group of $\T$ in $G$ where $N_T$ denotes the normalizer of $T$. Let $\widetilde{W}=N_T(L)/T(\mathcal{O})\cong W\ltimes X_*(\T)$ denote the extended affine Weyl group. For each $w\in W$ we choose a representative in $N_\T(\mathcal{O})$. We denote this representative by the same letter as the element itself. If $M$ is a Levi subgroup of $G$ containing $\T$ let $W_M$ be the Weyl group of $M$ and denote by $^{M}W$ resp.~${}^M\widetilde{W}$ the set of elements $x$ of $W$ resp.~$\widetilde{W}$ that are shortest representatives of their coset $W_Mx$. Similarly, $W^{M}$ denotes the set of elements $x$ that are the shortest representatives of their cosets $xW_{M}$ and accordingly for $\widetilde W$. For a dominant $\mu\in X_*(\T)$ let $M_{\mu}$ be the centralizer of $\mu$ and let $^{\mu}W=\sigma^{-1}( {}^{M_{\mu}}W)$. Let $P_{\mu}=M_{\mu}B$, a standard parabolic with Levi subgroup $M_{\mu}$. Let $x_{\mu}=w_0w_{0,{\mu}}$ where $w_0$ denotes the longest element of $W$ and where $w_{0,{\mu}}$ is the longest element of $W_{M_\mu}$. Let $\tau_{\mu}=x_{\mu}\epsilon^{\mu}$ where $\epsilon^{\mu}$ is the image of $\epsilon$ under $\mu:\mathbb{G}_m\rightarrow \T$. Then $\tau_{\mu}$ is the shortest element of $W\epsilon^{\mu}W$.

The classification of $K$-$\sigma$-conjugacy classes of elements of $K_1\backslash G(L)/K_1$ is given by the following theorem which we prove in Section \ref{sec2}. The second part of the theorem establishes a relation between the subdivisions of $K_1\backslash G(L)/K_1$ according to $K$-$\sigma$-conjugacy classes and according to Iwahori-double cosets.

\begin{thm}\label{propdef}
\begin{enumerate}
\item Let $\Tee=\{(w,\mu)\in W\times X_*(\T)_{\dom}\mid w\in {}^{\mu}W\}$. Then the map assigning to $(w,\mu)$ the $K$-$\sigma$-conjugacy class of $K_1w\tau_{\mu}K_1$ is a bijection between $\Tee$ and the set of $K$-$\sigma$-conjugacy classes in $K_1\backslash G(L)/K_1$.
\item Let $\mu\in X_*(T)_{\dom}$ and let $w\in {}^{\mu}W$. Then each element of $Iw\tau_{\mu}I$ is $I$-$\sigma$-conjugate to an element of $K_1w\tau_{\mu}K_1$.
\end{enumerate}
\end{thm}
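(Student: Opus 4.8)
The plan is to fix the dominant cocharacter and treat the two parts separately. \emph{For part (1):} since $G$, hence $K=G(\mathcal O)$, is defined over $\mathcal O_F$, the Frobenius $\sigma$ stabilizes $K$, so by the Cartan decomposition $G(L)=\coprod_{\mu\in X_*(T)_{\dom}}K\epsilon^\mu K$ the dominant cocharacter $\mu$ is an invariant of $K$-$\sigma$-conjugacy, and it suffices to classify, for each fixed $\mu$, the $K$-$\sigma$-conjugacy classes inside $K_1\backslash K\epsilon^\mu K/K_1$. Because $\sigma$-conjugation by $K_1$ fixes every $K_1$-double coset, this action factors through $G(k)=K/K_1$; writing $b=k_1\epsilon^\mu k_2$ with $k_i\in K$ and passing to the reductions $\bar k_i\in G(k)$ yields a $G(k)$-equivariant bijection of $K_1\backslash K\epsilon^\mu K/K_1$ with a quotient $(G(k)\times G(k))/\bar S$, where $\bar S$ is the image in $G(k)\times G(k)$ of $S=K\cap\epsilon^\mu K\epsilon^{-\mu}$, and $h\in G(k)$ acts by $(a,b)\mapsto(h^{-1}a,\,b\,\sigma(h))$. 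For $\mu$ dominant $\bar S$ is built from a parabolic with Levi $M_\mu$ and its opposite, glued along $M_\mu$; after normalizing the first coordinate by $h$, the orbit problem should collapse to classifying $c\in G(k)$ modulo a relation $c\sim q_1\,c\,\sigma(q_2)$, with $q_1$ ranging over a parabolic with Levi $M_\mu$, $q_2$ over a Frobenius-conjugate opposite parabolic, and a matching condition on their Levi components.

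I would settle that last step with the Bruhat decomposition of $G(k)$ together with Lang's theorem for the connected group $M_\mu$: the Levi-internal part of the data — the torus contribution and the $W_{M_\mu}$-part — is killed by Lang, and what survives is the coset space $W_{M_\mu}\backslash W$, twisted because $\sigma$ enters on one side only, which is exactly why the index set comes out as ${}^\mu W=\sigma^{-1}({}^{M_\mu}W)$. Chasing the representative attached to $w\in{}^\mu W$ back through these identifications should return $w x_\mu\epsilon^\mu=w\tau_\mu$, giving simultaneously surjectivity and injectivity of the map out of $\Tee$, hence (1).

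\emph{For part (2):} fix $\mu$ dominant and $w\in{}^\mu W$, and take $b=i_1(w\tau_\mu)i_2$ with $i_1,i_2\in I$; write $w\tau_\mu=(wx_\mu)\epsilon^\mu$ with $wx_\mu\in W\subseteq K$. Using the decomposition of $I$ into $K_1$ times lifts of $B(k)$, I would use that, $\mu$ being dominant, conjugation by $\epsilon^\mu$ (resp.\ $\epsilon^{-\mu}$) carries the relevant positive (resp.\ negative) root subgroups $U_\alpha(\mathcal O)$ into $U_\alpha(\epsilon\mathcal O)\subseteq K_1$: moving the part of $i_1$ lying over the unipotent radical of $P_\mu$ rightward past $\epsilon^\mu$ absorbs it into $K_1$, and symmetrically on the other side, while the leftover torus and $M_\mu$-data are removed by a single $I$-$\sigma$-conjugation in which $\sigma(w)\in{}^{M_\mu}W$ governs how $(w\tau_\mu)^{-1}$ moves the remaining Iwahori component. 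The outcome is an element of $K_1(wx_\mu)\epsilon^\mu K_1=K_1 w\tau_\mu K_1$; the hypothesis $w\in{}^\mu W$ is precisely what forces this $\sigma$-twisted reduction to close up at $w\tau_\mu$ itself and not at a strictly shorter element of $\widetilde W$. I expect the technical heart of the whole argument to be the orbit computation of part (1): pinning down the model $(G(k)\times G(k))/\bar S$ with the right parabolic conventions, and then running the Bruhat-and-Lang analysis while keeping exact track of the Frobenius twist — this is what simultaneously fixes the index set ${}^\mu W$ and the canonical representatives $w\tau_\mu$ that part (2) then has to reproduce at Iwahori level.
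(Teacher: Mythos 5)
Your reduction to a $G(k)$-orbit problem on $K_1\backslash K\epsilon^\mu K/K_1$, and the identification of the stabilizer $\bar S$ as glued from a parabolic with Levi $M_\mu$ and an opposite one, is exactly the right frame, and matches the paper's starting point. The gap is in the claim that a single Bruhat decomposition of $G(k)$ combined with one application of Lang's theorem collapses the orbit problem to the coset space $W_{M_\mu}\backslash W$. One Bruhat step yields only the \emph{double} coset $W_{M_1}\backslash W/W_{M_1'}$ (with $M_1=\sigma^{-1}(M_\mu)$ and $M_1'=x_\mu M_\mu x_\mu^{-1}$) in which the representative $c$ lies, and this is a strictly coarser invariant than an element of ${}^\mu W\cong W_{M_1}\backslash W$: there are in general several orbits inside a single parabolic double coset, distinguished by data living in progressively smaller standard Levi subgroups. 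Nor does Lang's theorem ``kill the $W_{M_\mu}$-part''; Lang only trivializes the action of a connected group on itself, and after one Bruhat step the residual element $b_1$ lives in a smaller Levi $M_1'$ on which the twist $g\mapsto x_\mu\sigma(u_1gu_1^{-1})x_\mu^{-1}$ still acts nontrivially. This is precisely why the paper runs B\'edard's iterative algorithm (building the sequences $u_i$, $M_i$, $M_i'$, $b_i$ and verifying properties a)--e) at each stage): each iteration refines the double coset one level further, and only after the Levis stabilize at $M_n=M_n'$ does a Lang--Steinberg argument finish the job. Without the iteration, the surjectivity and injectivity of the map out of $\Tee$ are not established, and this iteration is the actual content of the theorem, not a bookkeeping step.

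The same issue recurs in your sketch of part (2). You correctly use the Iwahori decomposition of $I$ and the dominance of $\mu$ (Section~\ref{sec25}) to absorb the $N_\mu$-component of the right Iwahori factor into $K_1$, but then assert that ``a single $I$-$\sigma$-conjugation'' disposes of the remaining $I_{M_\mu}$-component. It does not: after one conjugation the residual Iwahori factor lands in $I_{M_\mu}\cap (wx_\mu)^{-1}I_{M_1}(wx_\mu)$, and you must iterate this contraction until it stabilizes at the subgroup $I_\infty=I\cap\bigcap_{i\geq 0}({\rm Ad}_{(wx_\mu)^{-1}}\sigma^{-1})^i(M_\mu(\mathcal{O}))$, on which the twist is an automorphism. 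Only then can the Lang--Steinberg lemma for pro-groups (Lemma~\ref{thmlang} in the paper, which you would also need to state and prove, since the classical finite-dimensional Lang theorem does not directly apply to these subgroups of $K$) be invoked to absorb what remains. In both parts, the iterate-then-apply-Lang structure is essential and cannot be compressed to one step.
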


\begin{definition}
We denote by $\tr$ the map $G(L)\rightarrow \Tee$ assigning to each $b$ the element of $\Tee$ corresponding to its $K$-$\sigma$-conjugacy class in $K_1\backslash G(L)/K_1$ under the bijection in Theorem \ref{propdef}. The pair $\tr(b)\in W\times X_*(\T)$ is called the \emph{truncation of level $1$} of $b$.
\end{definition}

Let $L=k((t))$. In this case we can also study the variation of the truncation of level $1$ in families. Let $LG$ be the loop group of $G_{\mathbb{F}_{q}}$, i.e.~the group ind-scheme representing the functor on $\mathbb{F}_{q}$-algebras $R\mapsto G(R((t)))$, compare \cite{Faltings}, Definition 1. We show in Section \ref{secclosure} that for each $(w,\mu)\in\Tee$ the set of $b\in G(L)$ with $\tr(b)=(w,\mu)$ is the set of $k$-valued points of a bounded locally closed subscheme of the loop group $LG$ of $G_{\mathbb{F}_{q}}$. For the notion of boundedness see Section \ref{secnot}.

\begin{definition}
Let $(w,\mu)\in\Tee$ and assume that char$(F)=p$. Let $S_{w,\mu}$ be the reduced subscheme of the loop group of $G_{\mathbb{F}_{q}}$ such that $S_{w,\mu}(k)$ consists of those $g\in G(k((t)))$ with $\tr(g)=(w,\mu)$.
\end{definition}

The closure of a stratum $S_{w,\mu}$ in $LG$ is a union of finitely many strata (see Lemma \ref{thmclosure}). 
\begin{thm}\label{thmclos} Let $S_{w',\mu'},S_{w,\mu}\subseteq LG$ be two truncation strata. Then 
$S_{w',\mu'}\subseteq\overline{S_{w,\mu}}$ if and only if there is a $\tilde{w}\in W$ with $\tilde{w}w'\tau_{\mu'}\sigma(\tilde{w})^{-1}\leq w\tau_{\mu}$ with respect to the Bruhat order.
\end{thm}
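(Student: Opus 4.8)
The plan is to deduce both implications from the single assertion that the distinguished point $w'\tau_{\mu'}$ of the stratum $S_{w',\mu'}$ lies in $\overline{S_{w,\mu}}(k)$. This reduction is formal: by construction $\tr(w'\tau_{\mu'})=(w',\mu')$, so $w'\tau_{\mu'}\in S_{w',\mu'}$, and since $\overline{S_{w,\mu}}$ is a union of finitely many strata (Lemma~\ref{thmclosure}) it contains all of $S_{w',\mu'}$ as soon as it contains $w'\tau_{\mu'}$. Two standing facts will be used. First, $Iw\tau_\mu I\subseteq S_{w,\mu}$ by Theorem~\ref{propdef}(2), and the closure in $LG$ of an Iwahori double coset $IyI$ is $\bigsqcup_{z\le y}IzI$ for the Bruhat order on $\widetilde W$; hence $\bigsqcup_{z\le w\tau_\mu}IzI\subseteq\overline{S_{w,\mu}}$. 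Second, for fixed $g\in K$ the map $b\mapsto g^{-1}b\sigma(g)$ is an automorphism of the $k$-ind-scheme $LG_k$, so $\overline{S_{w,\mu}}$ is again stable under $K$-$\sigma$-conjugation, in particular under $\sigma$-conjugation by the chosen representatives in $N_\T(\mathcal O)\subseteq K$ of the elements of $W$.

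Granting this, the implication ``$\Leftarrow$'' is immediate: if $\tilde w\,w'\tau_{\mu'}\,\sigma(\tilde w)^{-1}\le w\tau_\mu$ for some $\tilde w\in W$, then $\tilde w\,w'\tau_{\mu'}\,\sigma(\tilde w)^{-1}\in\overline{S_{w,\mu}}$ by the first fact, and $\sigma$-conjugating by $\tilde w$ and using the second fact gives $w'\tau_{\mu'}\in\overline{S_{w,\mu}}(k)$.

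For ``$\Rightarrow$'' the starting point is the equality $S_{w,\mu}(k)=\bigcup_{g\in K}g\,(Iw\tau_\mu I)\,\sigma(g)^{-1}$, valid because $K_1w\tau_\mu K_1\subseteq Iw\tau_\mu I\subseteq S_{w,\mu}$ and $S_{w,\mu}$ is $K$-$\sigma$-stable. Writing $K=\bigsqcup_{\tilde w\in W}I\tilde wI$ and using that $i(Iw\tau_\mu I)\sigma(i)^{-1}=Iw\tau_\mu I$ for $i\in I$, one rewrites $S_{w,\mu}(k)$ as a union, over $\tilde w\in W$ and $i\in I$, of the sets $i\bigl(\tilde w(Iw\tau_\mu I)\sigma(\tilde w)^{-1}\bigr)\sigma(i)^{-1}$. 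The key structural input I would exploit here is that $\tau_\mu$ is the \emph{shortest} element of $W\epsilon^\mu W$, which makes the relevant products $(\tilde w w)\,\tau_\mu\,\sigma(\tilde w)^{-1}=\tilde w\,w\tau_\mu\,\sigma(\tilde w)^{-1}$ length-controlled; together with $w\in{}^\mu W$ this keeps each building block $\tilde w(Iw\tau_\mu I)\sigma(\tilde w)^{-1}$ inside a small, explicitly describable union of Iwahori cells attached to elements $\le\tilde w\,w\tau_\mu\,\sigma(\tilde w)^{-1}$. One then computes the closures of these building blocks and of their $I$-$\sigma$-translates -- each of which, once the free part of the acting group is divided out, becomes a tower of affine-space bundles, so that its closure can be read off directly -- and the outcome is that $\overline{S_{w,\mu}}$ meets an Iwahori cell $IzI$ precisely when $\tilde w z\sigma(\tilde w)^{-1}\le w\tau_\mu$ for some $\tilde w\in W$. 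Applying this to $z=w'\tau_{\mu'}$ finishes the argument.

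The main obstacle is this last step. Merely recording which Iwahori cells the building blocks meet is too coarse: it only yields $\overline{S_{w,\mu}}\subseteq\bigcup_{\tilde w\in W}\overline{I\,\tilde w\,w\tau_\mu\,\sigma(\tilde w)^{-1}\,I}$, a locus strictly larger than the one claimed, because conjugation by a finite Weyl element does not preserve the Bruhat order and so the lower cells in $\overline{I\,\tilde w\,w\tau_\mu\,\sigma(\tilde w)^{-1}\,I}$ need not really occur. One therefore has to determine the \emph{closures} of the twisted-conjugate subvarieties $g\,(Iw\tau_\mu I)\,\sigma(g)^{-1}$ for $g\notin I$ -- which are proper subvarieties of single Iwahori cells -- and not just their cell supports; this forces one to use the affine-space-bundle structure of $Iw\tau_\mu I$ and the precise way it is deformed by $\sigma$-conjugation, and it is the technical core of the proof. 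A secondary, purely combinatorial, point to be checked along the way is that the relation ``there exists $\tilde w\in W$ with $\tilde w z\sigma(\tilde w)^{-1}\le w\tau_\mu$'' is transitive on the elements $w\tau_\mu$, $(w,\mu)\in\Tee$, so that the description is compatible with iterated specialization; this again rests on the minimal-length properties of these elements.
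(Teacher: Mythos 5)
Your reduction of the statement to checking $w'\tau_{\mu'}\in\overline{S_{w,\mu}}(k)$ and the proof of ``$\Leftarrow$'' are both fine. The proof of ``$\Rightarrow$'', however, is not a proof: after writing $S_{w,\mu}(k)=\bigcup_{g\in K}g\,(Iw\tau_\mu I)\,\sigma(g)^{-1}$ you correctly observe that merely recording which Iwahori cells the building blocks hit is insufficient and that one must determine the actual closures of the pieces $g(Iw\tau_\mu I)\sigma(g)^{-1}$ for $g\notin I$ --- but you then explicitly label this ``the technical core of the proof'' and leave it undone. The crucial claim that ``$\overline{S_{w,\mu}}$ meets $IzI$ precisely when $\tilde w z\sigma(\tilde w)^{-1}\le w\tau_\mu$ for some $\tilde w\in W$'' is asserted, not derived, and it is exactly the content of the theorem. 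Moreover, even the upper bound you do write down, $\overline{S_{w,\mu}}\subseteq\bigcup_{\tilde w\in W}\overline{I\tilde w\,w\tau_\mu\,\sigma(\tilde w)^{-1}I}$, does not follow from the cell-membership argument you describe: for $g\in I\tilde wI$ the set $g(Iw\tau_\mu I)\sigma(g)^{-1}$ sits inside $I\tilde wIw\tau_\mu I\sigma(\tilde w)^{-1}I$, and by Remark~\ref{remhe} this spreads over cells $I\tilde w_1 w\tau_\mu\sigma(\tilde w_2)^{-1}I$ with possibly $\tilde w_1\neq\tilde w_2$, which need not lie below $\tilde w\,w\tau_\mu\,\sigma(\tilde w)^{-1}$.

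The paper sidesteps this geometric difficulty with Lemma~\ref{thmclosure}(4), which identifies $\overline{S_{w,\mu}}$ \emph{set-theoretically} with the $K$-$\sigma$-orbit of the single closed cell $\overline{Iw\tau_\mu I}$; the proof is a degeneration argument over $k[[z]]$ that lifts an $I$-coset to a $K$-coset using properness of $K/I\cong G(k)/B(k)$. Once that is in place, the theorem reduces to a purely combinatorial problem --- which truncation types occur in $IyI$ for $y\le w\tau_\mu$ --- and the paper settles it via Remark~\ref{remhe}, Lemma~\ref{lemfacts}, Lemma~\ref{lemhe36}, Lemma~\ref{lemhe}, and a final Claim proved with the inductive machinery of Theorem~\ref{propdef}. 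If you want to make your approach work, the missing ingredient is precisely an analogue of Lemma~\ref{thmclosure}(4) (or some other a priori control on $\overline{S_{w,\mu}}$ as a union of cells); without it the ``tower of affine-space bundles'' analysis you gesture at is genuinely hard and you have not shown how to carry it out.
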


For $F=\mathbb{Q}_{q}$ it is not clear how to define an ind-scheme having $G(L)$ as its set of $k$-valued points. However one can study the stratifications induced on the reduction modulo $p$ of certain Shimura varieties. The main part of this paper is concerned with elements of $G(L)$ for both cases or, whenever a scheme structure is involved, the equicharacteristic case. The applications of our theory to Shimura varieties are detailed in Section \ref{secpolpdiv}.

\subsection{Truncations of level 1 and $\sigma$-conjugacy classes}\label{intro3}
A second major goal of this article is to compare the stratification of $LG$ by truncations of level 1 to the stratification by $\sigma$-conjugacy classes. More precisely, we study when a given truncation stratum intersects a given $\sigma$-conjugacy class non-trivially. Our main result in this context (Theorem \ref{thmmain}) is a necessary condition for non-emptiness of these intersections which determines in particular the generic $\sigma$-conjugacy class in each trunction stratum.

We first review Kottwitz's classification \cite{Kottwitz1} of the set $B(G)$ of $\sigma$-conjugacy classes of elements $b\in G(L)$ which generalizes the notion of Newton polygons (compare also \cite{RapoportRichartz}, Section 1 for a more complete review of these results). Each $\sigma$-conjugacy class is determined by two invariants. One of them is given by a map $\kappa_G:B(G)\rightarrow \pi_1(G)_{\Gamma}$ where $\pi_1(G)$ is the quotient of $X_*(\T)$ by the coroot lattice and where $\Gamma$ is the absolute Galois group of $F$. There is the following explicit description of $\kappa_G$. Let $b\in G(L)$ and let $\mu\in X_*(\T)$ be such that $b\in K\epsilon^{\mu}K$, compare Section \ref{sec23}. Then $\kappa_G(b)$ is the image of $\mu$ under the canonical projection from $X_*(\T)$ to $\pi_1(G)_{\Gamma}$. The second invariant is the so-called the Newton point $\nu=\nu_b$ of $b$, an element of $(X_*(T)_{\mathbb{Q}}/W)^{\Gamma}$, the set of $\Gamma$-invariant $W$-orbits on $X_*(T)\otimes \mathbb{Q}$. We usually consider the dominant representative of $\nu$, an element of $X_*(T)_{\mathbb{Q}}^{\Gamma}$ which we denote by the same letter $\nu$. This invariant is the direct analog of the usual Newton polygon classifying $F$-isocrystals over an algebraically closed field. The images of $\nu_b$ and $\kappa(b)$ in $\pi_1(G)_{\Gamma}\otimes \mathbb{Q}$ coincide. Note that Kottwitz's original article only considers the case of mixed characteristic, but the other case can be treated in exactly the same way. Furthermore, the two invariants $\nu$ and $\kappa$ lie in groups that are independent of the choice of $L$, compare Remark \ref{newremark'}. 

We further need the partial order on $B(G)$ defined by Rapoport and Richartz in \cite{RapoportRichartz}. It is given by $[b]\preceq [b']$ if and only if $\kappa_G(b)=\kappa_G(b')$ and $\nu_b\preceq\nu_{b'}$. Here the second condition means that $\tilde{\nu}_{b'}-\tilde{\nu}_b$ is a linear combination of positive coroots with coefficients in $\mathbb{Q}_{\geq 0}$ where $\tilde{\nu}_{b'}$ and $\tilde{\nu}_b$ are dominant representatives of the two orbits (compare Lemma 2.2 of loc.~cit.). Their Theorem 3.6 shows that for each $[b]$, the union of all $\sigma$-conjugacy classes which are less or equal to $[b]$ is closed in the loop group. More precisely, they show a corresponding statement over a field $F$ of mixed characteristic. The function field analog can be shown in a similar, but slightly easier way using properties of the affine Grassmannian, compare \cite{equidim}, Theorem 7.3. For split groups $G$, \cite{grothconj} shows that $\preceq$ describes the precise closure relations of the classes $[b]\subset LG$.

Let $[b]\in B(G)$. Let $M$ be the centralizer of the dominant Newton point $\nu_b$ of $b$, the Levi component of a standard parabolic subgroup defined over $\mathcal{O}_F$. In Section \ref{secfundalc} we define \bsh elements as elements $x$ of length 0 in $\widetilde{W}_M$ with $M$-dominant Newton point $\nu_b$ and $\kappa_G(x)=\kappa_G(b)$. In particular, \bsh elements are contained in $[b]$. The following theorem is a necessary condition for non-emptiness of intersections of truncation strata and $\sigma$-conjugacy classes. It is equivalent to non-emptiness of the intersection of a $\sigma$-conjugacy class with the closure of a truncation stratum and can (contrary to the definition of non-emptiness itself) be effectively checked in finite time.
\begin{thm}\label{thmmain}
Let $b\in G(k((t)))$, and let $(w,\mu)=\tr (b)$. Then there is a \bsh element $x\in \overline{S_{w,\mu}}$.
\end{thm}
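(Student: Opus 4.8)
The plan is to show that the $\sigma$-conjugacy class $[b]$ meets the closure $\overline{S_{w,\mu}}$ in a $[b]$-short element, by first locating \emph{some} element of $[b]$ inside $\overline{S_{w,\mu}}$ and then moving it within $[b]$ to a $[b]$-short one, staying inside the closed set $\overline{S_{w,\mu}}$. The first ingredient is cheap: $b$ itself lies in $S_{w,\mu}$ by definition, so $[b]\cap S_{w,\mu}\neq\emptyset$, and a fortiori $[b]\cap\overline{S_{w,\mu}}\neq\emptyset$. So the real content is to produce a $[b]$-short representative that can be reached from this starting point without leaving $\overline{S_{w,\mu}}$. Here I would invoke Theorem~\ref{thmclos}: membership of a stratum $S_{w',\mu'}$ in $\overline{S_{w,\mu}}$ is governed by the Bruhat-order condition $\tilde w w'\tau_{\mu'}\sigma(\tilde w)^{-1}\le w\tau_{\mu}$ for some $\tilde w\in W$, so it suffices to exhibit a $[b]$-short $x$ whose truncation $(w',\mu')=\tr(x)$ satisfies this inequality.

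The second ingredient is the structure theory of $\sigma$-conjugacy classes recalled in Section~\ref{intro3}: writing $M$ for the centralizer of the dominant Newton point $\nu_b$, the $[b]$-short elements are the length-$0$ elements of $\widetilde W_M$ with $M$-dominant Newton point $\nu_b$ and $\kappa_G$-invariant $\kappa_G(b)$, and such elements exist and lie in $[b]$. The strategy is to start from $b$ and successively replace it by $\sigma$-conjugate elements of non-increasing ``position'', using the Iwahori-theoretic reduction: by part (2) of Theorem~\ref{propdef} every element of $Iw\tau_\mu I$ is $I$-$\sigma$-conjugate into $K_1w\tau_\mu K_1$, so one can work at the level of the Iwahori double cosets $Iy I$ indexed by $\widetilde W$, where $\sigma$-conjugation by $I$ induces the usual ``$\sigma$-twisted'' operations (cyclic shift / conjugation by simple reflections) on $\widetilde W$. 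The key point is that each such elementary step either fixes the Bruhat class or decreases it, so iterating lands us, by the standard reduction-to-minimal-length argument (in the style of He's work on minimal length elements in twisted conjugacy classes), at a minimal-length, i.e. length-$0$ in $\widetilde W_M$, hence $[b]$-short, element $x$ with $\tr(x)=(w',\mu')$ and $\tilde w w'\tau_{\mu'}\sigma(\tilde w)^{-1}\le w\tau_\mu$ for a suitable $\tilde w$. By Theorem~\ref{thmclos} this gives $S_{w',\mu'}\subseteq\overline{S_{w,\mu}}$, so $x\in\overline{S_{w,\mu}}$.

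More concretely, I would argue as follows. Pick any lift of $b$ to an Iwahori double coset $IyI$ with $y\in\widetilde W$ and $KyK=K\epsilon^\mu K$; then $\tr(b)=(w,\mu)$ forces (after the reduction in Theorem~\ref{propdef}) that $y$ lies in the $\sigma$-twisted orbit of $w\tau_\mu$ under the relevant $W$-action, so in particular $w'\tau_{\mu'}\le w\tau_\mu$ can be arranged along the way. Now run the combinatorial reduction on $y$ within its $\sigma$-conjugacy class in $\widetilde W$: by the theory of minimal length elements one reaches some $y_{\min}$ of minimal length in its $\sigma$-twisted conjugacy class, and minimal length elements of $\widetilde W$ in a given $\sigma$-conjugacy class correspond exactly to fundamental (= length-$0$) elements twisted into the standard Levi $M$ attached to $[b]$ — these are precisely the $[b]$-short elements. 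Each elementary reduction step $y\rightsquigarrow s y \sigma(s)$ or a cyclic shift keeps us inside $\overline{S_{w,\mu}}$ because such a step never increases the Bruhat position, which is the precise condition appearing in Theorem~\ref{thmclos}; hence $x:=y_{\min}$ is $[b]$-short and $S_{\tr(x)}\subseteq\overline{S_{w,\mu}}$.

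The main obstacle I expect is the compatibility between two a priori different orderings: the Bruhat order on $\widetilde W$ that controls closures of truncation strata via Theorem~\ref{thmclos}, and the ``reduction'' preorder on a $\sigma$-conjugacy class coming from elementary conjugations and cyclic shifts. One has to check that each elementary step used in the minimal-length reduction is compatible with the twisted-conjugation-plus-Bruhat condition $\tilde w w'\tau_{\mu'}\sigma(\tilde w)^{-1}\le w\tau_\mu$ — i.e. that conjugation by a simple reflection, which may raise the bare length, is nonetheless absorbed by the freedom to choose $\tilde w\in W$ in that condition, while genuine length-decreasing steps strictly improve the Bruhat position. Handling the cyclic-shift steps (where the length is unchanged but the element changes) and ensuring the process terminates at a length-$0$ element of $\widetilde W_M$ with the correct Newton and Kottwitz invariants — rather than merely at a local minimum — is the technical heart; I would lean on the general structure results of He on minimal length elements in (twisted) conjugacy classes of (extended) affine Weyl groups to close this gap, together with the explicit description of $[b]$-short elements from Section~\ref{secfundalc}.
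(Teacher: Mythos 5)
Your strategy is genuinely different from the paper's and, as written, has a gap. The paper's proof of Theorem~\ref{thmmain} reduces, via Remark~\ref{remit} and Lemma~\ref{thmclosure}(4), to Proposition~\ref{propmain}, whose proof is a \emph{geometric degeneration}: fix a \bsh element $y$, write $b=g^{-1}y\sigma(g)$, use the Iwasawa and Bruhat decompositions to put $g=nm\tilde w$ with $n\in N(L)$, $m\in M(L)$, $\tilde w\in {}^MW$, and then contract the $N$-part of the expression to the identity along a one-parameter family $a\mapsto\chi(a)\tilde n\chi(a)^{-1}$ (with $\chi$ central in $M$). The resulting element $\tilde w^{-1}m^{-1}y\sigma(m\tilde w)$ lies in $\overline{IxI}$ and is manifestly $K$-$\sigma$-conjugate, with $\tilde w\in {}^MW\subset W$, to a \bsh element. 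No minimal-length or cyclic-shift combinatorics on $\widetilde W$ is used.

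Your proposal replaces this with a He-style reduction. The first gap: you at times run the combinatorial reduction on $w\tau_\mu$ as an element of $\widetilde W$, which only detects the $\sigma$-twisted conjugacy class of $w\tau_\mu$ in $\widetilde W$, hence the single $\sigma$-conjugacy class $[w\tau_\mu]\subset G(L)$. But the theorem is for an arbitrary $b$ with $\tr(b)=(w,\mu)$, and $Iw\tau_\mu I$ meets many $\sigma$-conjugacy classes; for $[b]\ne [w\tau_\mu]$ your reduction produces a $[w\tau_\mu]$-short element, not a $[b]$-short one. If instead you run the reduction on the actual element $b\in Iw\tau_\mu I$ by $I$-$\sigma$-conjugation (so that $[b]$ is preserved), you hit the second gap, which you partially flag but underestimate: the condition in Theorem~\ref{thmclos} allows twisting only by $\tilde w\in W$ (the \emph{finite} Weyl group), whereas a cyclic shift in $\widetilde W$ by the affine simple reflection is not of this form and cannot be absorbed into $\tilde w$. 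So there is no reason the intermediate cosets stay in $\overline{S_{w,\mu}}$, and ``never increases the Bruhat position'' is not adequate to control the step. One would have to prove a statement like Proposition~\ref{propmain} directly: that one can reach a \bsh element by $\sigma$-conjugation with an element of $W$ alone, a fact which the paper obtains precisely from the degeneration argument and which the He machinery, as invoked, does not give you.
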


We call an element $x\in\widetilde{W}$ \sh if it is \bsh for some $[b]\in B(G)$. Note that each stratum $S_{w,\mu}$ in the loop group is irreducible (Lemma \ref{thmclosure}), hence it contains a unique generic $\sigma$-conjugacy class. From Theorem \ref{thmmain}, one deduces the following corollary which characterizes this $\sigma$-conjugacy class. 

\begin{kor}\label{corgennp}
Let $[b]$ be the generic $\sigma$-conjugacy class in $S_{w,\mu}\subseteq LG$ for some $w\in {}^{\mu}W$. Then $[b]$ is equal to the unique maximal element in the set of $\sigma$-conjugacy classes of \sh elements $x\in \widetilde{W}$ such that $x\in  \overline{S_{w,\mu}}$. This is also the same as the maximal class $[x]$ among all $x\in \widetilde{W}$ with $x\leq w\tau_{\mu}$ in the Bruhat order. 
\end{kor}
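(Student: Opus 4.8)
\textbf{Proof proposal for Corollary \ref{corgennp}.}

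The plan is to reduce the corollary to Theorem \ref{thmmain} together with the closure relation of Theorem \ref{thmclos}, and to add a separate argument that the ``maximal \sh class with $x\in\overline{S_{w,\mu}}$'' coincides with the ``maximal class $[x]$ among $x\le w\tau_\mu$.'' First I would observe that since $S_{w,\mu}$ is irreducible (Lemma \ref{thmclosure}) it has a well-defined generic $\sigma$-conjugacy class $[b]$, and that $[b]$ is characterized among all $\sigma$-conjugacy classes meeting $S_{w,\mu}$ as the unique maximal one for the Rapoport--Richartz order $\preceq$: this uses that the closure of a $\sigma$-conjugacy class in $LG$ is the union of the smaller classes (the function-field version recalled in the introduction, \cite{equidim}, Theorem 7.3, resp.\ \cite{grothconj}), so the generic point of the irreducible $S_{w,\mu}$ lies in the unique class not contained in the closure of any other class it meets. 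Hence it suffices to identify this maximal class.

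Next, Theorem \ref{thmmain} applied to a $b$ with $\tr(b)=(w,\mu)$ produces a \bsh element $x$ for $[b]$ with $x\in\overline{S_{w,\mu}}$; since \bsh elements of $[b']$ lie in $[b']$, this says there is a \sh element $x\in\overline{S_{w,\mu}}$ with $[x]=[b]$. Conversely, for any \sh element $y\in\overline{S_{w,\mu}}$, the stratum $S_{\tr(y)}$ satisfies $S_{\tr(y)}\subseteq\overline{S_{w,\mu}}$, so $[y]$ is a class meeting $\overline{S_{w,\mu}}$ and therefore $[y]\preceq[b]$ by maximality of the generic class on the closure. This shows $[b]$ is the unique maximal class among $\{[y]\mid y\text{ \sh},\ y\in\overline{S_{w,\mu}}\}$, which is the first assertion. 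For the second assertion I would use Theorem \ref{thmclos}: $x\le w\tau_\mu$ in the Bruhat order is exactly the condition (with $\tilde w=1$) for $S_{\tr(x)}\subseteq\overline{S_{w,\mu}}$, and more generally $S_{x',\mu'}\subseteq\overline{S_{w,\mu}}$ iff some $\widetilde{W}$-twisted conjugate of $x'\tau_{\mu'}$ is $\le w\tau_\mu$; since twisting by $\widetilde{W}$ (that is, $\sigma$-conjugating by a lift of $\tilde w\in W$) does not change the $\sigma$-conjugacy class, the set of classes $[x]$ with $x\in\widetilde W$, $x\le w\tau_\mu$ is cofinal in — in fact equal to — the set of classes of elements lying in $\overline{S_{w,\mu}}$. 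Restricting to \sh representatives and taking maxima gives equality of the two descriptions.

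The main obstacle I expect is the second identification: one must be careful that every $x\in\widetilde W$ with $x\le w\tau_\mu$ actually lies in $\overline{S_{w,\mu}}$ as a point of $LG$ (not merely that its stratum $S_{\tr(x)}$ does), and that conversely every class appearing in $\overline{S_{w,\mu}}$ is realized by such an $x$. The first direction needs that $x\in\widetilde W\subseteq G(L)$ has truncation $\tr(x)$ with $x\in S_{\tr(x)}$, combined with Theorem \ref{thmclos} in the case $\tilde w=1$; the second direction needs that for any \sh $y\in\overline{S_{w,\mu}}$ one can, after a harmless $\sigma$-conjugation, assume $y\le w\tau_\mu$ directly, again via Theorem \ref{thmclos}. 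Here one should check that $y$ \sh already implies $y\in\widetilde W$ of the required form (length $0$ in some $\widetilde W_M$), so that $\tr(y)$ is literally $(y\bmod\ \text{the relevant data},\ \mu_y)$ with $y\tau_{\mu_y}$ conjugate to $y$; this is where the definitions of \sh and of $\tr$ in Theorem \ref{propdef}(1) must be matched up carefully. Once this bookkeeping is in place, combining Theorems \ref{thmmain} and \ref{thmclos} with the maximality of the generic class yields the corollary.
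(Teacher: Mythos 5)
The first assertion is handled correctly: irreducibility of $S_{w,\mu}$ plus Grothendieck specialization gives a well-defined maximal class, and Theorem \ref{thmmain} shows it is realized by a \sh element in $\overline{S_{w,\mu}}$. This matches the paper's reasoning in spirit.

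The argument for the second assertion has a genuine gap, and it is exactly the one you flag as the ``main obstacle'' without resolving. You want to show that the generic class $[b]$ is realized by some $z\in\widetilde W$ with $z\le w\tau_\mu$, starting from the \bsh element $x\in\overline{S_{w,\mu}}$ furnished by Theorem \ref{thmmain}. Via Theorem \ref{thmclos} you obtain some $\tilde w$ with $z:=\tilde w\,w''\tau_{\mu''}\,\sigma(\tilde w)^{-1}\le w\tau_\mu$, where $(w'',\mu'')=\tr(x)$; then $[z]=[w''\tau_{\mu''}]$. But to conclude $[z]=[b]$ you would need $[x]=[w''\tau_{\mu''}]$, i.e.\ that a \sh element is $\sigma$-conjugate to the ``standard representative'' $w''\tau_{\mu''}$ of its own truncation type. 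That is not a consequence of the definitions: $\tr(x)=(w'',\mu'')$ only says $x$ is $K$-$\sigma$-conjugate to \emph{some} element of $K_1w''\tau_{\mu''}K_1$, and the class of $w''\tau_{\mu''}$ could a priori differ from $[x]$; the paper even notes (after Theorem \ref{thmexfundalc}) that neither \bsh elements nor the representatives $w\tau_\mu$ are in general $P$-fundamental, so one cannot invoke the fundamental-alcove machinery to force this. Relatedly, the claimed set equality between $\{[x]:x\in\widetilde W,\ x\le w\tau_\mu\}$ and the set of classes of points of $\overline{S_{w,\mu}}$ is only an inclusion $\subseteq$; the reverse inclusion fails without further input.

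The paper closes this gap by a different route, which is strictly needed here. It first proves Proposition \ref{propmain}, which is strictly stronger than Theorem \ref{thmmain}: for $b\in IxI$ it produces a \bsh element $x_b$ and $w'\in{}^{M}W$ with $w'^{-1}x_b\sigma(w')\le x$ \emph{directly} in the Bruhat order (not merely $x_b\in\overline{S_{w,\mu}}$). Corollary \ref{kormain} then characterizes the generic class of $IxI$ as the maximum of $\{[y]:y\le x\}$, and Remark \ref{remit} identifies the generic class of $S_{w,\mu}$ with that of $Iw\tau_\mu I$ via Theorem \ref{propdef}(2). Combined, these give the second assertion without ever needing the unverifiable claim $[x]=[\,\tr(x)\,]$. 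To repair your proof you would have to invoke Proposition \ref{propmain} (or reprove its Bruhat-order content) rather than rely on Theorem \ref{thmmain} plus Theorem \ref{thmclos} alone.
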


\subsection{Comparison between equal and mixed characteristic}

In Section \ref{secfundalc} we prove the following theorem that allows to translate results between the function field case and the arithmetic case without having to repeat proofs. It uses that the set $B(G)$ of $\sigma$-conjugagcy classes of elements of $G(k((t)))$ can be canonically identified with that for $G(W(k)[1/p])$ using the invariants $\nu$ and $\kappa$ (Remark \ref{newremark'}).
\begin{thm}\label{corsuperset}
Let $(w,\mu)\in\Tee\subseteq W\times X_*(T)$. Then a $\sigma$-conjugacy class in $LG(k)$ contains an element of truncation type $(w,\mu)$ if and only if the corresponding $\sigma$-conjugacy class in $G(W(k)[1/p])$ contains an element of truncation type $(w,\mu)$.
\end{thm}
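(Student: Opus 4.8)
The plan is to reduce the statement to a purely combinatorial assertion about the extended affine Weyl group $\widetilde{W}$ and the Bruhat order, which does not see the choice of $L$. The key point is Theorem \ref{thmmain} together with Corollary \ref{corgennp}: the set of $\sigma$-conjugacy classes meeting the truncation stratum of type $(w,\mu)$ is controlled by those $[x]$ with $x\in\widetilde{W}$ and $x\leq w\tau_\mu$ in the Bruhat order. I would first observe that whether a given $[b]\in B(G)$ contains an element of truncation type $(w,\mu)$ can be rephrased as: does the $\sigma$-conjugacy class $[b]$ intersect $\overline{S_{w,\mu}}$, and is $(w,\mu)$ itself ``reachable'' inside that closure? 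Concretely, I would argue that $[b]$ contains an element of truncation type $(w,\mu)$ if and only if $[b]$ contains a \sh element $x$ with $x\leq w\tau_\mu$ in the Bruhat order \emph{and} $(w,\mu)$ lies in $\overline{S_{\tr(x)}}$ with the extra property that the generic class of $S_{w,\mu}$ dominates $[b]$; the precise equivalence is what Theorems \ref{propdef}, \ref{thmclos} and \ref{thmmain} are designed to give. Since \sh elements are elements of $\widetilde{W}$ and the Bruhat order on $\widetilde W$ is intrinsic, the resulting criterion refers only to $W$, $X_*(T)$, $\sigma$ acting on these, and $B(G)$.

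Second, I would invoke Remark \ref{newremark'}, which identifies $B(G)$ for $G(k((t)))$ with $B(G)$ for $G(W(k)[1/p])$ via the pair of invariants $(\nu,\kappa)$ and notes these invariants lie in groups independent of $L$. Under this identification, the notions of \bsh and \sh element, being defined by the conditions ``length $0$ in $\widetilde{W}_M$, $M$-dominant Newton point $\nu_b$, and $\kappa_G(x)=\kappa_G(b)$,'' transport verbatim: the torus $T$, its cocharacter lattice, the Weyl groups $W$ and $W_M$, the Frobenius action $\sigma$, and the maps $\nu$ and $\kappa_G$ are literally the same combinatorial data in the two settings. Hence the combinatorial criterion from the first step produces the same answer for $G(k((t)))$ and for $G(W(k)[1/p])$.

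The main obstacle is that Theorem \ref{thmmain} and Corollary \ref{corgennp} as stated are phrased for $b\in G(k((t)))$ and use the scheme $S_{w,\mu}\subseteq LG$, which only exists in equal characteristic. So the real work is to extract from them a characterization of the set
\[
\{[b]\in B(G)\mid [b]\cap K_1 w\tau_\mu K_1 \neq\emptyset\}
\]
that makes no reference to the loop group: I expect this set to be exactly $\{[x]\mid x\in\widetilde W,\ x\leq w\tau_\mu\}$, and proving this equality (the inclusion $\supseteq$ via Theorem \ref{propdef}(2) lifting Iwahori-double-coset elements, the inclusion $\subseteq$ via Theorem \ref{thmmain} and the closure description in Theorem \ref{thmclos}) is where the content lies. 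Once that combinatorial description is established, transporting it across the identification of Remark \ref{newremark'} is formal, because the statement ``$[x]\leq [$class of $w\tau_\mu]$ for some $x\leq w\tau_\mu$'' involves only $\widetilde W$, the Bruhat order, $\sigma$, and the invariants $\nu,\kappa$. I would close by noting that the argument is symmetric in the two cases, so the ``if and only if'' follows immediately.
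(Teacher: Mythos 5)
Your high-level plan is right: find a purely combinatorial criterion in $\widetilde W$ for whether a $\sigma$-conjugacy class $[b]$ meets $K_1 w\tau_\mu K_1$, observe that it is independent of the choice of $L$ via Remark \ref{newremark'}, and conclude. You also correctly identify Theorem \ref{propdef}(2) as the tool that converts ``truncation type $(w,\mu)$'' into ``meets $Iw\tau_\mu I$.'' However, the specific combinatorial criterion you propose to extract from Theorem \ref{thmmain} and Corollary \ref{corgennp} --- that $\{[b]\mid [b]\cap K_1 w\tau_\mu K_1\neq\emptyset\}$ equals $\{[x]\mid x\in\widetilde W,\ x\leq w\tau_\mu\}$ --- is false, and this is a genuine gap, not just a deferral of detail. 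Already for $G=GL_2$ and $\mu=(1,0)$: with $w=s$ one has $w\tau_\mu=\epsilon^{(1,0)}$, and $\tau_\mu\leq w\tau_\mu$, so the basic (supersingular) class $[\tau_\mu]$ lies in the right-hand set; yet every element of $K_1\epsilon^{(1,0)}K_1$ has unit trace modulo $p$ and hence ordinary Newton polygon, so $[\tau_\mu]$ does not meet $K_1\epsilon^{(1,0)}K_1$. Theorem \ref{thmmain} is a \emph{necessary} condition and Corollary \ref{corgennp} only identifies the \emph{generic} class in $S_{w,\mu}$; neither says which smaller classes actually occur, and in general that set is not determined by the Bruhat order alone.

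The paper's proof is in fact much shorter and routes around this issue entirely: it combines Theorem \ref{propdef}(2) with Proposition \ref{lemsuperset}(2), which says that for any fixed $x\in\widetilde W$, the $\sigma$-conjugacy class $[b]$ meets $IxI$ in $G(W(k)[1/p])$ if and only if the corresponding class meets $IxI$ in $G(k((t)))$. That proposition rests on the $P$-fundamental alcove theory of Section \ref{secfundalc}: every $[b]$ contains a $P$-fundamental alcove $x_b\in\widetilde W$ (Theorem \ref{thmexfundalc}), and then Proposition \ref{lemsuperset}(1) gives the exact combinatorial criterion
\[
\{x\in\widetilde W\mid IxI\cap[b]\neq\emptyset\}=\{x\in\widetilde W\mid x\in Iy^{-1}Ix_bI\sigma(y)I\text{ for some }y\in\widetilde W\},
\]
which is manifestly independent of $L$ because the decomposition of a product of Iwahori double cosets into Iwahori double cosets is governed by the affine Hecke algebra relations in $\widetilde W$. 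This is the ``if and only if'' combinatorial description your argument needs but does not have; it is qualitatively different from (and finer than) a Bruhat-order condition. So the missing ingredient is precisely the fundamental-alcove apparatus, and without it the reduction to combinatorics does not go through.
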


Using this comparison and Theorem \ref{thmclos} we obtain the following analog of Theorem \ref{thmmain} in the arithmetic context.
\begin{thm}
Let $(w,\mu)\in\Tee$ and let $b\in G(W(k)[1/p])$ with $\tr(b)=(w,\mu)$. Then there is a \bsh element $x$ satisfying the following condition. Let $\tr(x)=(w',\mu')$. Then there is a $\tilde{w}\in W$ with $\tilde{w}w'\tau_{\mu'}\sigma(\tilde{w})^{-1}\leq w\tau_{\mu}$.
\end{thm}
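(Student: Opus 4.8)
The plan is to deduce this arithmetic statement directly from its function field counterpart, Theorem~\ref{thmmain}, via the comparison Theorem~\ref{corsuperset}. Given $b\in G(W(k)[1/p])$ with $\tr(b)=(w,\mu)$, let $[b]\in B(G)$ be its $\sigma$-conjugacy class. Under the canonical identification of $\sigma$-conjugacy classes in $G(W(k)[1/p])$ with those in $G(k((t)))$ (Remark~\ref{newremark'}), the class $[b]$ corresponds to a class $[b_0]\in B(G)$ in the equicharacteristic setting. By Theorem~\ref{corsuperset} applied to $(w,\mu)$, since $[b]$ contains an element of truncation type $(w,\mu)$ in the arithmetic setting, the corresponding class $[b_0]$ contains an element of truncation type $(w,\mu)$ in $LG(k)$; that is, there is $b_0\in G(k((t)))$ with $\tr(b_0)=(w,\mu)$.

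Next I would apply Theorem~\ref{thmmain} to $b_0$: this produces a $[b_0]$-short element $x\in\overline{S_{w,\mu}}$. By the characterization of the closure relations given in Theorem~\ref{thmclos} (applied to the strata $S_{w',\mu'}$ and $S_{w,\mu}$, where $(w',\mu')=\tr(x)$), the condition $x\in\overline{S_{w,\mu}}$, i.e.\ $S_{w',\mu'}\subseteq\overline{S_{w,\mu}}$, is equivalent to the existence of some $\tilde{w}\in W$ with $\tilde{w}w'\tau_{\mu'}\sigma(\tilde{w})^{-1}\leq w\tau_{\mu}$ in the Bruhat order. This is precisely the asserted condition. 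The final point to check is that $x$ is genuinely $[b]$-short in the arithmetic sense, not merely $[b_0]$-short in the function field sense: but $[b]$-shortness is defined purely in terms of the invariants $\nu$ and $\kappa$ and of the combinatorics of $\widetilde{W}_M$ (where $M$ is the centralizer of the dominant Newton point), all of which are insensitive to the choice of $L$ by Remark~\ref{newremark'}. Hence a $[b_0]$-short element is automatically $[b]$-short, and the statement follows.

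I expect the main (and essentially only) subtlety to be bookkeeping around the identification of $B(G)$ in the two settings and the corresponding notions of short elements, i.e.\ verifying that ``$[b]$-short'' transports correctly under the identification of Remark~\ref{newremark'}. Since short elements live in $\widetilde{W}$, which is the same group in both cases, and the defining conditions ($\ell(x)=0$ in $\widetilde{W}_M$, $M$-dominant Newton point $\nu_b$, $\kappa_G(x)=\kappa_G(b)$) reference only $L$-independent data, this transport is immediate; no new geometric input beyond Theorems~\ref{thmmain}, \ref{thmclos} and~\ref{corsuperset} is needed.
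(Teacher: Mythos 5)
Your proposal is correct and follows exactly the route the paper indicates (the paper itself only states, just before the theorem, that it follows "using this comparison and Theorem \ref{thmclos}"). You have filled in the same steps the paper leaves implicit: transport via Theorem \ref{corsuperset}, invoke Theorem \ref{thmmain}, convert membership in $\overline{S_{w,\mu}}$ into the combinatorial condition via Theorem \ref{thmclos}, and observe that short elements transport across the identification of Remark \ref{newremark'} because their defining data ($\ell$, $\nu$, $\kappa$, $\widetilde W_M$) are $L$-independent.
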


\subsection{Comparison with Ekedahl-Oort strata}\label{compeo}
Let $X$ be a $p$-divisible group over an algebraically closed field $k$ of characteristic $p$. Let $(\mathbf M,F)$ be its Dieudonn\'e module and write $F=b\sigma$ with $b\in GL_h(W(k)[1/p])$ with respect to some trivialization of $\mathbf M$. As $p\mathbf M\subseteq F(\mathbf M)\subseteq \mathbf M$, we have $b\in Kp^{\mu}K$ for some minuscule $\mu\in X_*(\T)$. 

In \cite{EO} Oort shows that one obtains a discrete invariant of $X$ (the so-called Ekedahl-Oort invariant) by considering the isomorphism class of the $p$-torsion points $X[p]$, or equivalently by studying the reduction modulo $p$ of the Dieudonn\'e module $\mathbf M$ together with the two maps induced by $F:\mathbf M\rightarrow \mathbf M$ and $V=pF^{-1}:\mathbf M\rightarrow \mathbf M$. Reformulating this invariant in terms of the element $b$, it corresponds to considering the $K_1$-double coset. In other words, we can apply our theory in the special case $G=GL_h$ and $\mu$ minuscule for $\mathcal{O}=W(k)$ to study the Ekedahl-Oort invariant of $p$-divisible groups. Likewise, truncations of level 1 for other groups yield classifications of Ekedahl-Oort invariants of $p$-divisible groups with extra structure by a polarization or endomorphisms. 

In Section \ref{secpolpdiv} we further study the relation between truncation strata in loop groups and Ekedahl-Oort strata in PEL Shimura varieties. Using Theorem \ref{corsuperset} we obtain a direct comparison for non-emptiness of intersections between truncation strata and $\sigma$-conjugacy classes on the one hand and between Ekedahl-Oort strata and isogeny classes of $p$-divisible groups on the other hand. It allows to deduce a non-emptiness criterion for Shimura varieties which is analogous to Theorem \ref{thmmain} and generalizes a result of Harashita which proved a conjecture by Oort.\\

\noindent{\it Acknowledgement.} I am grateful to Ulrich G\"ortz, Xuhua He, Torsten Wedhorn, and Chia-Fu Yu for helpful discussions. I thank Torsten Wedhorn for pointing out an error in a preliminary version of this article. I want to thank the referee for careful proofreading and several detailed remarks on how to improve the paper. Part of this paper was written during a stay at the Academia Sinica in Taipeh which was supported by the Academia Sinica and by the Hausdorff Research Center, Bonn. I thank the Academia Sinica for its hospitality, support and excellent working conditions. This work was partially supported by the SFB/TR45 ``Periods, Moduli Spaces and Arithmetic of Algebraic Varieties'' of the DFG.

\section{Reductive groups over local rings}\label{secnot}

In this section we summarize some facts about reductive groups over local rings that are used frequently in the paper.

\subsection{}\label{secqsplit} Let $G$ be a connected reductive group over $\mathcal{O}_F$. Then $G$ is quasi-split and split over an unramified extension of $\mathcal{O}_F$. Indeed, let $k_F$ be the residue field of $\mathcal{O}_F$. Then $G_{k_F}$ is quasi-split and split over a finite extension of $k_F$. Furthermore a Borel subgroup over $k_F$ and a split maximal torus over a finite extension of $k_F$ can be lifted to a Borel subgroup and a split maximal torus over $\mathcal{O}_F$ resp.~over the corresponding unramified extension of $\mathcal{O}_F$, compare \cite{VW}, A.4.

\subsection{}\label{sec22} The extended affine Weyl group $\widetilde{W}$ has a decomposition $\widetilde W\cong \Omega\ltimes W_{\rm aff}$. Here $\Omega$ is the subset of elements of $\widetilde{W}$ which fix the chosen Iwahori subgroup $I$ of $G(L)$. The second factor $W_{\rm aff}$ is the affine Weyl group of $G$. In terms of the decomposition $\widetilde{W}\cong W\ltimes X_*(\T)$ it has the following description. Let $G_{sc}$ be the simply connected cover of $G$ and let $\T_{sc}$ the inverse image of $\T$ in $G_{sc}$. Then $W_{\rm aff}\cong W\ltimes X_*(\T_{sc})$ and $\Omega\cong X_*(\T)/X_*(\T_{sc})$. The affine Weyl group of $G$ is an infinite Coxeter group. It is generated by the simple reflections $s_i$ associated with the simple roots of $T$ in $G$ together with the simple affine root. The choice of $I$ also induces an ordering on $\widetilde{W}$, the Bruhat ordering. It is defined as follows. Let $x,y\in\widetilde W$ and let $x=\omega_xx'$ and $y=\omega_yy'$ be their decompositions into elements of $\Omega$ and $W_{\rm aff}$. Then $x\leq y$ if and only if $\omega_x=\omega_y$ and if there are reduced expressions $x'=s_{i_1}\dotsm s_{i_n}$ and $y'=s_{j_1}\dotsm s_{j_m}$ for $x'$ and $y'$ such that $(s_{i_1},\dotsc,s_{i_n})$ is a subsequence of $(s_{j_1},\dotsc,s_{j_m})$.

Recall the morphism $\kappa_G:G(L)\rightarrow \pi_1(G)_{\Gamma}$ where $\pi_1(G)$ is the quotient of $X_*(T)$ by the coroot lattice. It induces a surjection $\kappa_G:\widetilde{W}\cong W\ltimes X_*(\T)\rightarrow \pi_1(G)_{\Gamma}$. The subgroup $W_{\rm aff}$ of $\widetilde{W}$ is in the kernel of $\kappa_G$. On the subgroup $\Omega$ of the extended affine Weyl group, $\kappa_G$ induces the canonical projection $\Omega\cong X_*(\T)/X_*(\T_{sc})\overset{\sim}{\rightarrow}\pi_1(G)\rightarrow \pi_1(G)_{\Gamma}$. 

\subsection{}\label{sec23} We have the following decompositions. More details can for example be found in \cite{42}.

{\it Iwasawa decomposition.} Let $P$ be a parabolic subgroup of $G$. Then $G(L)=P(L)K$.

{\it Bruhat-Tits decomposition.} $G(L)=\coprod_{x\in \widetilde W} IxI.$ In the function field case the double cosets are locally closed subschemes of $LG$. The closure of $IxI$ is equal to the union of all $Ix'I$ where $x'\leq x$ in the Bruhat order.

{\it Cartan decomposition.} $G(L)=\coprod_{\mu\in X_*(T)_{\dom}} K\epsilon^{\mu}K$ where $X_*(T)_{\dom}$ denotes the set of dominant elements of $X_*(T)$ and where $\epsilon^{\mu}$ is defined to be the image of $\epsilon$ under $\mu:\mathbb{G}_m\rightarrow T$. In the function field case the double cosets are locally closed subschemes of $LG$. The closure of $K\epsilon^{\mu}K$ is equal to the union of all $K\epsilon^{\mu'}K$ where $\mu'\preceq\mu$. Here $\mu'\preceq\mu$ if $\mu-\mu'$ is a non-negative integral linear combination of positive coroots.

{\it Iwahori decomposition.} Let $P$ be a standard parabolic subgroup of $G$ and let $N$ be its unipotent radical and $M$ the Levi factor containing $T$. Let $\overline{N}$ be the unipotent radical of the opposite parabolic. Let $I_{M}=I\cap M(L)$ and analogously for $N,\overline{N}$. Then $I=I_NI_MI_{\overline N}$.

\subsection{} A subset of the loop group $LG$ is called bounded if it is contained in a finite union of double cosets $K\epsilon^{\mu}K$. For $n\in \mathbb{N}$ let $K_n=\{g\in K\mid g\equiv 1\pmod{\epsilon^n}\}$. Then a subscheme $S$ of $LG$ is called admissible if there is an $n_S\in \mathbb{N}$ with $SK_{n_S}=S$. Let $S\subseteq LG$ be a bounded and admissible subscheme and let $n_S$ be as above. Let $\mathcal{B}$ be a finite union of double cosets containing $S$. Then $S$ can be studied by considering the image in $\mathcal{B}/K_{n_S}$ which is a scheme of finite type. For example $S$ is called locally closed if the same holds for its image in $\mathcal{B}/K_{n_S}$. The closure of $S$ is defined to be the inverse image under $LG\rightarrow LG/K_{n_S}$ of the closure of $S$ in $LG/K_{n_S}$. The subscheme $S$ is called smooth or irreducible if the same holds for its image in $\mathcal{B}/K_{n_S}$. Note that these notions do not depend on the choice of $\mathcal{B}$ and of $n_S$ provided that they are large enough.

\subsection{} The following lemma is a variant of the Theorem of Lang-Steinberg for the infinite-dimensional group schemes that we want to consider.
\begin{lemma}\label{thmlang}
Let $H\subseteq K$ be a subgroup of $K$. For all $n\in \mathbb{Z}_{\geq 0}$ let $H_n=\{h\in H\mid h\equiv 1\pmod{\epsilon^n}\}.$ We assume that $H/H_n$ and $H_{n-1}/H_n$ are connected linear algebraic groups for all $n$. Let $g\in G(L)$ with $g^{-1}H_ng\subseteq \sigma(H_n)$ for all $n$. Then the morphism $H\rightarrow H$ with $h\mapsto \sigma^{-1}(g^{-1}h^{-1}g)h$ is surjective. 
\end{lemma}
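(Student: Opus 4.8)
The plan is to reduce to the finite-dimensional quotients $H/H_n$, prove surjectivity there by a Lang--Steinberg type argument, and then assemble a global solution from a completeness argument.

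First I would rewrite the map. Put $\phi(h)=\sigma^{-1}(g^{-1}hg)$. For $n=0$ we have $H_0=H$, so the hypothesis reads $g^{-1}Hg\subseteq\sigma(H)$; hence $\phi$ is a well-defined injective group endomorphism of $H$, and the hypothesis for general $n$ gives $\phi(H_n)\subseteq H_n$. The map in the lemma is $h\mapsto\phi(h)^{-1}h$, and since $\phi$ preserves the filtration it descends to maps $f_n\colon H/H_n\to H/H_n$ of abstract groups. As $H=\varprojlim_n H/H_n$ and $\bigcap_n H_n=\{1\}$, it is enough to show that each $f_n$ is surjective with \emph{finite} fibres: then for a given $c\in H$ the non-empty fibres $f_n^{-1}(\bar c)$ form an inverse system of non-empty finite sets, which has a non-empty inverse limit, and any element of it is an $h$ with $f(h)=c$. (Finiteness of the fibres is indispensable here; surjectivity of the $f_n$ alone would not let one take the limit.)

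For a fixed $n$ I would work with the honest Frobenius instead of $\sigma^{-1}$, since the relevant maps are then morphisms of varieties. Unwinding definitions, $f(h)=c$ is equivalent to $h^{-1}g\sigma(h)=g\sigma(c)$. Using $g^{-1}H_ng\subseteq\sigma(H_n)$ one checks that $X_n:=g\sigma(H)/\sigma(H_n)$ --- the image inside $G(L)/K_n$ of $H/H_n$ under $\bar b\mapsto\overline{g\sigma(b)}$ --- is a well-defined irreducible variety of the same dimension as $H/H_n$, and that $H/H_n$ acts on $X_n$ on the right by $\bar h\cdot\bar b=\overline{h^{-1}b\sigma(h)}$, the orbits forming a partition. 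Since $\bar c\mapsto\overline{g\sigma(c)}$ is a surjection onto $X_n$, surjectivity of $f_n$ is equivalent to the orbit of $\overline g$ being all of $X_n$, and finiteness of the fibres of $f_n$ to quasi-finiteness of the orbit map $o_g\colon\bar h\mapsto\overline{h^{-1}g\sigma(h)}$. The crucial input is that $\sigma$ induces the $q$-power Frobenius on $K/K_n=G(\mathcal O/\epsilon^n)$ --- this holds because $G$ is defined over $\mathbb F_q$, so $K/K_n$ and $\sigma$ are --- and therefore has vanishing differential. Consequently, for any point $\bar b$ of $X_n$ the orbit map $o_b\colon\bar h\mapsto\overline{h^{-1}b\sigma(h)}$ has differential $-\operatorname{Ad}(b^{-1})|_{\operatorname{Lie}(H/H_n)}$ at $\bar 1$, which is injective between vector spaces of equal dimension, hence an isomorphism. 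So every orbit map is \'etale at $\bar 1$, hence --- by homogeneity of orbits --- \'etale everywhere; in particular every orbit in $X_n$ is open, and every orbit map is quasi-finite. As $X_n$ is connected, it consists of a single orbit; this gives surjectivity of $f_n$, while quasi-finiteness of $o_g$ gives finiteness of its fibres. The completeness argument of the second paragraph then finishes the proof.

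The main obstacle is bookkeeping rather than a new idea: one may not assume that $H$ is $\sigma$-stable or that $g$ normalises $K$ or $K_n$, so one cannot simply work inside a single algebraic group $K/K_n$ equipped with a Steinberg endomorphism. The care is in checking that $g\sigma(H)/\sigma(H_n)$ is indeed a well-defined irreducible variety carrying the twisted $H/H_n$-action --- this is exactly where $g^{-1}H_ng\subseteq\sigma(H_n)$ enters, e.g.\ to see $\sigma(H)\cap K_n=\sigma(H_n)$ and that $H_n$ acts trivially --- that $\sigma\colon H/H_n\to\sigma(H)/\sigma(H_n)$ is a surjective morphism of varieties, and that orbits of algebraic group actions are locally closed of full dimension, so that ``open at one point'' upgrades to ``open''. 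Once this scaffolding is in place, the vanishing of the differential of $\sigma$ does all the work, exactly as in the classical proof of Lang's theorem.
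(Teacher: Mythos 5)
Your proof is correct, and it takes a genuinely different route from the paper's. Both proofs apply the Lang--Steinberg philosophy to the finite quotients, but they assemble the global solution differently. The paper first produces, for each $n$, \emph{some} solution in $H/H_n$ by Lang--Steinberg, and then turns these into a \emph{compatible} inverse system by an explicit lifting step: given a solution mod $H_n$ and an arbitrary lift $f_{n+1}$, it finds a correction $\psi_{n+1}\in H_n/H_{n+1}$ by applying Lang--Steinberg a second time to the connected group $H_n/H_{n+1}$ with the twisted morphism $\psi\mapsto h^{-1}\sigma^{-1}(g^{-1}\psi^{-1}f_{n+1}^{-1}g)f_{n+1}\psi$; this is precisely where the hypothesis that the graded pieces $H_{n-1}/H_n$ be connected is used. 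You instead prove that each Lang map $f_n$ on $H/H_n$ is surjective \emph{with finite fibres} (re-deriving Lang's theorem from the vanishing differential of Frobenius), and then invoke the compactness principle that an inverse limit of non-empty finite sets is non-empty. So you obtain compatibility of solutions from finiteness of fibres rather than by construction, and your argument never uses connectedness of the graded pieces $H_{n-1}/H_n$, only of $H/H_n$. The price is that you must re-prove Lang in a setting where $H/H_n$ is not necessarily $\sigma$-stable and $g$ does not normalise $K_n$, which is exactly the bookkeeping you flag. One point worth making explicit in both proofs is the tacit assumption that $H$ is closed in $K$, i.e.\ $H\cong\varprojlim H/H_n$: the paper only asserts $h_\infty\in K$ but needs $h_\infty\in H$, and you appeal to the same identification when you pass from the inverse limit of fibres to an actual element of $H$.
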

\begin{proof}
Let $h\in H$ and $n\in \mathbb{N}$. By the Theorem of Lang-Steinberg there is an $h_n\in H/H_n$ with $\sigma^{-1}(g^{-1}h_n^{-1}g)h_n\in hH_n$. We want to show that we can lift $h_n$ to an element $h_{n+1}\in H/H_{n+1}$ with $\sigma^{-1}(g^{-1}h_{n+1}^{-1}g)h_{n+1}\in hH_{n+1}$. Let $f_{n+1}\in H/H_{n+1}$ be an arbitrary lift of $h_n$. We now apply the Theorem of Lang-Steinberg to the morphism $H_{n}/H_{n+1}\rightarrow H_{n}/H_{n+1}$ with $\psi\mapsto h^{-1}\sigma^{-1}(g^{-1}\psi^{-1}f_{n+1}^{-1}g)f_{n+1}\psi$. Note that $H_{n+1}$ is a normal subgroup of $H$ for all $n$, hence this is indeed a well-defined element of $H_{n}/H_{n+1}$. Let $\psi_{n+1}$ be an inverse image of the identity element under this morphism. Then $h_{n+1}=f_{n+1}\psi_{n+1}$ is as claimed. Using induction and passing to the limit we obtain an element $h_{\infty}\in K$ with $\sigma^{-1}(g^{-1}h_{\infty}^{-1}g)h_{\infty}=h$. 
\end{proof}

\subsection{}\label{sec25} Let $P$ be a standard parabolic subgroup of $G$, i.e.~$B\subseteq P$. We denote by $M$ the Levi factor containing $T$ and by $N$ its unipotent radical. Let $\mu\in X_*(T)$. Let $\alpha$ be a root and $U_{\alpha}$ the corresponding root subgroup. Then $\epsilon^{\mu}U_{\alpha}(x)\epsilon^{-\mu}=U_{\alpha}(\epsilon^{\langle \alpha,\mu\rangle}x)$. In particular, we have $\epsilon^{\mu}N(\mathcal{O})\epsilon^{-\mu}\subseteq N(\mathcal{O})\cap K_1$ if $\langle \alpha,\mu\rangle>0$ for all roots $\alpha$ of $T$ in $N$. This is for example the case if $\mu$ is dominant and $M$ contains the centralizer of $\mu$.

\section{Truncations of level $1$}\label{sec2}
The goal of this section is to prove Theorem \ref{propdef}, in particular we allow both the function field case and the case of mixed characteristic. The proof follows a strategy by B\'edard, \cite{bedard}.

\begin{proof}[Proof of Theorem \ref{propdef}]
Let $b\in G(L)$. By the Cartan decomposition there is a unique dominant $\mu\in X_*(\T)$ with $b\in K\epsilon^{\mu}K$ and $b$ is $K$-$\sigma$-conjugate to an element of the form $b_0x_{\mu}\epsilon^{\mu}=b_0\tau_{\mu}$ with $b_0\in K$. 

To show (1) we have to prove that there is a unique $w\in {}^{\mu}W$ such that $b_0\tau_{\mu}$ is $K$-$\sigma$-conjugate to an element of $K_1w\tau_{\mu}K_1$. We use induction on $i$ to show that there exist a sequence of elements $u_i\in W$, two sequences of standard Levi subgroups $M_i, M_i'$ of $G$, and a sequence of elements $b_i\in M_i'(\mathcal{O})$ with the following properties:
\begin{enumerate} 
\item[a)] $M_0=M_0'=G$,\\ $M_1'=x_{\mu}M_{\mu}x_{\mu}^{-1}$ and $M_1=\sigma^{-1}(M_{\mu})$,\\ $M_i=M_{i-1}'\cap u_{i-1}^{-1}\sigma^{-1}(M_{\mu})u_{i-1}$, and\\ $M_i'=M_1'\cap   x_{\mu}\sigma(u_{i-1}M'_{i-1}u_{i-1}^{-1})x_{\mu}^{-1}=x_{\mu}\sigma(u_{i-1}M_iu_{i-1}^{-1})x_{\mu}^{-1}$ for $i>1$.
\item[b)] $u_0=1$ and \\$u_i=u_{i-1}\delta_i$ for $i>0$ for some $\delta_i\in W_{M'_{i-1}}$ which is the shortest representative of $W_{M_i}\delta_iW_{M'_{i}}$ and\\
$u_i$ is the shortest representative of $W_{M_1}u_iW_{M'_{i}}$.
\item[c)] $b$ is $K$-$\sigma$-conjugate to an element of $K_1u_ib_i\tau_{\mu}K_1$.
\item[d)] $u_i W_{M'_i}\subseteq W$ is uniquely determined by the $K$-$\sigma$-conjugacy class of $b$ in $K_1\backslash G(L)/K_1$.
\item[e)] $u_ib'_i\tau_{\mu}$ with $b'_i\in M_i'(\mathcal{O})$ is in the $K$-$\sigma$-conjugacy class of $b$ in $K_1\backslash G(L)/K_1$ if and only if the images in $G(k)=G(\mathcal{O})/K_1$ of the two elements $b_i,b_i'$ are in the same $M_{i+1}(k)$-orbit in $U_{P_{i+1}}(k)\backslash M_i'(k)/U_{\overline{P}_{i+1}'}(k)$ under the action 
\begin{eqnarray*}
M_{i+1}(\mathcal{O})\times U_{P_{i+1}}(k)\backslash M_i'(k)/U_{\overline{P}_{i+1}'}(k)&\rightarrow &U_{P_{i+1}}(k)\backslash M_i'(k)/U_{\overline{P}_{i+1}'}(k)\\
(g,m)&\mapsto &g^{-1}mx_{\mu}\sigma(u_igu_i^{-1})x_{\mu}^{-1}.
\end{eqnarray*}
Here\\ $P_0=P'_0=G$,\\ $P_1'=x_{\mu}P_{\mu}x_{\mu}^{-1}$ and $P_1=\sigma^{-1}(P_{\mu})$,\\ $P_{i}=M'_{i-1}\cap u_{i-1}^{-1}\sigma^{-1}(P_{\mu})u_{i-1}$ and\\ $P'_{i}=M_1'\cap x_{\mu}\sigma(u_{i-1}P'_{i-1}u_{i-1}^{-1})x_{\mu}^{-1}$ for $i>1$, are parabolic subgroups of $M'_{i-1}$.\\ Furthermore, $U_P$ denotes the unipotent radical of a linear algebraic group $P$.
\end{enumerate}
Before we begin the proof, let us show that some of the conditions automatically follow from the others. We first check inductively that a) and b) imply that $M_i'\supseteq P'_{i+1}$ and that $M'_{i+1}$ is the Levi subgroup of $P'_{i+1}$ containing $T$, for all $i$.  For $i=0$ this is obvious. The second statement is also clear by induction. From b) and the induction hypothesis $P_i'\subseteq M_{i-1}'$ we see that $P_{i+1}'=M_1'\cap x_{\mu}\sigma(u_iP_i'u_i^{-1})x_{\mu}^{-1}$ is contained in $M_1'\cap x_{\mu}\sigma(u_{i-1}M_{i-1}'u_{i-1}^{-1})x_{\mu}^{-1}=M_i'$.

Results of B\'edard \cite{bedard}, or Lusztig (\cite{Lusztig}, (a)--(d) in the proof of Proposition 2.4) show that the last condition in b) follows automatically using induction, using the condition on $\delta_i$ and the definition of $M_i$ and $M_i'$.

Note that if $g\in M_{i+1}(\mathcal{O})$ then $x_{\mu}\sigma(u_igu_i^{-1})x_{\mu}^{-1}\in M_{i+1}'(\mathcal{O})$, so the action in e) is well-defined.

{\it Claim. }Conditions (a) and (b) above imply that the $M_i$ and $M_i'$ are standard Levi subgroups. 

We show this claim using induction. The Levi subgroups $M_{\mu}$ and $\sigma^{-1}(M_{\mu})$ are standard as $\mu$ is dominant and as $B$ is invariant under $\sigma$. Now we use the following fact: If $M$ is a standard Levi, if $\alpha$ is a simple root which lies in $M$ and if $x\in W^M$ then $x(\alpha)$ is again positive. If each such $x(\alpha)$ is again simple then $xMx^{-1}$ is again a standard Levi subgroup. For $x=x_{\mu}=w_0w_{0,\mu}$ this implies that $x_{\mu}M_{\mu}x_{\mu}^{-1}$ is standard. For the induction step we show that if $M,M'$ are standard and $x$ is the shortest representative of $W_{M'}xW_M$, then $M'\cap xMx^{-1}$ is also standard. By the above fact it is enough to show that for every simple root $\alpha$ in $M$ such that $x(\alpha)$ is a root in $M'$, this root is also simple. We have $1+\ell(x)=\ell(xs_{\alpha})=\ell(xs_{\alpha}x^{-1})+\ell(x)$ where the last equality uses that $xs_{\alpha}x^{-1}\in W_{M'}$, compare \cite{DDPW}, Lemma 4.17. Hence $\ell(xs_{\alpha}x^{-1})=1$, and $x(\alpha)$ is simple. We apply this first to $u_{i-1}^{-1}\in W^{M_1}\cap {}^{M_{i-1}'}W$, $M_1$ and $M_{i-1}'$ to obtain inductively that $M_i=M_{i-1}'\cap u_{i-1}^{-1}M_1u_{i-1}$ is standard. For $M_i'$ the properties of $\sigma$ and $x_{\mu}$ already used above imply that it is enough to show that $M_1\cap u_{i-1}M'_{i-1}u_{i-1}^{-1}$ is standard. This follows in the same way as before. 

We now carry out the induction to show a)--e). For $i=0$, d) is obvious and c) has been shown above. For e) Section \ref{sec25} implies that on $K_1$-double cosets, the effect of $\sigma$-conjugation of $b_0\tau_{\mu}$ by $g\in U_{P_1}(\mathcal{O})$ is the same as left multiplication of $b_0$ by $g$. Similarly one sees $\epsilon^{-\mu}U_{\overline{P}_{\mu}}(\mathcal{O})\epsilon^{\mu}\subset K_1$. Hence right multiplication of $b_0$ by an element of $U_{\overline{P}_{1}'}(\mathcal{O})$ does not change the class $b_0\tau_{\mu}K_1$.  The effect of the action of $M_1(\mathcal{O})$ on $b_0$ corresponds to $\sigma$-conjugation of $b_0\tau_{\mu}$ and thus it does not change the $K$-$\sigma$-conjugacy class of $b_0\tau_{\mu}$. For the other direction, if an element $g\in K$ conjugates $b_0\tau_{\mu}$ into $M_0'(\mathcal{O})\tau_{\mu}=K\epsilon^{\mu}$, then $\sigma(g)\in K\cap \epsilon^{-\mu}K\epsilon^{\mu}$. In particular $g$ is contained in the parahoric subgroup of $K$ of elements whose image in $G(k)$ is in $P_1(k)$. Using the analog of the Iwahori decomposition for this subgroup and the fact that $\sigma(g)\in \epsilon^{-\mu}K\epsilon^{\mu}$ we obtain a decomposition of $g$ into factors in $U_{P_1}(\mathcal{O})$, $M_1(\mathcal{O})$ and $\epsilon^{-\sigma^{-1}(\mu)}U_{\overline{P}_1}(\mathcal{O})\epsilon^{\sigma^{-1}(\mu)}=\sigma^{-1}(\epsilon^{-\mu}U_{\overline{P}_{\mu}}(\mathcal{O})\epsilon^{\mu})\subset K_1$. This shows e) and finishes the argument for $i=0$.

We have to show that a)--e) for some $i$ imply the same properties for $i+1$. Let $b_i$ be as in c). We decompose $b_i$ using the Bruhat decomposition to obtain that $b_i\in K_1P_{i+1}(\mathcal{O})\delta_{i+1}\overline{P}'_{i+1}(\mathcal{O})$ for some $\delta_{i+1}$ as in b) and with $P_{i+1}$ and ${P}_{i+1}'$ as in e). We may assume that the factor in $K_1$ is trivial. By e) we may further assume that the factors in $P_{i+1}(\mathcal{O})$ and $\overline{P}_{i+1}'(\mathcal{O})$ lie in $M_{i+1}(\mathcal{O})$ and $M_{i+1}'(\mathcal{O})$, respectively. We obtain a decomposition $u_ib_i\tau_{\mu}\in \bigl(u_iM_{i+1}(\mathcal{O})u_i^{-1}\bigr)u_i\delta_{i+1}M'_{i+1}(\mathcal{O})\tau_{\mu}$. After $\sigma$-conjugating $u_ib_i\tau_{\mu}$ with the factor in $u_iM_{i+1}(\mathcal{O})u_i^{-1}$ and using that $\sigma(u_iM_{i+1}(\mathcal{O})u_i^{-1})=x_{\mu}^{-1}M'_{i+1}x_{\mu}\subseteq x_{\mu}^{-1}M'_1x_{\mu}= M_{\mu}$ we obtain c) for $i+1$. Property d) follows from the uniqueness of the Bruhat decomposition together with d) and e) for $i$. It remains to show e). If we replace $b_{i+1}$ by $\beta b_{i+1}$ where $\beta$ is an element of $U_{P_{i+2}}(\mathcal{O})$ this has the effect that the product $u_{i+1}b_{i+1}\tau_{\mu}$ is multiplied on the left with an element $\delta(\beta)$ of $u_{i+1}U_{P_{i+2}}(\mathcal{O})u_{i+1}^{-1}=U_{P_1}(\mathcal{O})\cap u_{i+1}M_{i+1}'(\mathcal{O})u_{i+1}^{-1}$. This does not change the $K$-$\sigma$-conjugacy class in $K_1\backslash G(L)/K_1$. Indeed, by Section \ref{sec25} right multiplication by elements of $\sigma(U_{P_1}(\mathcal{O})\cap u_{i+1}M_{i+1}'(\mathcal{O})u_{i+1}^{-1})$ does not change the coset $K_1u_{i+1}b_{i+1}\tau_{\mu}$, and hence $K_1\delta(\beta)u_{i+1}b_{i+1}\tau_{\mu}=K_1\delta(\beta)u_{i+1}b_{i+1}\tau_{\mu}\sigma(\delta(\beta)^{-1})$. Now we want to show that replacing $b_{i+1}$ by $b_{i+1}\beta$ with $\beta \in U_{\overline{P}'_{i+2}}(\mathcal{O})$ also does not change the $K$-$\sigma$-conjugacy class of $u_{i+1}b_{i+1}\tau_{\mu}$. As $M_{i+1}'\subseteq x_{\mu}M_{\mu}x_{\mu}^{-1}$ this replacement has the effect that $u_{i+1}b_{i+1}\tau_{\mu}=u_{i+1}b_{i+1}x_{\mu}\epsilon^{\mu}$ is multiplied on the right with $\tau_{\mu}^{-1}\beta\tau_{\mu}=x_{\mu}^{-1}\beta x_{\mu}$. We $\sigma$-conjugate with the element $\sigma^{-1}(x_{\mu}^{-1}\beta x_{\mu})^{-1}\in \sigma^{-1}(x_{\mu}^{-1}U_{\overline{P}'_{i+2}}(\mathcal{O}) x_{\mu})=\sigma^{-1}(M_{\mu}(\mathcal{O}))\cap u_{i+1}U_{\overline{P}'_{i+1}}(\mathcal{O})u_{i+1}^{-1}$ (which is in particular in $K$). Then we obtain an element of $u_{i+1}U_{\overline{P}_{i+1}'}(\mathcal{O})b_{i+1}x_{\mu}\epsilon^{\mu}$. As $b_{i+1}\in M'_{i+1}(\mathcal{O})$, the element lies in $u_{i+1}b_{i+1}U_{\overline{P}_{i+1}'}(\mathcal{O})x_{\mu}\epsilon^{\mu}$. Using induction we obtain that this is contained in the same class as $u_{i+1}b_{i+1}x_{\mu}\epsilon^{\mu}$. Finally, the effect of the action of $M_{i+2}(\mathcal{O})$ on $b_{i+1}$ corresponds to $\sigma$-conjugation of $u_{i+1}b_{i+1}\tau_{\mu}$ by elements of $u_{i+1}M_{i+2}(\mathcal{O})u_{i+1}^{-1}$, and thus it leaves the $K$-$\sigma$-conjugacy class of $u_{i+1}b_{i+1}\tau_{\mu}$ stable.
It remains to show the other direction of e). So assume that $u_{i+1}b'_{i+1}\tau_{\mu}=u_{i}(\delta_{{i+1}}b'_{i+1})\tau_{\mu}$ with $b'_{i+1}\in M_{i+1}'(\mathcal{O})$ is in the $K$-$\sigma$-conjugacy class of $u_{i+1}b_{i+1}\tau_{\mu}=u_{i}(\delta_{{i+1}}b_{i+1})\tau_{\mu}$ in $K_1\backslash G(L)/K_1$. Using induction for $\delta_{{i+1}}b'_{i+1}, \delta_{{i+1}}b_{i+1}\in M_i'(\mathcal{O})$ we obtain elements $g\in M_{{i+1}}(\mathcal{O})$, $a\in U_{P_{i+1}}(\mathcal{O})$ and $a'\in U_{\overline{P}_{i+1}'}(\mathcal{O})$ with 
$$g^{-1}\delta_{i+1}b_{i+1}x_{\mu}\sigma(u_{i}gu_{i}^{-1})x_{\mu}^{-1}=a\delta_{i+1}b_{i+1}'a'.$$
Let $h=\delta_{i+1}^{-1}g\delta_{i+1}$ and $\tilde a=b_{i+1}'a'(b_{i+1}')^{-1}$. Then $\tilde a \in  U_{\overline{P}_{i+1}'}$, and
\begin{equation}\label{eqnew1}h^{-1}b_{i+1}x_{\mu}\sigma(u_{{i+1}}hu_{{i+1}}^{-1})x_{\mu}^{-1}=\delta_{i+1}^{-1}a\delta_{i+1}\tilde{a}b_{i+1}'.\end{equation}
 Notice that if $P,Q$ are connected linear algebraic subgroups containing $T$ such that $P$ is parabolic and if we denote $P=MU_P$ the decomposition into the Levi subgroup containing $T$ and the unipotent radical, then 
\begin{equation}\label{star}P\cap Q=(U_P\cap Q)(M\cap Q).\end{equation} Indeed, both sides contain $T$ and the same root subgroups, and are generated by these subgroups.

We have $b_{i+1},b_{i+1}', x_{\mu}\sigma(u_{{i+1}}hu_{{i+1}}^{-1})x_{\mu}^{-1}\in M_{i+1}'(\mathcal{O})$. Thus (\ref{eqnew1}) implies that 
\begin{equation}\label{eqnew25}
h\delta_{i+1}^{-1}a\delta_{i+1}\tilde{a}\in M_{i+1}'(\mathcal{O}).
\end{equation}
Hence  $h\delta_{i+1}^{-1}a\delta_{i+1}\in \overline P_{i+1}'(\mathcal{O}),$ and (as it is equal to $\delta_{i+1}^{-1}ga\delta_{i+1}$), it is also contained in $(\delta_{i+1}^{-1}P_{i+1}\delta_{i+1})(\mathcal{O})$.
Using this and (\ref{star}) we obtain that 
\begin{eqnarray*}
\tilde a&\in& (U_{\overline{P}_{i+1}'}\cap (M_{i+1}'\cdot(\overline{P}_{i+1}'\cap\delta_{i+1}^{-1}P_{i+1}\delta_{i+1})))(\mathcal{O})\\
&=&(U_{\overline{P}_{i+1}'}\cap (M_{i+1}'\cdot(U_{\overline{P}_{i+1}'}\cap\delta_{i+1}^{-1}P_{i+1}\delta_{i+1})))(\mathcal{O})\\
&=&(U_{\overline{P}_{i+1}'}\cap \delta_{i+1}^{-1}P_{i+1}\delta_{i+1})(\mathcal{O})\\
&=&\big((U_{\overline{P}_{i+1}'}\cap \delta_{i+1}^{-1}U_{P_{i+1}}\delta_{i+1})(U_{\overline{P}_{i+1}'}\cap \delta_{i+1}^{-1}M_{i+1}\delta_{i+1})\big)(\mathcal{O}).
\end{eqnarray*}
We write $\tilde a=a_1a_2$ for this decomposition. Replacing $\tilde a$ by $a_2$ and $a$ by $a\delta_{i+1}a_1\delta_{i+1}^{-1}\in U_{P_{i+1}}(\mathcal{O})$ the right hand side of (\ref{eqnew1}) does not change. Hence we may assume that $\tilde a\in (U_{\overline{P}_{i+1}'}\cap \delta_{i+1}^{-1}M_{i+1}\delta_{i+1})(\mathcal{O}).$ 

In particular, as now $\tilde a\in (\delta_{i+1}^{-1}M_{i+1}\delta_{i+1})(\mathcal{O})$ we have $\tilde a^{-1}\delta_{i+1}^{-1}a\delta_{i+1}\tilde{a}\in (\delta_{i+1}^{-1}U_{P_{i+1}}\delta_{i+1})(\mathcal{O})$. Equation (\ref{eqnew1}) is equivalent to 
$$\tilde a (h\tilde a )^{-1}b_{i+1}x_{\mu}\sigma(u_{i+1}h\tilde au_{i+1}^{-1})x_{\mu}^{-1}(x_{\mu}\sigma(u_{i+1}\tilde a^{-1}u_{i+1}^{-1})x_{\mu}^{-1})=\tilde a(\tilde a^{-1}\delta_{i+1}^{-1}a\delta_{i+1}\tilde{a})b_{i+1}'$$
Replacing $h$ by $h\tilde a \in (\delta_{i+1}^{-1}M_{i+1}\delta_{i+1})(\mathcal{O})$ and $\delta_{i+1}^{-1}a\delta_{i+1}$ by $\tilde a^{-1}\delta_{i+1}^{-1}a\delta_{i+1}\tilde{a}$ we obtain the equivalent equation (using these new variables)
$$h^{-1}b_{i+1}x_{\mu}\sigma(u_{{i+1}}hu_{{i+1}}^{-1})x_{\mu}^{-1}=\delta_{i+1}^{-1}a\delta_{i+1}b_{i+1}'\zeta$$
where $$\zeta=x_{\mu}\sigma(u_{i+1}\tilde a u_{i+1}^{-1})x_{\mu}^{-1}\in (M_{i+1}'\cap x_{\mu}\sigma(u_{i+1}U_{\overline{P}_{i+1}'} u_{i+1}^{-1})x_{\mu}^{-1})(\mathcal{O})=U_{\overline{P}_{i+2}'}(\mathcal{O}).$$
As we may multiply $b_{i+1}'$ on the right by elements in $U_{\overline{P}_{i+2}'}(\mathcal{O})$ we may assume that $\zeta=1$, which corresponds to (\ref{eqnew1}) for $\tilde a=1$. In particular, (\ref{eqnew25}) yields $h\delta_{i+1}^{-1}a\delta_{i+1}\in M_{i+1}'(\mathcal{O}),$ and as before it is also an element of $(\delta_{i+1}^{-1}P_{i+1}\delta_{i+1})(\mathcal{O})$. Thus by (\ref{star}) we have $$h\delta_{i+1}^{-1}a\delta_{i+1}\in ((\delta_{i+1}^{-1}M_{i+1}\delta_{i+1}\cap M_{i+1}')(\delta_{i+1}^{-1}U_{P_{i+1}}\delta_{i+1}\cap M_{i+1}'))(\mathcal{O}).$$ As $h\in \delta_{i+1}^{-1}M_{i+1}\delta_{i+1}$ and $a\in U_{P_{i+1}}$ this implies that
\begin{eqnarray}
\nonumber h&\in& (\delta_{i+1}^{-1}M_{i+1}\delta_{i+1}\cap M_{i+1}')(\mathcal{O})\\
\label{heute2} \delta_{i+1}^{-1}a\delta_{i+1}&\in&(\delta_{i+1}^{-1}U_{P_{i+1}}\delta_{i+1}\cap M_{i+1}')(\mathcal{O}).
\end{eqnarray}
We obtain
\begin{eqnarray*}
 h&\in& (\delta_{i+1}^{-1}M_{i+1}\delta_{i+1}\cap M_{i+1}')(\mathcal{O})\\
&=&(u_{i+1}^{-1}\sigma^{-1}(x_{\mu}^{-1}M_{i+1}'x_{\mu})u_{i+1}\cap M_{i+1}')(\mathcal{O})\\
&\subseteq&(u_{i+1}^{-1}\sigma^{-1}(x_{\mu}^{-1}M_{1}'x_{\mu})u_{i+1}\cap M_{i+1}')(\mathcal{O})\\
&=&(u_{i+1}^{-1}M_1u_{i+1}\cap M_{i+1}')(\mathcal{O})=M_{i+2}(\mathcal{O}).
\end{eqnarray*}
By definition, $u_{i+1}\delta_{i+1}^{-1}U_{P_{i+1}}\delta_{i+1}u_{i+1}^{-1}=u_{i}U_{P_{i+1}}u_{i}^{-1}\subseteq U_{P_1}$. Thus (\ref{heute2}) implies
$$\delta_{i+1}^{-1}a\delta_{i+1}\in(u_{i+1}^{-1}U_{P_{1}}u_{i+1}\cap M_{i+1}'))(\mathcal{O})=U_{P_{i+2}}(\mathcal{O}).$$ Altogether this means that via the elements $h\in M_{i+2}(\mathcal{O})$, $\delta_{i+1}^{-1}a\delta_{i+1}\in U_{P_{i+2}}(\mathcal{O})$, and $\zeta\in U_{\overline{P}_{i+2}'}(\mathcal{O})$, we proved that the two elements $b_{i+1},b_{i+1}'\in M_{i+1}'(\mathcal{O})$ are in the same $M_{i+2}(\mathcal{O})$-orbit in $U_{P_{i+2}}(k)\backslash M_{i+1}'(k)/U_{\overline{P}_{i+2}'}(k)$. This finishes the induction step for e) and completes the induction.

The $M_i'$ form a decreasing family of Levi subgroups and thus become constant after finitely many steps. Thus for $n$ sufficiently large,  $x_{\mu}\sigma(u_nM'_nu_n^{-1})x_{\mu}^{-1}=M'_n=M_{n+1}'$, and $M_n=M_n'$. As $M'_n=M_{n+1}'$ we obtain $\sigma(u_nM_n'u_n)\subseteq x_{\mu}^{-1}M_n'x_{\mu}\subseteq x_{\mu}^{-1}M_1'x_{\mu}=M_{\mu}$. Thus we can apply Lemma \ref{thmlang} to obtain that each element of $u_nM_n'(\mathcal{O})\tau_{\mu}$ is $u_nM_n'(\mathcal{O})u_n^{-1}$-$\sigma$-conjugate to $u_n\tau_{\mu}$. Then by the last assertion in (b), $w=u_n$ is as desired.\\

We now prove (2). Each element of $Iw\tau_{\mu}I$ is obviously $I$-$\sigma$-conjugate to some element $g\in w\tau_{\mu}I$.  We have the Iwahori decomposition $I=N_{\mu}(\mathcal{O})I_{M_{\mu}}K_1$ where $I_{M_{\mu}}=I\cap M_{\mu}(\mathcal{O})$ and where $N_{\mu}$ is the unipotent radical of $P_{\mu}=M_{\mu}B$. We apply this to the last factor of $g\in w\tau_{\mu}I$. Now we use Section \ref{sec25} in the form $\epsilon^{\mu}N_{\mu}(\mathcal{O})\subseteq (N_{\mu}(\mathcal{O})\cap K_1)\epsilon^{\mu}$ and see that we can multiply $g$ by elements of $K_1$ on both sides to replace it by an element $g\in w\tau_{\mu}I_{M_{\mu}}$. Thus it is $I_{M_1}$-$\sigma$-conjugate to an element of $I_{M_1} w\tau_{\mu}=I_{M_1} wx_{\mu}\epsilon^{\mu}$. As $w\in{}^{\mu}W={}^{M_1}W$, conjugation by $w$ maps positive roots in $M_1$ to positive roots (not necessarily in $M_1$). Hence we have $w^{-1}I_{M_1}w\subset I\cap(w^{-1}{M_1}(\mathcal{O})w)$. Conjugation by $w_0$ maps all positive roots to negative roots, conjugation by $w_{0,{\mu}}$ maps positive roots in $M_{\mu}$ to negative roots in $M_{\mu}$ (and vice versa) and leaves positive roots in $N_{\mu}$ positive. Hence  
\begin{align*}
I_{M_{1}} wx_\mu\epsilon^{\mu} &=wx_{\mu}((wx_{\mu})^{-1}I_{M_1} wx_\mu)\epsilon^{\mu}\\
&\subseteq K_1wx_{\mu}(I_{M_{\mu}}\cap (wx_{\mu})^{-1}I_{M_1} wx_\mu))\overline{N}_{\mu}(\mathcal{O})\epsilon^{\mu}\\
&\subseteq K_1wx_{\mu}(I_{M_{\mu}}\cap (wx_{\mu})^{-1}I_{M_1} wx_\mu))\epsilon^{\mu}K_1.
\end{align*}  
Iterating this argument we see that the element $g$ is $I$-$\sigma$-conjugate to an element of $K_1wx_{\mu}I_{\infty}\epsilon^{\mu}K_1$ where $I_{\infty}=I\cap \bigcap_{i\geq 0}({\rm Ad}_{(wx_{\mu})^{-1}}\sigma^{-1})^{i}(M_{\mu}(\mathcal{O}))$ and where ${\rm Ad}_{(wx_{\mu})^{-1}}$ denotes conjugation with the given element. As $\bigcap_{i\geq 0}({\rm Ad}_{(wx_{\mu})^{-1}}\sigma^{-1})^{i}(M_{\mu})$ is an intersection of Levi subgroups it is equal to $\bigcap_{i= 0}^n({\rm Ad}_{(wx_{\mu})^{-1}}\sigma^{-1})^{i}(M_{\mu})$ for each sufficiently large $n$. Thus for the preceeding step it is in fact sufficient to $\sigma$-conjugate $g$ by finitely many elements. As $I_{\infty}$ commutes with $\epsilon^{\mu}$ and satisfies $(wx_{\mu})^{-1}\sigma^{-1}(I_{\infty})(wx_{\mu})=I_{\infty}$ we can apply Lemma \ref{thmlang} to obtain that each element of $K_1wx_{\mu}I_{\infty}\epsilon^{\mu}K_1$ is $I_{\infty}$-$\sigma$-conjugate to an element in $K_1w\tau_{\mu}K_1$.
\end{proof}

\section{Closure relations}\label{secclosure}
In this section we assume that $L=k((t))$, i.e.~we consider the function field case. Recall that $S_{w,\mu}$ is the locus in $LG$ where the truncation of level $1$ is equal to $(w,\mu)$.

\begin{lemma} \label{thmclosure}
\begin{enumerate}
\item Each $S_{w,\mu}$ is bounded and admissible.  
\item The closure $\overline{S_{w,\mu}}$ of $S_{w,\mu}$ is a union of finitely many strata $S_{w',\mu'}$.
\item $S_{w,\mu}$ is locally closed, smooth and irreducible.
\item $g_0\in G(L)$ is in $\overline{S_{w,\mu}}$ if and only if it is $K$-$\sigma$-conjugate to an element of $\overline{Iw\tau_{\mu}I}$.
\end{enumerate}
\end{lemma}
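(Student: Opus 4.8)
The plan is to establish the four parts essentially in the order (1), (4), then deduce (2) and (3). For (1), I would start from the proof of Theorem \ref{propdef}: an element $b$ with $\tr(b)=(w,\mu)$ lies in $K\epsilon^\mu K$, so $S_{w,\mu}$ is contained in a single Cartan double coset, giving boundedness. For admissibility, I would chase through the normalization procedure in the proof of Theorem \ref{propdef}: conjugating $b$ to the form $b_0\tau_\mu$ with $b_0\in K$ and noting that the truncation type only depends on the image of $b_0$ in $G(\mathcal{O})/K_n$ for $n$ large enough — in fact the whole inductive construction of the $u_i,b_i,M_i,M_i'$ factors through $G(k)=G(\mathcal{O})/K_1$ once one fixes $\mu$, so $S_{w,\mu}K_{n}=S_{w,\mu}$ for a suitable $n$ depending only on $\mu$ (one needs $n$ large enough that $\epsilon^{-\mu}K_n\epsilon^\mu\subseteq K_1$ and that $K_1$-conjugation absorbs the relevant factors). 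So $S_{w,\mu}$ is a bounded, admissible subscheme in the sense of Section \ref{secnot}, and can be studied inside $\mathcal{B}/K_{n_S}$ for a finite union $\mathcal{B}$ of Cartan cosets.

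For (4), I would argue both inclusions. By Theorem \ref{propdef}(1), every $b$ with $\tr(b)=(w,\mu)$ is $K$-$\sigma$-conjugate into $K_1w\tau_\mu K_1\subseteq \overline{Iw\tau_\mu I}$ (the latter since $Iw\tau_\mu I\subseteq \overline{Iw\tau_\mu I}$ and $K_1\subseteq I$, $K_1w\tau_\mu K_1\subseteq Iw\tau_\mu I$). The map $(g,h)\mapsto g^{-1}h\sigma(g)$ from $K\times \overline{Iw\tau_\mu I}$ to $LG$ has image stable under $K$-$\sigma$-conjugation and, passing to the finite-type quotient $\mathcal{B}/K_{n_S}$, has closed image (it is the image of a proper map after quotienting $K$ by a suitable $K_m$ acting trivially, using admissibility of $\overline{Iw\tau_\mu I}$). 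Hence $\overline{S_{w,\mu}}$, which is the smallest $K$-$\sigma$-conjugation-stable closed set containing $S_{w,\mu}$, is contained in the $K$-$\sigma$-conjugacy orbit of $\overline{Iw\tau_\mu I}$. Conversely, by Theorem \ref{propdef}(2) every element of $Iw'\tau_{\mu'}I$ with $w'\tau_{\mu'}\leq w\tau_\mu$ is $I$-$\sigma$-conjugate, hence $K$-$\sigma$-conjugate, into $K_1w'\tau_{\mu'}K_1\subseteq S_{w',\mu'}$; combined with the closure formula $\overline{Iw\tau_\mu I}=\coprod_{x\leq w\tau_\mu}IxI$ from Section \ref{sec23} (restricting to those $x$ of the form $w'\tau_{\mu'}$ — one must check every $IxI$ appearing in $\overline{Iw\tau_\mu I}$ meets $S_{w',\mu'}$ for the $(w',\mu')$ associated to $x$), this shows the $K$-$\sigma$-conjugacy orbit of $\overline{Iw\tau_\mu I}$ consists of strata meeting $\overline{Iw\tau_\mu I}$, and each such stratum lies in the closed set constructed above, hence in $\overline{S_{w,\mu}}$.

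Part (2) then follows: $\overline{S_{w,\mu}}$ is a union of strata by (4) (it is $K$-$\sigma$-conjugation stable and a union of sets each contained in some stratum), and it is a \emph{finite} union because it is bounded by (1), so it meets only finitely many Cartan cosets $K\epsilon^{\mu'}K$, and for each $\mu'$ only finitely many $w'\in{}^{\mu'}W$ occur. For (3), irreducibility: $S_{w,\mu}$ is the $K$-$\sigma$-conjugation orbit of the image of $Iw\tau_\mu I$ (more precisely of those elements whose truncation is exactly $(w,\mu)$), and $Iw\tau_\mu I$ is irreducible (it is an affine-space bundle over a point in the flag variety), so its image under the $K$-$\sigma$-action, being the continuous image of an irreducible set $K\times Iw\tau_\mu I$ in $\mathcal{B}/K_{n_S}$, is irreducible; local closedness follows since $S_{w,\mu}=\overline{S_{w,\mu}}\setminus\bigcup S_{w',\mu'}$ over the finitely many strata strictly below it (which is open in $\overline{S_{w,\mu}}$ by (2)); smoothness follows because all strata are $K$-$\sigma$-homogeneous — $K$ acts transitively on $S_{w,\mu}$ by $\sigma$-conjugation modulo $K_{n_S}$ — and a homogeneous space for a smooth group in the finite-type quotient is smooth.

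The main obstacle I anticipate is the careful bookkeeping in part (4): matching up which Bruhat-lower elements $x\leq w\tau_\mu$ give which truncation types $(w',\mu')$, and verifying that $\overline{Iw\tau_\mu I}$ is exactly swept out (no more, no less) by $K$-$\sigma$-conjugacy from the strata it meets. The subtlety is that $x\leq w\tau_\mu$ need not itself be of the form $w'\tau_{\mu'}$, so one must use Theorem \ref{propdef}(2) to first move $IxI$ inside $\overline{Iw\tau_\mu I}$ and then into the appropriate $K_1w'\tau_{\mu'}K_1$; making this compatible with the closure operation (rather than just set-theoretic membership) is where care is needed, and it is essentially the content that feeds into Theorem \ref{thmclos}.
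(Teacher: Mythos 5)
Your parts (1), (2), and (3) are essentially correct (though (1) is over-engineered: $S_{w,\mu}$ is a union of $K_1$-double cosets by definition, so $n_S=1$ works directly, and (2) needs only $K$-$\sigma$- and $K_1$-invariance of $\overline{S_{w,\mu}}$, not (4)). The \emph{backward} direction of (4) is also much simpler than your version: Theorem \ref{propdef}(2) gives $Iw\tau_{\mu}I\subseteq S_{w,\mu}$, so $\overline{Iw\tau_{\mu}I}\subseteq\overline{S_{w,\mu}}$, and $\overline{S_{w,\mu}}$ is $K$-$\sigma$-conjugation invariant; no bookkeeping of which $x\leq w\tau_{\mu}$ yields which truncation type is needed.

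The genuine gap is in the \emph{forward} direction of (4). You claim the $K$-$\sigma$-conjugation orbit of $\overline{Iw\tau_{\mu}I}$ is closed because the map $(g,h)\mapsto g^{-1}h\sigma(g)$ becomes ``a proper map after quotienting $K$ by a suitable $K_m$ acting trivially.'' But $K/K_m$ is an \emph{affine} algebraic group of positive dimension, not a proper scheme, so the resulting map of finite-type schemes has no reason to be closed, and the argument collapses. What actually saves the day is the properness of $K/I\cong G(k)/B(k)$: since $\overline{Iw\tau_{\mu}I}$ is $I$-bi-invariant and $\sigma(I)=I$, the $\sigma$-conjugating element matters only through its class in the proper scheme $K/I$, and it is this properness (not any $K_m$-quotient) that one must invoke. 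The paper implements exactly this in valuative-criterion form: given a one-parameter family $g\in G(k[[z]]((t)))$ degenerating into $\overline{S_{w,\mu}}$, it takes the conjugating element $h$ over $k((z))$, uses that $K/I$ is proper to find a lift $\tilde{h}$ of $hI$ over $k[[z]]$, and then observes that $\tilde{h}_{\eta}^{-1}g_{\eta}\sigma(\tilde{h}_{\eta})\in Iw\tau_{\mu}I$, hence the special fiber lies in $\overline{Iw\tau_{\mu}I}$. Note also that the properness only lets you match $h$ up to $I$, not up to $K_1$, which is precisely why the conclusion of (4) involves $\overline{Iw\tau_{\mu}I}$ rather than $\overline{K_1w\tau_{\mu}K_1}$; your sketch does not register this distinction. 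You should replace your ``proper after quotienting by $K_m$'' step by the incidence-correspondence/valuative argument over $K/I$.
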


\begin{proof} 
The stratum is bounded because it is contained in $Kt^{\mu}K$, and admissible because it is invariant under $K_1$. For the second assertion note that $\overline{S_{w,\mu}}$ is invariant under $K$-$\sigma$-conjugation and under multiplication by $K_1$ on both sides. Thus it is a union of strata. The union is finite because $\overline{S_{w,\mu}}\subseteq \overline{Kt^{\mu}K}$, hence each of the strata $S_{w',\mu'}$ in the closure has to satisfy $\mu'\preceq\mu$. The first assertion of (3) follows from (1) and (2). The other two assertions of (3) follow as $S_{w,\mu}/K_1$ is the orbit under the $\sigma$-conjugation action of $K$ of the subscheme $K_1w\tau_{\mu}K_1$. In (4) the second condition implies the first by Theorem \ref{propdef} (2). Now let $g_0\in\overline{S_{w,\mu}}$. Thus there is a $g\in G(k[[z]]((t)))$ such that its reduction modulo $z$ is equal to $g_0$ and such that its image $g_{\eta}$ in $G(k((z))((t)))$ is in $S_{w,\mu}(k((z)))$. Hence there is an $h\in G(k((z))^{\rm alg}[[t]])$ with $h^{-1}g_{\eta}\sigma(h)\in K_{1,k((z))^{\rm alg}}w\tau_{\mu}K_{1,k((z))^{\rm alg}}$. Here $k((z))^{\rm alg}$ denotes an algebraic closure of $k((z))$. Replacing $h$ by a suitable element of $hK_{1,k((z))^{\rm alg}}$ we may assume that it is defined over a finite extension of $k((z))$. We may replace $k((z))$ by that totally ramified extension and thus assume that $h$ is defined over $k((z))$ itself. As $K/I\cong G(k)/B(k)$ is proper, there is a $k[[z]]$-valued point of $K/I$ such that the induced $k((z))$-valued point coincides with the image of $h$ in $K/I(k((z)))$. Let $\tilde h\in G(k[[z,t]])$ be a lift of that point. Such a lift exists because $k[[z]]$ is local, the map $G\rightarrow G/B$ has local sections, and we have the section $G(k((z)))\hookrightarrow K(k((z)))=G(k((z))[[t]])$ of the projection morphism $K\rightarrow G$. Denote by $\tilde{h}_{0}$ and $\tilde h_{\eta}$ the images of $\tilde{h}$ in $G(k[[t]])$ and $G(k((z))[[t]])$, respectively. As the generic points of $h$ and $\tilde h$ coincide up to an element of $I_{k((z))}$, we obtain that $\tilde h_{\eta}^{-1}g_{\eta}\sigma(\tilde h_{\eta})\in Iw\tau_{\mu}I$. Hence $\tilde h_{0}^{-1}g_0\tilde h_0\in\overline{Iw\tau_{\mu}I}$ which proves (4). 
\end{proof}
Before proving Theorem \ref{thmclos} we need some preparations. They are on the lines of \cite{He}, Section 3 where similar results are shown for finite Weyl groups and without the $\sigma$-action (but allowing disconnected groups).
\begin{remark}\label{remhe}
If $x,y,z\in\widetilde{W}$ with $x\in IyIzI$ then $x=y'z$ for some $y'\leq y$. Indeed, this follows by induction from $Is_iIzI\subseteq IzI\cup Is_izI$ for each (finite or affine) simple reflection $s_i$. 
\end{remark}

Let $\overline{N}$ be the unipotent radical of the Borel subgroup opposite of $B$. Let $\mathcal{N}^{-}$ be the inverse image of $\overline{N}$ in $G(k[t^{-1}])\subset G(k((t)))$, compare \cite{Faltings}, Section 2.
\begin{lemma}\label{lemhe33} Let $x,y\in\widetilde{W}$.
The subset $\{x'y\mid x'\leq x\}$ of $\widetilde{W}$ contains a unique minimal element $z$. We have $l(z)=l(y)-l(zy^{-1})$ and $\overline{IxIy\mathcal{N}^{-}}=\overline{Iz\mathcal{N}^{-}}$. In particular, $z\leq x'y'$ for every $x'\leq x$ and $y'\geq y$.
\end{lemma}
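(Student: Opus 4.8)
\emph{Overall strategy.} The plan is to prove all four assertions simultaneously by induction on $\ell(x)$, proving the stronger fact that $z$ is the \emph{smallest} element of $\{x'y\mid x'\le x\}$ (not merely a minimal one) and that $z\le x'y'$ for all $x'\le x$ and all $y'\ge y$. For $\ell(x)=0$ one has $x\in\Omega$, so $x'\le x$ forces $x'=x$; hence the set is the singleton $\{xy\}$, $z=xy$, and everything is immediate: $\ell(xy)=\ell(y)$ since multiplication by an element of $\Omega$ preserves length, so $\ell(z)=\ell(y)-\ell(zy^{-1})$ because $zy^{-1}=x$ has length $0$; the inequality reduces to $xy\le xy'\Leftrightarrow y\le y'$; and since $x$ normalises $I$ we get $\overline{IxIy\mathcal{N}^-}=\overline{Ixy\mathcal{N}^-}=\overline{Iz\mathcal{N}^-}$.

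\emph{The combinatorial step.} For the inductive step write $x=sx_1$ with $s$ a simple affine reflection and $\ell(x)=\ell(x_1)+1$. By the subword characterisation of the Bruhat order, $\{x'\mid x'\le x\}=\{x_1'\mid x_1'\le x_1\}\cup\{sx_1'\mid x_1'\le x_1\}$, so writing $A=\{x_1'y\mid x_1'\le x_1\}$ we get $\{x'y\mid x'\le x\}=A\cup sA$, with smallest element $z_1$ of $A$ supplied by the induction hypothesis. The lifting property of the Bruhat order, applied in each relevant configuration of $s$ with respect to $z_1$ and to an element $a\in A$ with $z_1\le a$, shows: if $sz_1>z_1$ then $z_1\le sa$ for all $a\in A$, while if $sz_1<z_1$ then $sz_1\le a$ and $sz_1\le sa$ for all $a\in A$. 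Hence $z:=\min(A\cup sA)$ equals $z_1$ when $sz_1>z_1$ and equals $sz_1$ when $sz_1<z_1$, and the very same computations with $a=x_1'y'$ (permissible since $z_1\le x_1'y'$ by the strengthened hypothesis) yield $z\le x'y'$. For the length identity: in the first case $zy^{-1}=z_1y^{-1}$ and the identity is the induction hypothesis; in the second case, writing $z_1=x_1^{(0)}y$ with $x_1^{(0)}\le x_1$ and $\ell(z_1)=\ell(y)-\ell(x_1^{(0)})$, one notes that $sx_1^{(0)}>x_1^{(0)}$ --- otherwise $sz_1=(sx_1^{(0)})y$ would lie in $A$, contradicting $z_1=\min A$ --- so $zy^{-1}=sx_1^{(0)}$ has length $\ell(x_1^{(0)})+1$ while $\ell(z)=\ell(z_1)-1$, which gives the identity.

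\emph{The geometric step.} It remains to propagate $\overline{IxIy\mathcal{N}^-}=\overline{Iz\mathcal{N}^-}$. Since $\ell(sx_1)=\ell(x_1)+1$ we have $IsI\cdot Ix_1I=IxI$, so $IxIy\mathcal{N}^-=IsI\cdot\bigl(Ix_1Iy\mathcal{N}^-\bigr)$. The set $Ix_1Iy\mathcal{N}^-$ is stable under left multiplication by $I$, and for such a set $C$ one checks $\overline{IsI\cdot C}=\overline{\overline{IsI}\cdot\overline C}$ (using continuity of multiplication and $\overline{IsI}=IsI\cup I$), so with the inductive identity $\overline{Ix_1Iy\mathcal{N}^-}=\overline{Iz_1\mathcal{N}^-}$ we are reduced to computing $\overline{IsI\cdot\overline{Iz_1\mathcal{N}^-}}$. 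Here one invokes the structure of the orbits $Iv\mathcal{N}^-$ in $LG$: they partition $LG$, their closures satisfy $\overline{Iv\mathcal{N}^-}=\bigcup_{u\ge v}Iu\mathcal{N}^-$, and left multiplication by $IsI$ carries $Iv\mathcal{N}^-$ into $Iv\mathcal{N}^-\cup Isv\mathcal{N}^-$ with $\overline{IsI\cdot Iv\mathcal{N}^-}=\overline{Iv\mathcal{N}^-}$ if $sv>v$ and $=\overline{Isv\mathcal{N}^-}$ if $sv<v$. Feeding in all $v\ge z_1$ and using the lifting property once more (in the two cases $sz_1>z_1$ and $sz_1<z_1$) to see that the resulting indices $\min(v,sv)$ are bounded below by $z$ and that $z$ itself occurs, one gets $\overline{IsI\cdot\overline{Iz_1\mathcal{N}^-}}=\bigcup_{u\ge z}Iu\mathcal{N}^-=\overline{Iz\mathcal{N}^-}$, closing the induction.

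\emph{Main obstacle.} The combinatorial bookkeeping is elementary. The genuine difficulty is the geometric input on the semi-infinite orbits $Iv\mathcal{N}^-$ in the ind-scheme $LG$ --- that they partition $LG$, that their closure relations follow the Bruhat order, and above all that left multiplication by a minimal double coset $IsI$ transforms them as stated (the affine analogue of the interplay between Schubert cells and opposite cells). Setting this up rigorously in the ind-scheme setting, using Faltings' description of the loop group and of the opposite cell, is the technical heart of the argument and is where one follows \cite{He}, Section 3; the role of the right factor $\mathcal{N}^-$ is precisely to make $IxIy\mathcal{N}^-$ ``saturated downward'', so that its closure is one of these orbit closures.
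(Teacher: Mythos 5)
Your proof is correct, and it follows the same global strategy as the paper (induction on $\ell(x)$, peeling a simple reflection $s$ off the left of $x$), but it differs in how the minimal element is produced. The paper obtains the existence of a unique $z=y_\infty$ with $z\le x'y$ for all $x'\le x$ directly from the geometry: it exhausts $\mathcal{N}^-$ by irreducible bounded subschemes $S_i$, lets $y_i$ index the $\mathcal{N}^-$-orbit of the generic point of $IxIyS_i$, observes that the decreasing sequence $(y_i)$ stabilizes at some $y_\infty$, and reads off $\overline{IxIy\mathcal{N}^-}=\overline{Iy_\infty\mathcal{N}^-}$ and $x'y\ge y_\infty$ for free; the induction in the paper then only has to show $y_\infty$ is of the form $x_\infty y$ with $\ell(x_\infty y)=\ell(y)-\ell(x_\infty)$. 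You instead prove the existence and uniqueness of the minimum combinatorially from the lifting property of the Bruhat order, which is a genuinely geometry-free by-product, and then run a separate geometric induction to identify $\overline{IxIy\mathcal{N}^-}$. Your geometric induction also sidesteps the exhaustion-by-$S_i$ argument, since the closure is known by induction to be a single orbit closure $\overline{Iz_1\mathcal{N}^-}$ already; on the other hand, your step that decomposes $\overline{Iz_1\mathcal{N}^-}=\bigcup_{v\ge z_1}Iv\mathcal{N}^-$ and then analyzes $IsI\cdot Iv\mathcal{N}^-$ orbit by orbit is more work than necessary — the paper applies the rank-one calculation once, to the single coset $I\xi'y\mathcal{N}^-$, after using $\overline{Is\cdot\overline D}=\overline{Is\cdot D}$. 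Both routes ultimately rest on the same unproved inputs (the stratification of $I\backslash LG$ by semi-infinite orbits, closure relations via the reverse Bruhat order, and the $SL_2$-type computation of $\overline{IsIv\mathcal{N}^-}$), which you correctly flag as the technical heart.
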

\begin{proof}
For any $x'\leq x$, $Ix'\subseteq \overline{IxI}$. Thus $\overline{Ix'y\mathcal{N}^{-}}\subseteq \overline{IxIy\mathcal{N}^{-}}$. We choose an increasing sequence $S_i$ of irreducible bounded subschemes of $\mathcal{N}^{-}$ with $\mathcal{N}^{-}=\bigcup_i S_i$. Recall from \cite{Faltings}, Section 3 that $I\backslash LG$ is the disjoint union of the $\mathcal{N}^{-}$-orbits of the elements $x\in\widetilde{W}$ and that $Ix_1\mathcal{N}^{-}\subseteq \overline{Ix_2\mathcal{N}^{-}}$ if and only if $x_2\leq x_1$. Note that $IxIyS_i$ is an irreducible bounded and admissible subscheme of $LG$. Let $y_i\in \widetilde{W}$ be the element whose orbit contains the generic point of $IxIyS_i$. Then $y_i\geq y_{i+1}$ for all $i$, hence $y_i=y_{i+1}$ for all sufficiently large $i$. Let $y_{\infty}$ be this element of $\widetilde{W}$. Then $\overline{IxIy\mathcal{N}^{-}}=\overline{Iy_{\infty}\mathcal{N}^{-}}$. As $Ix'y\mathcal{N}^{-}\subseteq \overline{Iy_{\infty}\mathcal{N}^{-}}$ we have that $x'y\geq y_{\infty}$ for all $x'\leq x$. 

It remains to show that $y_{\infty}=x_{\infty}y$ for some $x_{\infty}\leq x$ with $l(x_{\infty}y)=l(y)-l(x_{\infty})$. We use induction on the length of $x$. If $l(x)=0$, the statement is clear. Assume that $l(x)>0$. Let $s_i$ be a simple reflection with $s_ix<x$, and set $\xi=s_i x$. We have $$\overline{IxIy\mathcal{N}^{-}}=\overline{Is_iI\xi Iy\mathcal{N}^{-}}=\overline{Is_i\overline{I\xi Iy\mathcal{N}^{-}}}.$$
By induction there is a $\xi'\leq\xi$ such that $l(\xi' y)=l(y)-l(\xi')$ and $\overline{I\xi Iy\mathcal{N}^{-}}=\overline{I\xi' y\mathcal{N}^{-}}$. Thus 
$$\overline{Is_i\overline{I\xi Iy\mathcal{N}^{-}}}=\overline{Is_i\overline{I\xi' y\mathcal{N}^{-}}}=\overline{Is_iI\xi'y\mathcal{N}^{-}}=\begin{cases}
\overline{I\xi'y\mathcal{N}^{-}}& \text{if }s_i\xi'y>\xi'y\\
\overline{Is_i\xi'y\mathcal{N}^{-}}& \text{if }s_i\xi'y<\xi'y.
\end{cases}$$
We have $\xi'\leq s_i x<x$, thus $s_i\xi'\leq x$. If $s_i\xi'y>\xi'y$ we can choose $x_{\infty}=\xi'$. If $s_i\xi'y<\xi'y$ then $l(s_i\xi'y)=l(\xi' y)-1=l(y)-l(\xi')-1$. Thus $l(s_i\xi')=l(\xi')+1$ and $l(s_i\xi'y)=l(y)-l(s_i\xi')$, and we can choose $x_{\infty}=s_i\xi'$. Thus the assertion holds for $x$.
\end{proof}
\begin{lemma}\label{lemfacts}
\begin{enumerate}
\item If $a,b\in \widetilde{W}$ and $x\leq ab$ then there exist $a'\leq a$ and $b'\leq b$ with $a'b'=x$ and $l(a')+l(b')=l(x)$.
\item Let $M$ and $M'$ be standard Levi subgroups, $w\in \widetilde{W}^M\cap {}^{M'}\widetilde{W}$ and $v\in W_M$. Then $wv\in {}^{M'}\widetilde{W}$ if and only if $v\in{}^{K}W$ where $K=M\cap w^{-1}M'w$.
\end{enumerate}
\end{lemma}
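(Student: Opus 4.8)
The statement to prove is Lemma~\ref{lemfacts}, which has two parts. I will treat them separately.

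\textbf{Part (1).} The plan is to reduce to the case where $a$ is a single simple reflection and then induct on $\ell(a)$. First I would recall the basic fact about the Bruhat order in a Coxeter group (here the affine Weyl group, with the $\Omega$-component being locally constant): if $s$ is a (finite or affine) simple reflection and $x \le sb$, then either $x \le b$, or $sx \le b$ and $\ell(sx) = \ell(x)-1$; in the first case take $a' = 1, b' = x$ or $a' = s, b' = s^{-1}x$ as appropriate with the length additivity bookkeeping, in the second case take $a' = s$ and $b' = sx$, noting $\ell(s) + \ell(sx) = 1 + \ell(x) - 1 = \ell(x)$ after checking $sb \ge x$ forces the right length relation. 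For general $a$, write a reduced expression $a = s_{i_1}\cdots s_{i_n}$ (ignoring the $\Omega$-part, which just comes along for the ride and matches on both sides automatically), set $a_1 = s_{i_1}$ and $\tilde a = s_{i_2}\cdots s_{i_n}$ so $a = a_1 \tilde a$ with $\ell(a) = 1 + \ell(\tilde a)$. From $x \le a_1(\tilde a b)$ and the simple-reflection case I get $x = a_1' x_1$ with $a_1' \le a_1$, $x_1 \le \tilde a b$, $\ell(a_1') + \ell(x_1) = \ell(x)$. Applying the induction hypothesis to $x_1 \le \tilde a b$ yields $x_1 = \tilde a' b'$ with $\tilde a' \le \tilde a$, $b' \le b$, $\ell(\tilde a') + \ell(b') = \ell(x_1)$. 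Then $a' := a_1'\tilde a'$ and $b'$ work, once I check $a' \le a$ (clear, since $a_1' \le a_1$ and $\tilde a' \le \tilde a$ with $a = a_1\tilde a$ a product realizing the Bruhat order) and $\ell(a') + \ell(b') = \ell(x)$, which follows from $\ell(a') \le \ell(a_1') + \ell(\tilde a')$ together with $\ell(a_1') + \ell(\tilde a') + \ell(b') = \ell(x) = \ell(a') + \ell(b')$, forcing equality. The one subtlety is handling the $\Omega$-component correctly: since $x \le ab$ already forces $\omega_x = \omega_{ab} = \omega_a\omega_b$, and $a, b$ each decompose into their $\Omega$ and $W_{\mathrm{aff}}$ parts, one just carries the $\Omega$-part of $a$ (say) into $a'$; I would remark this rather than belabor it.

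\textbf{Part (2).} Here I want to characterize when $wv \in {}^{M'}\widetilde W$, i.e.\ when $wv$ is the shortest element in $W_{M'}wv$. The key is the length formula: since $w \in {}^{M'}\widetilde W$, for any $v \in W_M$ and any $u \in W_{M'}$ we have $\ell(uwv)$ controlled by $\ell(u) + \ell(wv)$ with a correction coming from the roots of $M'$ that $w$ sends into $\pm$(roots of $M$)... more precisely I would use that $w \in {}^{M'}\widetilde W \cap \widetilde W^M$ means $w$ sends simple roots of $M$ to positive roots and $w^{-1}$ sends positive roots of $M'$ to positive roots, so $uw$ has $\ell(uw) = \ell(u) + \ell(w)$ for $u \in W_{M'}$, and more generally one analyzes $uwv$ by moving $u$ past $w$: the relevant subgroup is $\mathcal{K} := M \cap w^{-1}M'w$ (a standard Levi of $M$, as shown in the earlier induction in the proof of Theorem~\ref{propdef} via the ``shortest representative'' fact), and $w\mathcal{K}w^{-1} = wMw^{-1} \cap M' $ is a Levi of $M'$. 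The statement $wv \in {}^{M'}\widetilde W$ should be equivalent to: for every simple reflection $s \in W_{M'}$ with $\ell(swv) < \ell(wv)$... there are none, i.e.\ $swv > wv$ for all such $s$. Translating through $w$: $s w v > wv \iff \ell(w^{-1}sw \cdot v) $ behaves correctly; when $w^{-1}sw \in W_M$ (which happens exactly when the corresponding root of $M'$ pulls back into $M$, i.e.\ $s$ corresponds to a reflection in $w\mathcal{K}w^{-1}$), this becomes a condition purely inside $W_M$ about $v$, namely $v \in {}^{\mathcal K}W$; when $w^{-1}sw \notin W_M$, positivity of $w$ on $M$-positive roots handles it automatically. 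So the proof is: (a) show $\mathcal K$ is a standard Levi of $M$ and $w\mathcal K w^{-1}$ of $M'$, (b) show $W_{M'} \cap wW_Mw^{-1} = W_{w\mathcal K w^{-1}}$, (c) compute $\ell(wv) = \ell(w) + \ell(v)$ and $\ell(swv)$ for simple $s \in W_{M'}$, splitting on whether $s \in W_{w\mathcal K w^{-1}}$, and (d) conclude the equivalence. I would cite \cite{He}, Section~3 and \cite{DDPW}, Lemma~4.17 (the length formula $\ell(xs_\alpha) = \ell(xs_\alpha x^{-1}) + \ell(x)$ used already in the Theorem~\ref{propdef} proof) for the length bookkeeping.

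The main obstacle I anticipate is part~(2): keeping the three Weyl groups $W_M, W_{M'}, W_{\mathcal K}$ and the twisting by $w$ straight, and in particular proving cleanly that $W_{M'} \cap wW_Mw^{-1} = W_{w\mathcal Kw^{-1}}$ and that $\mathcal K$ is genuinely a \emph{standard} Levi — but the latter is exactly the fact already established inside the proof of Theorem~\ref{propdef} (``if $M, M'$ are standard and $x$ is the shortest representative of $W_{M'}xW_M$ then $M' \cap xMx^{-1}$ is standard''), applied with $x = w^{-1}$. Part~(1) is routine Coxeter-group combinatorics and should go through without difficulty once the $\Omega$-component is dispatched with a sentence.
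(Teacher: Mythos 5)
Your argument is correct, but it is routed differently from the paper's in both parts. For (1), the paper first establishes the special case $x = ab$ (every product admits a length-additive factorization $a'b'$ with $a' \le a$, $b' \le b$) by induction via the exchange condition, and then derives the general case by applying the subword property to the concatenated reduced expression of $ab$; you instead induct directly on $\ell(a)$ using the Bruhat-order lifting lemma in the base case. Both are elementary Coxeter combinatorics; the one step you should not leave implicit is $a_1'\tilde a' \le a$, which really does use the length-additivity $\ell(a) = \ell(a_1) + \ell(\tilde a)$ (otherwise a concatenation of reduced subwords of $a_1$ and $\tilde a$ need not be a subword of a reduced word for $a$), so ``a product realizing the Bruhat order'' should be unpacked. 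For (2), the paper gives no argument at all, it simply cites \cite{DDPW}, Theorem 4.18; your root-theoretic sketch is therefore a genuinely different and more self-contained route, and it would go through. The ingredient you need to pin down is the bijection between simple roots of $\mathcal{K}=M\cap w^{-1}M'w$ and the simple roots $\alpha$ of $M'$ with $w^{-1}(\alpha)$ a root of $M$; this is supplied by exactly the length computation $1+\ell(w)=\ell(ws_\beta)=\ell(ws_\beta w^{-1})+\ell(w)$ that the paper already performs inside the Claim in the proof of Theorem \ref{propdef}, which shows each such $w^{-1}(\alpha)$ is simple, together with the complementary observation that $v\in W_M$ preserves positivity of positive roots outside $M$ to dispose of the remaining simple reflections of $M'$.
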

\begin{proof}
For the first assertion, the general statement follows from the special case $x=ab$. This in its turn is a consequence of the exchange property of Coxeter groups using induction: If $a=\omega s_{i_1}\dotsm s_{i_r}$ with $\omega\in \Omega$ is a reduced expression for $a$ and $s_i$ a simple affine reflection then either $l(as_i)=l(a)+1$ or $as_i=\omega s_{i_1}\dotsm \hat s_{i_j}\dotsm s_{i_r}$ for some $j$. For a proof of the second statement see for example \cite{DDPW}, Theorem 4.18.
\end{proof}
\begin{lemma}\label{lemhe36}
Let $M$ and $M'$ be standard Levi subgroups, $w\in \widetilde{W}^M\cap {}^{M'}\widetilde{W}$ and $v\in W_M$. Let $K=M\cap w^{-1}M'w$. Then $wv=xwy$ for some $x\in W_{wKw^{-1}}$ and $y\in W_M\cap {}^K \widetilde{W}$.
\end{lemma}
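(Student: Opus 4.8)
The plan is to derive the factorization from the standard parabolic coset decomposition of $W_M$ relative to $W_K$, combined with conjugation by $w$. The first task is the bookkeeping on $K$: since $w\in\widetilde{W}^M\cap{}^{M'}\widetilde{W}$ says precisely that $w$ is the shortest element of $W_{M'}wW_M$, the same argument that establishes the Claim in the proof of Theorem~\ref{propdef} (that the groups $M_i,M_i'$ occurring there are standard Levi subgroups) shows that $K=M\cap w^{-1}M'w$ is a standard Levi subgroup of $M$, that $wKw^{-1}$ is a standard Levi subgroup of $M'$, and that conjugation by $w$ carries the simple reflections of $K$ to simple reflections of $M'$. In particular $W_{wKw^{-1}}=wW_Kw^{-1}\subseteq W_{M'}$ and this conjugation is length‑preserving; this is the Weyl‑group shadow of \cite{He}, Section~3 (compare also \cite{DDPW}, Lemma~4.17 and Theorem~4.18).

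Next I would record that $W_M\cap{}^K\widetilde{W}$ is nothing but the set of shortest representatives of the cosets $W_K\backslash W_M$: for $z\in W_M\subseteq W$, a positive affine root is sent to a negative one by $z$ exactly when it is a positive finite root $\gamma$ with $z\gamma<0$, and any such $\gamma$ lies in $\Phi_M$; hence the length function of $\widetilde{W}$ restricts on $W_M$ to the length function of $W_M$, so ``shortest in $W_Kz$'' has the same meaning in $W_M$ and in $\widetilde{W}$. Granting this, write $v=v_1y$ with $v_1\in W_K$ and $y\in W_M\cap{}^K\widetilde{W}$ the unique shortest representative of $W_Kv$, so that $\ell(v)=\ell(v_1)+\ell(y)$. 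Setting $x:=wv_1w^{-1}\in wW_Kw^{-1}=W_{wKw^{-1}}$, one gets
\[
wv=wv_1y=(wv_1w^{-1})(wy)=xwy ,
\]
which is exactly the asserted decomposition.

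Finally I would note the length‑additive refinement $\ell(wv)=\ell(x)+\ell(w)+\ell(y)$, which is presumably what the lemma is really used for (e.g.\ in the Bruhat‑order arguments leading to Theorem~\ref{thmclos}): since $w\in\widetilde{W}^M$ we have $\ell(wv)=\ell(w)+\ell(v)=\ell(w)+\ell(v_1)+\ell(y)$, while $\ell(wv_1)=\ell(w)+\ell(v_1)$ together with $w\in{}^{M'}\widetilde{W}$ (so $\ell(u''w)=\ell(u'')+\ell(w)$ for every $u''\in W_{M'}$, applied to $u''=x$) gives $\ell(xw)=\ell(x)+\ell(w)$ and hence $\ell(x)=\ell(v_1)$.

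The conjugation identity in the displayed line is immediate, so the only real content — and the main point to get right — is the preliminary bookkeeping of the first paragraph: that $K$ and $wKw^{-1}$ genuinely are standard Levi subgroups and that $W_{wKw^{-1}}=wW_Kw^{-1}$ sits inside $W_{M'}$ (equivalently, writing $w=u\epsilon^{\lambda}$, that $\langle\alpha,\lambda\rangle=0$ for every simple root $\alpha$ of $K$, so that $u$ sends the simple roots of $K$ to simple roots of $M'$). I expect no genuine obstacle beyond faithfully transcribing the argument from the proof of Theorem~\ref{propdef} and its sources into the $\sigma$‑free affine setting needed here; everything after that is formal.
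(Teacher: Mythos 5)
Your route differs from the paper's. The paper invokes \cite{DDPW}, Theorem 4.18 as a black box to obtain a decomposition $wv = xwy$ with $x\in W_{M'}$ and $y\in W_M\cap{}^K\widetilde W$, and then derives $x\in W_{wKw^{-1}}$ by observing that $x=w(vy^{-1})w^{-1}$ lies in $wW_Mw^{-1}\cap W_{M'}$. You instead decompose $v=v_1y$ inside $W_M$ relative to $W_K$ and conjugate, producing the candidate $x=wv_1w^{-1}$. The identity $wv=(wv_1w^{-1})wy$ is of course formal, so everything rests on your claim that $wv_1w^{-1}$ actually lies in $W_{wKw^{-1}}$.

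That is the gap, and it is not merely a matter of ``faithfully transcribing'' the Claim from the proof of Theorem \ref{propdef}. That Claim concerns elements $x\in W^M$ of the \emph{finite} Weyl group; here $w\in\widetilde W^M\cap{}^{M'}\widetilde W$ is affine, say $w=u\epsilon^\lambda$, and the conjugate $wv_1w^{-1}=(uv_1u^{-1})\,\epsilon^{u(v_1^{-1}\lambda-\lambda)}$ lands in the finite group $W$ only when $v_1$ fixes $\lambda$. You correctly reduce this to $\langle\alpha,\lambda\rangle=0$ for all simple roots $\alpha$ of $K$, but you never establish it, and the conditions $w\in\widetilde W^M$, $w\in{}^{M'}\widetilde W$ give only one-sided inequalities on $\langle\alpha,\lambda\rangle$ that combine to force vanishing in the case $u\alpha>0$ and $u\alpha$ simple in $M'$ --- which is precisely the finite-Weyl-group situation and is not what you start from. (Your first paragraph also asserts that the finite part $u$ satisfies $u\in W^M\cap{}^{M'}W$; this does not follow from $w\in\widetilde W^M\cap{}^{M'}\widetilde W$ either, for the same reason: the translation $\lambda$ can absorb a sign change in $u\alpha$.) In short, the ``preliminary bookkeeping'' you defer to is exactly the nontrivial content, and the paper avoids having to prove it by citing \cite{DDPW}, Theorem 4.18 to get $x\in W_{M'}$ directly; the final sentence of the paper's proof then only needs the easy containment $wW_Mw^{-1}\cap W_{M'}\subseteq W_{wKw^{-1}}$, rather than your stronger and unproved equality $wW_Kw^{-1}=W_{wKw^{-1}}$.
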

\begin{proof}
By \cite{DDPW}, Theorem 4.18 we have $wv=xwy$ for some $x\in W_{M'}$ and $y\in  W_M\cap {}^K \widetilde{W}$. But then $x=wvy^{-1}w^{-1}\in wW_Mw^{-1}\cap W_{M'}=wW_{M\cap w^{-1}M'w}w^{-1}=W_{wKw^{-1}}$.
\end{proof}

\begin{lemma}\label{lemhe}
Let $M$ be a standard Levi subgroup of $G$ and let $x\in {}^M\widetilde{W}$. Let $y\in\widetilde{W}$. Then $y\geq wx\sigma(w)^{-1}$ for some $w\in W_{M}$ if and only if there are $u,v\in W_M$ with $v\leq u$ and $y\geq ux\sigma(v)^{-1}$.
\end{lemma}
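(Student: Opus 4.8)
One implication is immediate: if $y\geq wx\sigma(w)^{-1}$ for some $w\in W_M$, then $u=v=w$ satisfies $v\leq u$ and $y\geq ux\sigma(v)^{-1}$.

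For the converse I would argue by induction on the pair $(\ell(u)-\ell(v),\ \ell(u)+\ell(v))$, ordered lexicographically; the case $u=v$ is the assertion with $w=u$. The basic mechanism is that, since $x\in{}^M\widetilde W$, left multiplication by $W_M$ is length-additive, so $x$ is the minimal element of the coset $W_Mx$ and $x\leq mx$ for all $m\in W_M$. In particular, if $v=e$ then $y\geq ux\sigma(e)^{-1}=ux\geq x=ex\sigma(e)^{-1}$, so $w=e$ works. Now assume $v<u$. It suffices to find $u_1,v_1\in W_M$ with $v_1\leq u_1$, with $(\ell(u_1)-\ell(v_1),\ell(u_1)+\ell(v_1))$ strictly smaller than $(\ell(u)-\ell(v),\ell(u)+\ell(v))$, and with $ux\sigma(v)^{-1}\geq u_1x\sigma(v_1)^{-1}$; granting this, $y\geq ux\sigma(v)^{-1}\geq u_1x\sigma(v_1)^{-1}$ and the inductive hypothesis applied to $(u_1,v_1)$ produces the desired $w$, since the Bruhat order is transitive.

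The construction of $(u_1,v_1)$ follows the descent arguments of \cite{He}, Section 3. Since $v<u$, there is a simple reflection $s$ of $W_M$ with $su<u$. By the lifting property of the Bruhat order on $\widetilde W$, one has $sv\leq su$ if $sv<v$ and $v\leq su$ if $sv>v$; accordingly one takes $(u_1,v_1)=(su,sv)$ or $(su,v)$, and in both cases the length parameters decrease as required. As $x\in{}^M\widetilde W$, the relation $su<u$ implies $sux<ux$, so $s$ is a left descent of $ux$; writing $u_1x\sigma(v_1)^{-1}$ as $s\bigl(ux\sigma(v)^{-1}\bigr)$ in the second case and as $s\bigl(ux\sigma(v)^{-1}\bigr)\sigma(s)$ in the first, one is reduced to comparing these elements with $ux\sigma(v)^{-1}$ in the Bruhat order. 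Using that $\sigma(v)^{-1}\in W_{\sigma(M)}$ permutes the positive roots of $G$ that do not lie in $\sigma(M)$ among themselves, one checks whether $s$ (respectively $\sigma(s)$) remains a left (respectively right) descent after this multiplication, and in the main case this yields precisely the inequality $ux\sigma(v)^{-1}\geq u_1x\sigma(v_1)^{-1}$.

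The principal difficulty is that the Bruhat order is not preserved by one-sided multiplication, so the comparison in the last step genuinely requires a case analysis. The exceptional cases are those configurations in which $x^{-1}$ carries the relevant root of $M$ into the root system of $\sigma(M)$; there the naive reduction fails, and one instead argues that such a configuration forces enough cancellation — again via the minimality of $x$ in $W_Mx$ — to exhibit a suitable $w\in W_M$ directly, for instance one with $\ell(wx\sigma(w)^{-1})=\ell(x)$. For handling these cases I would use Lemmas \ref{lemhe33}, \ref{lemhe36} and \ref{lemfacts}, together with the explicit description of $x_\mu$ and $\tau_\mu$ in terms of the longest elements $w_0$ and $w_{0,\mu}$.
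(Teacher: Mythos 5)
Your reduction to the case $v<u$, the choice of $(u_1,v_1)=(su,sv)$ or $(su,v)$ via the lifting property, and the lexicographic bookkeeping are all fine as far as they go, and the key inequality $ux\sigma(v)^{-1}\geq u_1x\sigma(v_1)^{-1}$ does hold in what you call the main case: for $(u_1,v_1)=(su,v)$ one needs $s$ to remain a left descent of $z=ux\sigma(v)^{-1}$, and for $(u_1,v_1)=(su,sv)$ one needs both $sz<z$ and $z\sigma(s)<z$, which give $sz\sigma(s)\leq z$ by the lifting property. Both conditions reduce to the assertion that $\sigma(v)$ does not change the sign of $x^{-1}u^{-1}(\alpha_s)$, which holds precisely when this root does not lie in the root system of $\sigma(M)$.

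The problem is that the remaining configurations — $x^{-1}u^{-1}(\alpha_s)$ a root of $\sigma(M)$ — are not handled. You acknowledge them, but the sentence ``one instead argues that such a configuration forces enough cancellation \dots\ to exhibit a suitable $w\in W_M$ directly'' is an aspiration, not an argument; it is exactly the hard part of the lemma, and no mechanism is given for producing $w$. The paper's proof is organized precisely so as to isolate and dispose of this phenomenon: it factors $x=ab$ with $a\in{}^{M}\widetilde W\cap\widetilde W^{\sigma(M)}$ and $b\in W_{\sigma(M)}$, introduces the smaller Levi $M'=M\cap a\sigma(M)a^{-1}$, splits $u=u_1u_2$ along $W^{M'}\times W_{M'}$, and runs a double induction on the size of $M$ and on $\ell(u)$. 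When the ``bad'' roots appear they are by construction roots of $M'$, and the outer induction on $|M|$ takes over; the cross-terms are controlled via Lemma~\ref{lemhe33}. Your single induction on $(\ell(u)-\ell(v),\ell(u)+\ell(v))$ has no analogous device for descending to a smaller Levi, so the gap is genuine and not merely a matter of filling in routine details. Finally, the closing reference to $x_\mu$, $\tau_\mu$, $w_0$, $w_{0,\mu}$ is misplaced: Lemma~\ref{lemhe} is a purely combinatorial statement about $\widetilde W$, $W_M$ and $\sigma$, and none of these specific elements enter its proof.
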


\begin{proof}
Let $y\in\widetilde{W}$ and let $u,v\in W_M$ with $v\leq u$ and $y\geq ux\sigma(v)^{-1}$. We have to show that $y\geq wx\sigma(w)^{-1}$ for some $w\in W_{M}$. We use induction on the size of the Levi subgroup and thus may assume that the statement is true for all $M'\subsetneq M$. We use a second induction on the length $l(u)$. We write $x=ab$ with $a\in {}^M{\widetilde W}\cap {\widetilde{W}}^{\sigma(M)}$ and $b\in W_{\sigma(M)}$. Setting $M'=M\cap a\sigma(M)a^{-1}$ we decompose $u$ as $u_1u_2$ with $u_1\in W^{M'}$ and $u_2\in W_{M'}$. Together with $v\leq u$ this induces a decomposition $v=v_1v_2$ with $v_i\leq u_i$ and $l(v)=l(v_1)+l(v_2)$. Note that our choice of $a$ implies that $M'$ is again the Levi factor of a standard parabolic subgroup. We consider two cases:

\noindent \emph{Case 1: $u_1=v_1=1$.}
In this case $u$ and $v$ are in $W_{M'}$, and $x\in {}^{ M'}\widetilde W$. If $M'\neq M$, then the assertion follows from the induction hypothesis. If $M'=M=a\sigma(M)a^{-1}$, then since $ab\in {}^M\widetilde{W}$, we have that $b=1$. Thus $ux\sigma(v)^{-1}\geq x$ which implies the assertion. 

\noindent \emph{Case 2: $u_1\neq 1$.}
In this case $l(u_2)<l(u)$. By induction hypothesis, there is an $x'=u'x\sigma(u')^{-1}\leq u_2x\sigma(v_2)^{-1}$. Let $v_3\leq v_1$ be such that $x'\sigma(v_3)^{-1}$ is the unique element of minimal length in $\{x'\sigma(v')^{-1}\mid v'\leq v_1\}$ (see Lemma \ref{lemhe33}). Then the last assertion of Lemma \ref{lemhe33} implies that $x'\sigma(v_3)^{-1}\leq (u_2x\sigma(v_2)^{-1})\sigma(v_1)^{-1}= u_2x\sigma(v)^{-1}$. By Lemma \ref{lemhe36} we can write $x\sigma(v)^{-1}=a(b\sigma(v)^{-1})\in ({}^M{\widetilde W}\cap {\widetilde{W}}^{\sigma(M)})W_{\sigma(M)}$ as $\alpha a\delta$ with $\alpha\in W_{M'}$ and $\delta\in W_{\sigma(M)}\cap {}^{a^{-1}M'a}W$. By Lemma \ref{lemfacts}(2) $\beta=a\delta\in {}^M\widetilde{W}$. Thus $l(u_1u_2x\sigma(v)^{-1})=l(u_1u_2\alpha\beta)=l(u_1u_2\alpha)+l(\beta)=l(u_1)+l(u_2\alpha)+l(\beta)=l(u_1)+l(u_2\alpha\beta)=l(u_1)+l(u_2x\sigma(v)^{-1})$. As $x'\sigma(v_3)^{-1}\leq u_2x\sigma(v)^{-1}$ and $v_3\leq v_1\leq u_1$, this implies that  $(v_3u')x\sigma(v_3u')^{-1}=v_3x'\sigma(v_3)^{-1}\leq ux\sigma(v)^{-1}\leq y$.
\end{proof}

\begin{proof}[Proof of Theorem \ref{thmclos}]
We have to show that $(w',\mu')$ is the truncation of level 1 of an element of $IyI$ for some $y\leq w\tau_{\mu}$ if and only if it is of the form in the theorem. The \emph{if} part is obvious. For the other direction we use an approach which is similar to the proof of Theorem \ref{propdef} to compute the truncations of level 1 occurring in the cosets $IyI$ for $y$ as above. We decompose $y$ as $w_1\tau_{\mu'}w_1'$ with $w_1,w_1'\in W$, $\mu'$ dominant and such that the lengths of the three elements add up to that of $y$. Each truncation of an element of $IyI$ already occurs in $Iy=Iw_1\tau_{\mu'}w_1'$, and thus also in $\sigma^{-1}(w_1')Iw_1\tau_{\mu'}$. By Remark \ref{remhe}, each such element is contained in $I\sigma^{-1}(\tilde{w}_1')w_1\tau_{\mu'}I$ for some $\sigma^{-1}(\tilde{w}_1')\leq \sigma^{-1}(w_1')$. This is equivalent to $\tilde{w}_1'\leq w_1'$ as $I$ is invariant under $\sigma$. Using Lemma \ref{lemfacts}(1) for $\sigma^{-1}(\tilde{w}_1')w_1$ and replacing $y$ by a smaller element we see that we may assume that $\tilde w_1'=w_1'$ and that $l(\sigma^{-1}(w_1')w_1\tau_{\mu'})=l(w_1\tau_{\mu'}w_1')=l(w_1)+l(\tau_{\mu'})+l(w_1')$. We have to consider the truncation types occurring in $\sigma^{-1}(w_1')w_1\tau_{\mu'}I$. It is enough to show that for each such type $(w',\mu')$ there is a $u\in W$ with $uw'\tau_{\mu'}\sigma(u)^{-1}\leq \sigma^{-1}(w_1')w_1\tau_{\mu'}$. Indeed, by Lemma \ref{lemfacts} (1) this implies that there is a $v_1\leq \sigma^{-1}(w_1')$ such that $v_1^{-1}uw'\tau_{\mu'}\sigma(u)^{-1}\sigma(v_1)\leq w_1\tau_{\mu'}w_1'$. By Lemma \ref{lemhe}, it is furthermore enough to show the following claim.

{\it Claim.} Let $(w',\mu')$ be the truncation of level 1 of an element $g\in Ix\tau_{\mu'}I$ for some $x\in W$. Then there are $v\leq u\in \sigma^{-1}(W_{M_{\mu'}})$ with $uw'\tau_{\mu'}\sigma(v)^{-1}=x\tau_{\mu'}$. 

By $\sigma$-conjugating with the first factor of $g$ we may assume that it is contained in $x\tau_{\mu'}I$. Changing $g$ within its $K_1$-double coset we may assume that the factor in $I$ is in fact contained in $I\cap B(\mathcal{O})\cap \overline{P}_{\mu'}(\mathcal{O})\subseteq I\cap M_{\mu'}(\mathcal{O})$. A second $\sigma$-conjugation then implies that we may assume that $g\in (I\cap M_1(\mathcal{O}))x\tau_{\mu'}$ where $M_1=\sigma^{-1}(M_{\mu'})$ is as in the proof of Theorem \ref{propdef}. Note that for the groups defined in that proof we have $M_i'\subseteq M_1'$ and hence $u_iM_{i+1}(\mathcal{O})u_i^{-1}\subseteq M_1(\mathcal{O})$. In particular, the construction in this proof implies for the element $g\in (I\cap M_1(\mathcal{O}))x\tau_{\mu'}$ that there is an $f\in M_1(\mathcal{O})$ with $f^{-1}g\sigma(f)\in K_1 w'\tau_{\mu'}K_1$. We decompose $f$ as $f=i_1ui_2\in IW_{\sigma^{-1}M_{\mu'}}I$. Then $i_1ui_2 w'\tau_{\mu'}\sigma(i_1ui_2)^{-1}\in Ix\tau_{\mu'}I$. Recall that $\tau_{\mu'}$ is the shortest representative of its $W$-double coset and $w'\in {}^{\sigma^{-1}(M_{\mu'})}W$. Thus $w'\tau_{\mu'}\in {}^{\sigma^{-1}(M_{\mu'})}\widetilde W$. Hence  
$Ix\tau_{\mu'}I\subseteq IuI w'\tau_{\mu'}I\sigma(u)^{-1}I=Iu w'\tau_{\mu'}I\sigma(u)^{-1}I$. Thus there is a $v\leq u$ with $uw'\tau_{\mu'}\sigma(v)^{-1}=x\tau_{\mu'}$. 

\end{proof}

The following corollary to the theorem which considers the special case $\mu=\mu'$ is analogous to results by He \cite{He} and Wedhorn \cite{Wedhorn}.
\begin{kor}\label{corclosure}
$S_{w',\mu}\subseteq  \overline{S_{w,\mu}}$ if and only if there is a $\tilde w\in \sigma^{-1}(W_{M_\mu})$ with $\tilde{w}^{-1}w'x_{\mu}\sigma(\tilde{w})x_{\mu}^{-1}\leq w$. 
\end{kor}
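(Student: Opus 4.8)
The plan is to derive Corollary \ref{corclosure} as the special case $\mu'=\mu$ of Theorem \ref{thmclos}, unwinding the definitions of the elements $\tau_{\mu}$ and $x_{\mu}$ to translate the Bruhat-order condition into one phrased in terms of the finite Weyl group. So first I would apply Theorem \ref{thmclos} with $\mu'=\mu$: the inclusion $S_{w',\mu}\subseteq\overline{S_{w,\mu}}$ is equivalent to the existence of some $\tilde w\in W$ with $\tilde w w'\tau_{\mu}\sigma(\tilde w)^{-1}\le w\tau_{\mu}$ in the Bruhat order. The goal is then to show that, after replacing $\tilde w$ if necessary, this $\tilde w$ may be taken in $\sigma^{-1}(W_{M_{\mu}})$, and that the condition then rewrites as $\tilde w^{-1}w' x_{\mu}\sigma(\tilde w)x_{\mu}^{-1}\le w$.

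The main technical input will be Lemma \ref{lemhe} applied to the Levi $M_1=\sigma^{-1}(M_{\mu})$ and the element $w'\tau_{\mu}$, together with the fact that $w'\tau_{\mu}\in {}^{M_1}\widetilde W$ (since $w'\in{}^{\mu}W={}^{M_1}W$ and $\tau_{\mu}$ is the shortest element in its $W$-double coset). Lemma \ref{lemhe} tells us that $y\ge \tilde w\, w'\tau_{\mu}\,\sigma(\tilde w)^{-1}$ for some $\tilde w\in W_{M_1}$ iff $y\ge u\,w'\tau_{\mu}\,\sigma(v)^{-1}$ for some $u,v\in W_{M_1}$ with $v\le u$; but I also need to relate the condition ``for some $\tilde w\in W$'' (as it appears in Theorem \ref{thmclos}) to ``for some $\tilde w\in W_{M_1}$''. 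This should follow because conjugation of $w'\tau_{\mu}$ by an element of $W$ not in $W_{M_1}$ can only increase length in the relevant sense — more precisely, writing $\tilde w=\tilde w_1\tilde w_2$ with $\tilde w_2\in W_{M_1}$ and $\tilde w_1\in W^{M_1}$, one shows using the length-additivity coming from $w'\tau_{\mu}\in{}^{M_1}\widetilde W$ that $\tilde w_2 w'\tau_{\mu}\sigma(\tilde w_2)^{-1}\le \tilde w w'\tau_{\mu}\sigma(\tilde w)^{-1}$, so only $\tilde w\in W_{M_1}$ matter. Once $\tilde w\in W_{M_1}=\sigma^{-1}(W_{M_{\mu}})$, I unwind: $\tilde w w'\tau_{\mu}\sigma(\tilde w)^{-1}=\tilde w w' x_{\mu}\epsilon^{\mu}\sigma(\tilde w)^{-1}$, and since $\sigma(\tilde w)\in W_{M_{\mu}}$ centralizes $\mu$ (as $M_{\mu}$ is the centralizer of $\mu$) we have $\epsilon^{\mu}\sigma(\tilde w)^{-1}=\sigma(\tilde w)^{-1}\epsilon^{\mu}$, hence $\tilde w w'\tau_{\mu}\sigma(\tilde w)^{-1}=\bigl(\tilde w w' x_{\mu}\sigma(\tilde w)^{-1}x_{\mu}^{-1}\bigr)\tau_{\mu}$. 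Comparing with $w\tau_{\mu}$ and using that right multiplication by $\tau_{\mu}$ respects the Bruhat order on the relevant cosets (both elements lie in ${}^{M_1}\widetilde W\,\tau_{\mu}$, so lengths add and $x\tau_{\mu}\le w\tau_{\mu}\iff x\le w$ for $x,w\in{}^{M_1}W$), the inequality becomes $\tilde w w' x_{\mu}\sigma(\tilde w)^{-1}x_{\mu}^{-1}\le w$; finally replacing $\tilde w$ by $\tilde w^{-1}$ gives exactly the stated form.

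The step I expect to be the main obstacle is the reduction from ``$\tilde w\in W$'' to ``$\tilde w\in W_{M_{\mu}}$-type'' and the careful bookkeeping of Bruhat order under left/right multiplication by $\tau_{\mu}$ and conjugation. One must verify that the element $\tilde w w' x_{\mu}\sigma(\tilde w)^{-1}x_{\mu}^{-1}$ actually lies in $W$ (which needs $x_{\mu}M_{\mu}x_{\mu}^{-1}$ and $M_{\mu}$ interacting correctly with $w'\in{}^{\mu}W$), and that the equivalence $x\tau_{\mu}\le w\tau_{\mu}\iff x\le w$ genuinely holds for the elements at hand — this relies on $\tau_{\mu}$ being the minimal-length representative of $W\epsilon^{\mu}W$ and the length-additivity $l(x\tau_{\mu})=l(x)+l(\tau_{\mu})$ for $x\in{}^{M_1}W$, which in turn uses $\sigma$-invariance of $B$. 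Given that the analogous finite-Weyl-group and $\sigma$-free statements are already in He \cite{He} and Wedhorn \cite{Wedhorn}, and that Lemmas \ref{lemhe33}--\ref{lemhe} have been set up precisely for this, I expect the argument to be short once these compatibilities are recorded.
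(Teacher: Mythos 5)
The paper's proof does not pass through Lemma \ref{lemhe} or through a reduction ``from $\tilde w\in W$ to $\tilde w\in W_{M_1}$''; that reduction is where your sketch breaks.

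Your key intermediate claim is that, writing $\tilde w=\tilde w_1\tilde w_2$ with $\tilde w_1\in W^{M_1}$ and $\tilde w_2\in W_{M_1}$, one has $\tilde w_2 w'\tau_{\mu}\sigma(\tilde w_2)^{-1}\leq \tilde w\, w'\tau_{\mu}\sigma(\tilde w)^{-1}$. This is false in general. Take $G=SL_2$ split (so $\sigma$ acts trivially on $W$) with $\mu$ regular dominant; then $M_1=T$, $W_{M_1}=\{1\}$, $W^{M_1}=W=\{1,s\}$, and $\tau_{\mu}=s_0$. With $w'=s$ and $\tilde w=s$ (so $\tilde w_1=s$, $\tilde w_2=1$) the claim reads $ss_0\leq s\cdot s s_0\cdot s=s_0s$, which fails: $ss_0$ and $s_0s$ are incomparable in the Bruhat order on $W_{\rm aff}$. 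So conjugation by $\tilde w_1\in W^{M_1}$ does not in general move an element up in the Bruhat order, and this step cannot be repaired by the kind of length-additivity you invoke.

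The actual mechanism in the paper is of a different nature: no replacement of $\tilde w$ is made at all. Since $\tilde w\, w'\tau_{\mu}\sigma(\tilde w)^{-1}$ lies in $W\epsilon^{\mu}W$ and $\tau_{\mu}$ is the unique shortest element of that double coset, an element of $W\epsilon^{\mu}W$ which is $\leq w\tau_{\mu}$ is necessarily of the form $w_y\tau_{\mu}$ with $w_y\leq w$. Writing $\tilde w\, w'\tau_{\mu}\sigma(\tilde w)^{-1}=w_y\tau_{\mu}$ forces $\tau_{\mu}\sigma(\tilde w)\tau_{\mu}^{-1}\in W$, which (using $\tau_{\mu}=x_{\mu}\epsilon^{\mu}$ and that $\epsilon^{\mu}v\epsilon^{-\mu}\in W$ iff $v$ fixes $\mu$) is equivalent to $\sigma(\tilde w)\in W_{M_{\mu}}$ — i.e.~$\tilde w\in\sigma^{-1}(W_{M_{\mu}})$ is \emph{automatic}, not something one arranges. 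After that, $w_y=\tilde w^{-1}w'x_{\mu}\sigma(\tilde w)x_{\mu}^{-1}\leq w$ (after renaming $\tilde w\leftrightarrow\tilde w^{-1}$) drops out immediately. Your unwinding computation in the last step (commuting $\sigma(\tilde w)$ past $\epsilon^{\mu}$) is correct and coincides with the paper's; but note also that the element $\tilde w w'x_{\mu}\sigma(\tilde w)^{-1}x_{\mu}^{-1}$ is not in ${}^{M_1}W$ in general (it begins with $\tilde w\in W_{M_1}$), so the restriction of your cancellation fact ``$x\tau_{\mu}\leq w\tau_{\mu}\iff x\leq w$'' to $x\in{}^{M_1}W$ is not the version you need — the correct input is that this holds for all $x,w\in W$ because $\tau_{\mu}$ is minimal in $W\tau_{\mu}$, hence $\ell(u\tau_{\mu})=\ell(u)+\ell(\tau_{\mu})$ for every $u\in W$.
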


\begin{proof}
Recall that $\tau_{\mu}$ is the unique shortest element of the extended affine Weyl group lying in $Wt^{\mu} W$. Especially, $y\leq w\tau_{\mu}$ with $y\in Wt^{\mu} W$ if and only if $y=w_y\tau_{\mu}$ for some $w_y\leq w$ in $W$. From the theorem we obtain that $S_{w',\mu}\subseteq  \overline{S_{w,\mu}}$ if and only if there is a $\tilde w\in W$ such that $\tilde{w}^{-1}w'\tau_{\mu}\sigma(\tilde{w})=w_y\tau_{\mu}$ for some $w_y$ as above. Thus $\tau_{\mu}\sigma(\tilde{w})=v\tau_{\mu}$ for some $v\in W$. As $\tau_{\mu}=x_{\mu}t^{\mu}$ we obtain $v=x_{\mu}\sigma(\tilde{w})x_{\mu}^{-1}$ and $\sigma(\tilde{w})\in W_{M_\mu}$.
\end{proof}

\section{Non-emptiness of intersections of truncation strata with $\sigma$-conjugacy classes}\label{secproof}

For the discussion of short elements we allow both possible cases for $F$. 

\begin{definition}\label{defshort}
Let $[b]\in B(G)$ and let $\nu\in X_*(T)_{\mathbb{Q}}^{\Gamma}$ be its dominant Newton point. Let $M_{\nu}$ be the centralizer of $\nu$ in $G$. Then $x\in\widetilde{W}$ is called \bsh if $x\in \Omega_{M_{\nu}}\subseteq \widetilde{W}_{M_{\nu}}$, the ${M_{\nu}}$-dominant Newton point of $x$ is equal to $\nu$, and $\kappa_G(x)=\kappa_G(b)$.

An element $x\in\widetilde{W}$ is called \sh if it is \bsh for some $b\in B(G)$.
\end{definition}
\begin{remark}
From the classification of $B(G)$ we obtain that all \bsh elements are contained in $[b]$.
\end{remark}
\begin{lemma}\label{lemshort}
Each $[b]\in B(G)$ contains a \bsh element. If $G$ is split, this element is unique.
\end{lemma}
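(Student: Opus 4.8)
The goal is to show that every $\sigma$-conjugacy class $[b]\in B(G)$ contains a \bsh element, and that this element is unique when $G$ is split. The key point is that a \bsh element is by definition an element $x$ of length $0$ in $\widetilde{W}_{M_\nu}$ (i.e.\ $x\in\Omega_{M_\nu}$) with $M_\nu$-dominant Newton point equal to the dominant Newton point $\nu$ of $[b]$ and with $\kappa_G(x)=\kappa_G(b)$. So the existence problem reduces to a statement about the group $M=M_\nu$: I must produce an element of $\Omega_M$ that is \emph{basic} in $M$ (its $M$-Newton point is central in $M$, hence equals $\nu$ since $\nu$ is central in $M=M_\nu$) and that has the prescribed image under $\kappa_M$, namely the one corresponding to $\kappa_G(b)$.

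\textbf{Reduction to Levi and to the basic case.} First I would recall that, by Kottwitz's classification, a class $[b]\in B(G)$ with dominant Newton point $\nu$ comes from a unique basic class $[b_M]\in B(M)$ where $M=M_\nu$; the Newton point of $[b_M]$ is $\nu$ (which is central in $M$), and $\kappa_M(b_M)$ maps to $\kappa_G(b)$ under the natural map $\pi_1(M)_\Gamma\to\pi_1(G)_\Gamma$. So it suffices to treat the case where $[b]$ is basic in $G$, i.e.\ to show: every basic $\sigma$-conjugacy class in $B(G)$ contains an element of $\Omega$. For this I use the description from Section~\ref{sec22}: $\kappa_G$ restricts to an isomorphism $\Omega\cong X_*(\T)/X_*(\T_{sc})\to\pi_1(G)$ followed by the projection $\pi_1(G)\to\pi_1(G)_\Gamma$. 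Since a basic class is determined by $\kappa_G$ alone (its Newton point is forced to be the central element with the same image in $\pi_1(G)_\Gamma\otimes\mathbb{Q}$), and since $\kappa_G:\Omega\to\pi_1(G)_\Gamma$ is \emph{surjective}, I can pick $\omega\in\Omega$ with $\kappa_G(\omega)=\kappa_G(b)$. Then $[\omega]$ is a basic class (length-zero elements of $\widetilde W$ are basic, their Newton point being the unique central element with the right $\kappa$) with $\kappa_G(\omega)=\kappa_G(b)$, hence $[\omega]=[b]$. Transporting $\omega$ back through the Levi $M=M_\nu$ (viewing $\Omega_M\subseteq\widetilde W_M\subseteq\widetilde W$) gives the desired \bsh element $x$.

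\textbf{Uniqueness in the split case.} When $G$ is split, $\Gamma$ acts trivially, so $\pi_1(G)_\Gamma=\pi_1(G)$ and $\kappa_G:\Omega\xrightarrow{\sim}\pi_1(G)$ is a \emph{bijection}. Applied inside $M=M_\nu$ (which is also split), this says $\Omega_M\xrightarrow{\sim}\pi_1(M)$. Two \bsh elements $x,x'$ of $[b]$ lie in $\Omega_M$ and have $\kappa_M(x)=\kappa_M(x')$, because $\kappa_M$ is determined on the basic class $[b_M]\in B(M)$ — indeed a basic class in $B(M)$ is determined by its image under $\kappa_M$, and both $x,x'$ represent $[b_M]$ (they have Newton point $\nu$, which is central in $M$, and the same $\kappa_G$, which pins down $\kappa_M$ since $M\to G$ is a Levi and $\nu$ is central). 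Hence $x=x'$ by injectivity of $\kappa_M$ on $\Omega_M$.

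\textbf{Expected main obstacle.} The only genuinely delicate point is the bookkeeping with the maps $\kappa_M\to\kappa_G$ and the claim that a \bsh element of $[b]$, regarded in $B(M)$, represents the unique basic class $[b_M]$ with the correct invariants; one must be careful that the $M$-dominant Newton point condition together with $\kappa_G(x)=\kappa_G(b)$ really does force $\kappa_M(x)=\kappa_M(b_M)$, which uses that $\nu$ central in $M$ determines the fibre of $\pi_1(M)_\Gamma\to\pi_1(M)_\Gamma\otimes\mathbb{Q}$ containing $\kappa_M(b_M)$ only up to torsion — so the argument for \emph{uniqueness} really does need $G$ (hence $M$) split, where there is no torsion ambiguity and $\kappa$ is injective on $\Omega$. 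For existence no such subtlety arises, since surjectivity of $\kappa_G$ on $\Omega$ always holds.
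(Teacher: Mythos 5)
Your proof follows essentially the same route as the paper: for existence you pass to the Levi $M=M_{\nu}$, use Kottwitz's description of $[b]$ in terms of a basic class in $B(M)$, and then pick the element of $\Omega_M$ with the right $\kappa_M$; for uniqueness you use that $\kappa_M\colon\Omega_M\to\pi_1(M)$ is an isomorphism in the split case. The existence half is fine.

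The uniqueness half, however, has a genuine gap in the step you yourself flag as the delicate point. You need that two \bsh elements $x,x'$ have the same $\kappa_M$, knowing only that they have the same $\kappa_G$ and the same ($M$-central) Newton point. You assert this holds because, for split $G$, ``there is no torsion ambiguity'' in the fibre of $\pi_1(M)\to\pi_1(M)\otimes\mathbb{Q}$ containing $\kappa_M(b_M)$. That is not true: $\pi_1(M)$ can have torsion even when $G$ (hence $M$) is split. For instance with $G=PGL_2\times PGL_2$ and the Levi $M=PGL_2\times \mathbb{G}_m$ one has $\pi_1(M)\cong(\mathbb{Z}/2)\times\mathbb{Z}$, so the map $\pi_1(M)\to\pi_1(M)\otimes\mathbb{Q}$ has a nontrivial torsion kernel. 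What actually makes the argument work is the fact the paper invokes: the kernel of $\pi_1(M)\to\pi_1(G)$ is \emph{torsion-free} (it is $Q_G^{\vee}/Q_M^{\vee}$, a quotient of the free $\mathbb{Z}$-module on the simple coroots of $G$ by the submodule spanned by the simple coroots of $M$, hence free). Given that, the difference $\kappa_M(x)-\kappa_M(x')$ lies both in $\ker(\pi_1(M)\to\pi_1(G))$ and in the torsion subgroup of $\pi_1(M)$, so it vanishes. Splitness enters separately, only to identify $\pi_1(G)_{\Gamma}$ with $\pi_1(G)$ and to make $\kappa_M\colon\Omega_M\to\pi_1(M)$ a bijection; it does not by itself remove torsion from $\pi_1(M)$. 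You should replace the ``no torsion ambiguity'' claim with the torsion-freeness of $\ker(\pi_1(M)\to\pi_1(G))$.
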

\begin{proof}
Let $\nu\in X_*(T)_{\mathbb{Q}}^{\Gamma}$ be the dominant Newton point of $b$ and let $M$ be the centralizer of $\nu$ in $G$. Then there is an element $b_0$ of $M(L)\cap [b]$ whose $M$-dominant Newton point is equal to $\nu$ (\cite{Kottwitz1}, Proposition 6.2). Let $\mu_0\in X_*(\T)$ be $M$-dominant with $b_0\in M(\mathcal{O})\epsilon^{\mu_0} M(\mathcal{O})$. Let $\omega$ be the image of $\mu_0$ in $\pi_1(M)$. Note that its image under the projection to $\pi_1(G)_{\Gamma}$ is equal to $\kappa_G(b)$. Let $x\in \Omega_M$ be the unique element whose image under the isomorphism to $\pi_1(M)$ is equal to $\omega$. Then $x$ is basic in $M$ with $\kappa_M(x)=\kappa_M(b_0)$, hence with $M$-dominant Newton point $\nu$. In particular, $x$ is $[b]$-short. 

For split $G$, we have $\pi_1(G)=\pi_1(G)_{\Gamma}$. The kernel of the projection $\pi_1(M)\rightarrow \pi_1(G)$ is torsion free. Hence $\omega\in\pi_1(M)$ is the unique element whose image in $\pi_1(G)$ is equal to $\kappa(b)$ and whose image in $\pi_1(M)\otimes\mathbb{Q}$ is equal to the image of $\nu$ under the projection to $\pi_1(M)\otimes\mathbb{Q}$. Each element $b'$ of $[b]\cap M(L)$ whose $M$-dominant Newton point is equal to $\nu$ has to satisfy $\kappa_M(b')=\omega$. Thus, there is a unique such element which lies in $\Omega_M$.
\end{proof}

For the rest of this section let $L=k((t))$, i.e.~we consider the function field case.
\begin{remark}\label{remit}
By Theorem \ref{propdef} (2), $S_{w,\mu}$ has nonempty intersection with some $\sigma$-conjugacy class $[b]$ if and only if $[b]\cap Iw\tau_{\mu}I\neq\emptyset$. By the Grothendieck specialization theorem \cite{RapoportRichartz}, Theorem 3.6, the generic $\sigma$-conjugacy class in $S_{w,\mu}$ resp. the generic class in $Iw\tau_{\mu}I$ are the largest classes (with respect to $\preceq$) whose intersections with $S_{w,\mu}$ resp. $Iw\tau_{\mu}I$ are non-empty. Hence also these generic classes coincide.
\end{remark}

\begin{prop}\label{propmain}
Let $b\in G(L)$ and let $M$ be the centralizer of its dominant Newton point. Let $x\in\widetilde{W}$ with $b\in IxI$. Then there is a \bsh element $x_b\in\widetilde{W}$ and a $w\in {}^MW$ with $w^{-1}x_b \sigma(w)\leq x$ in the Bruhat order.
\end{prop}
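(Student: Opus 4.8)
The plan is to reduce to elements of a Levi subgroup and then move the relevant conjugation into the extended affine Weyl group of that Levi. First I would use the fact that the $\sigma$-conjugacy class $[b]$ has dominant Newton point $\nu$ with centralizer $M$, and invoke Kottwitz (\cite{Kottwitz1}, Proposition 6.2) together with Lemma \ref{lemshort} to produce a $[b]$-short element $x_b\in\Omega_M\subseteq\widetilde W_M$. So the content is not the existence of $x_b$ but rather the Bruhat-order comparison $w^{-1}x_b\sigma(w)\le x$ for a suitable $w\in{}^MW$. The natural strategy, parallel to the proof of Theorem \ref{propdef}, is to study the $I$-$\sigma$-conjugacy class of $b\in IxI$ inside the loop group and track where it meets $M(L)$: any $\sigma$-conjugate of $b$ lying in $IxI$ can, via the Iwahori decomposition with respect to $P_M$ and the Bruhat--Tits decomposition, be brought into a coset $I_M x_M I_M$ with $x_M\in\widetilde W_M$; and because $b$ and its $\sigma$-conjugate stay in $[b]$, the element $x_M$ has the same Newton and Kottwitz invariants, hence by Lemma \ref{lemshort} (applied inside $M$) is $\sigma_M$-conjugate in $\widetilde W_M$, or at least Bruhat-comparable after conjugation, to $x_b$.

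Concretely I would argue as follows. Write $x=w_1\tau_{\mu_1}w_1'$ with lengths adding, where $\mu_1$ is the dominant cocharacter with $b\in K\epsilon^{\mu_1}K$ (Cartan), and note that $\mu_1$ is $M$-dominant after replacing $x$ within the arguments of Theorem \ref{propdef}. Using Remark \ref{remhe} and Lemma \ref{lemfacts}(1) exactly as in the proof of Theorem \ref{thmclos}, I can replace $x$ by a smaller element and assume $b$ lies in $(I\cap M_1(\mathcal{O}))x'\tau_{\mu_1}$ with $M_1=\sigma^{-1}(M_{\mu_1})$; then the B\'edard-type construction from the proof of Theorem \ref{propdef}, restricted to $M_1$, shows $b$ is $M_1(\mathcal{O})$-$\sigma$-conjugate into a coset indexed by some $x_M\in\widetilde W_M$ with $x_M\le x$ up to a $W_{M_{\mu_1}}$-adjustment of the form $\tilde w^{-1}(\cdot)\sigma(\tilde w)$. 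Since the $\sigma$-conjugation never left $M(L)$ in this last step, $x_M$ and $b$ have the same image in $B(M)$, so $x_M$ has dominant Newton point $\nu$ (with respect to $M$) and Kottwitz invariant $\kappa_M(b)$; by the uniqueness part of Lemma \ref{lemshort} applied to a still smaller Levi, $x_M$ is $W_M$-$\sigma$-conjugate to the length-zero element $x_b$. Combining, and using Lemma \ref{lemhe} to absorb the intermediate $W_{M_{\mu_1}}$-conjugations into a single $w\in{}^MW$ with $v\le w$, yields $w^{-1}x_b\sigma(w)\le x$.

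The main obstacle I expect is bookkeeping the two different conjugation actions: the inner $\sigma$-conjugation used to bring $b$ into $M(L)$ lands in $W_{M_{\mu_1}}$, whereas the final statement demands conjugation by an element of ${}^MW$ with $M$ the Newton centralizer, and these two Levis need not coincide (one is $M_{\mu_1}$, the other is $M_\nu$). Reconciling them is exactly what Lemma \ref{lemhe} is designed for, but one must check its hypotheses carefully — in particular that $x_b\in{}^M\widetilde W$ (true since $x_b\in\Omega_M$ has length $0$) and that the chain of inequalities $v_3\le v_1\le u_1$ survives each reduction. A secondary subtlety is ensuring that every time I "replace $x$ by a smaller element" via Lemma \ref{lemfacts}(1) or Remark \ref{remhe}, the conclusion for the smaller $x$ still implies it for the original $x$; this is fine because $\le$ is transitive and $w^{-1}x_b\sigma(w)\le x'\le x$, but it must be stated explicitly. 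Once these compatibilities are in place, the rest is a routine assembly of the lemmas already proved.
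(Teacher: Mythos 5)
Your proposal diverges substantially from the paper's proof, and the route you sketch has a gap that I don't see how to close with the tools you invoke. The key geometric idea in the paper's argument is a degeneration: starting from a $[b]$-short element $y\in\widetilde W_M$ with $g^{-1}y\sigma(g)=b\in IxI$, one uses the Iwasawa and Bruhat decompositions to reduce to $g=nmw$ with $n\in N(L)$, $m\in M(L)$, $w\in W$ minimal in $W_M w$, and then conjugates by a one-parameter family $\chi(a)$ central in $M$ and expanding on $N$. Because the family stays in $IxI$ for $a\neq 0$, passing to $a=0$ kills the unipotent part $n$ and shows $w^{-1}m^{-1}y\sigma(mw)\in\overline{IxI}$. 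Only after this does one manipulate purely inside $M$, using that $w^{-1}I_M w\subseteq I$ and the Cartan decomposition for $M$, to replace $m^{-1}y\sigma(m)$ by the length-zero element $x_b\in\Omega_M$. Your proposal, by contrast, tries to $\sigma$-conjugate $b$ directly into $M(L)$ via Iwahori manipulations and the B\'edard-type induction. That induction is designed to normalize a $K_1$-double coset, not to push an arbitrary element of $IxI$ into the Levi, and it is not clear why a chain of such $\sigma$-conjugations would ever land you in $M(L)$ at all, much less inside $\overline{IxI}$. You also have no step producing the inclusion in the \emph{closure} of $IxI$, which is exactly what the $\chi(a)$-degeneration buys; without it there is nothing to compare in the Bruhat order.

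A second, independent problem is your intended use of Lemma \ref{lemhe}. That lemma concerns $W_M$-$\sigma$-conjugation ($y\geq wx\sigma(w)^{-1}$ for $w\in W_M$), whereas the Proposition asks for $w\in{}^M W$ — the two sets are disjoint except for $1$. The paper produces $w\in{}^M W$ directly, by taking $w$ minimal in its coset $W_M w$ in the Iwasawa-Bruhat decomposition of $g$; minimality is then what makes $w^{-1}I_M w\subseteq I$ and $\sigma(w)^{-1}I_M\sigma(w)\subseteq I$, which is needed for the final inclusion into $\overline{IxI}$. Your proposal never constructs such a $w$ and instead hopes to "absorb" a $W_{M_{\mu_1}}$-conjugation into a single $w$, but Lemma \ref{lemhe} does not perform such an absorption; it is an equivalence used in the proof of Theorem \ref{thmclos}, where the Levi is fixed throughout as $M_{\mu'}$. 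Finally, the identification of the Levi you normalize in ($M_{\mu_1}$, from the Cartan decomposition) with the Newton centralizer $M_\nu$, which you flag as the "main obstacle," is not a bookkeeping issue — these Levis really differ in general, and the paper avoids the whole problem by working with $M_\nu$ from the start and never touching $M_{\mu_1}$. In short, you correctly identified where $x_b$ comes from, but the comparison $w^{-1}x_b\sigma(w)\leq x$ requires the degeneration argument and a carefully chosen $w\in{}^M W$, neither of which appears in your outline.
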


\begin{proof}
Let $P=BM$ be the standard parabolic subgroup of $G$ with Levi component $M$, and let $N$ be its unipotent radical. We fix a \bsh element $y\in \widetilde{W}_M$. Let $g\in G(L)$ with $g^{-1}y\sigma(g)=b \in IxI$. Using the Iwasawa decomposition and the Bruhat decomposition we write $g=nmi_1wi_2$ with $n\in N(L)$, $m\in M(L)$, $i_1,i_2\in I$, and $w\in W$. By the Iwahori decomposition, $i_1\in P(\mathcal{O})K_1$. As $w^{-1}K_1w\subseteq I$, we may assume that $i_1=\id$. Furthermore we can replace $g$ by $gi_2^{-1}$ without changing the property $g^{-1}y\sigma(g)\in IxI$. Thus we may assume that $g=nmw$ with $g^{-1}y\sigma(g)\in Ix I$. Finally we may assume that $w$ is of minimal length in its coset $W_{M}w$. 

The next step is to show that we may assume that $n=1$, i.e.~that $w^{-1}m^{-1}y\sigma(mw)\in \overline{IxI}$. We have
\begin{equation}\label{glg}
g^{-1}y\sigma(g)=w^{-1}m^{-1}\left[n^{-1}y\sigma(n)y^{-1}\right]y\sigma(mw).
\end{equation}
We abbreviate the expression in the bracket, which is in $N(L)$, by $\tilde{n}$. We want to construct a family of elements of $\overline{IxI}$ over $\mathbb{A}^1_k$ such that its fiber over $1$ is $g^{-1}y\sigma(g)$, and that the fiber over $0$ is $w^{-1}m^{-1}y\sigma(mw).$ Let $LN$ be the loop group associated with $N$ over $k$, i.~e. the group ind-scheme representing the functor on $k$-algebras $R\mapsto N(R((t)))$. Let $\chi\in X_*(\T)$ be central in $M$ and such that $\langle\alpha,\chi\rangle>0$ for every simple root $\alpha$ of $T$ in $N$. Let
\begin{eqnarray*}
\phi: \mathbb{A}_{k}^1\setminus\{0\} &\rightarrow &LN\\
a&\mapsto& \chi(a)\tilde{n}\chi(a)^{-1}.
\end{eqnarray*}
Let $\alpha$ be a root of $\T$ in $N$ and let $U_{\alpha}$ denote the corresponding root subgroup. Conjugation by $\chi(a)$ maps $U_{\alpha}(y)$ to $U_{\alpha}(a^jy)$ where $j=\langle\alpha,\chi\rangle>0$. Especially, $\phi$ has an extension to a morphism $\phi:\mathbb{A}_k^1\rightarrow LN$ that maps $0$ to $\id$. As $\chi(a)$ is central in $M$, 
$$
w^{-1}m^{-1}\phi(a)y\sigma(mw)=(w^{-1}\chi(a)w)w^{-1}m^{-1}\tilde{n}y\sigma(mw)(\sigma(w)^{-1}\chi(a)^{-1}\sigma(w))
$$
for every $a\neq 0$. Using \eqref{glg}, we obtain that this is in $Ix I$. Hence $$w^{-1}m^{-1}\phi(0)y\sigma(mw)=w^{-1}m^{-1}y\sigma(mw)\in \overline{IxI}.$$ 

It remains to show that $w^{-1}m^{-1}y\sigma(mw)\in \overline{IxI}$ implies that $w^{-1}x_b\sigma(w)\in \overline{IxI}$ for some \bsh element $x_b$. Let $I_{M}=I\cap M(L)$. The minimality property of $w$ implies that for any positive root $\alpha$ of $\T$ in $M$ the root $\beta$ with $w^{-1}U_{\alpha}w=U_{\beta}$ is also positive (although not necessarily in $M$). As $I$ and $M$ are defined over $\mathcal{O}_F$, the same holds for $\sigma(w)$. Thus $$w^{-1}I_{M}m^{-1}y\sigma(m)I_{M}\sigma(w)\subseteq Iw^{-1}m^{-1}y\sigma(mw)I\subseteq \overline{IxI}.$$ Using the Cartan decomposition for $M$ we have $m^{-1}y\sigma(m)\in M(\mathcal{O}_L)\epsilon^{\mu'}M(\mathcal{O}_L)$ for some $M$-dominant $\mu'\in X_*(T)$. Let $x_b\in \Omega_M\subseteq\widetilde{W}_M$ be the unique element whose image under the projection $pr_M:\widetilde{W}_M\rightarrow\pi_1(M)$ agrees with the image of $\mu'$. In particular, this implies that $\kappa_M(x_b)=\kappa_M(m^{-1}y\sigma(m))=\kappa_M(y)\in \pi_1(M)_{\Gamma}$. As $x_b$ is basic in $M$, the $M$-dominant Newton polygons of $x_b$ and $y$ agree. Thus $x_b$ is a $[b]$-short element. As $x_b\in\Omega_M$ we have that $I_{M}x_bI_{M}$ is the unique closed $I_{M}$-double coset of the form $I_{M}hI_{M}$ with $h\in \widetilde{W}_M$ and $pr_M(h)=pr_M(x_b)$. It is contained in the closure of any other such double coset. Applying this to $I_{M}m^{-1}y\sigma(m)I_{M}$ we obtain $$w^{-1}I_{M}x_bI_{M}\sigma(w)\subseteq \overline{IxI}.$$ This implies $w^{-1}x_b\sigma(w)\in\overline{IxI}$.
\end{proof}

If $G$ is split, the first (and largest) part of the proof of Proposition \ref{propmain} can also be deduced from a non-emptiness criterion by G\"ortz, Haines, Kottwitz, and Reuman, \cite{GHKR2}, Corollary 12.1.2 using the relation between \sh elements and fundamental alcoves in Lemma \ref{lemfundshort}. 

\begin{kor}\label{kormain}
Let $[b_x]$ be the generic $\sigma$-conjugacy class in $IxI$ for some $x\in \widetilde{W}$. Then $[b_x]$ is the unique largest (with respect to the order described in the introduction) among the classes $[y]$ where $y\in\widetilde{W}$ with $y\leq x$ in the Bruhat order. It is also equal to the largest among the $[y]$ where $y\leq x$ is in addition of the form $y=w^{-1}z\sigma(w)$ where $z$ is $[y]$-\sh and where $w\in {}^{M_{y}}W$ for the centralizer $M_y$ of the dominant Newton point of $y$.
\end{kor}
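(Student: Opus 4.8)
\textbf{Plan for the proof of Corollary \ref{kormain}.}

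The plan is to combine Proposition \ref{propmain} with the Grothendieck specialization theorem of \cite{RapoportRichartz}. First I would recall from Remark \ref{remit} (applied via Theorem \ref{propdef}(2), or rather directly: the closure $\overline{IxI}$ is a union of cosets $Ix'I$ with $x'\leq x$) that the generic $\sigma$-conjugacy class $[b_x]$ in $IxI$ is, by \cite{RapoportRichartz}, Theorem 3.6, the largest class with respect to $\preceq$ whose intersection with $IxI$ is non-empty; equivalently, the largest class meeting $\overline{IxI}=\coprod_{x'\leq x}Ix'I$. Since every $y\in\widetilde W$ with $y\leq x$ satisfies $IyI\subseteq\overline{IxI}$ and every $y'\in IyI$ has $[y']=[y]$ (the element $y\in\widetilde W\subseteq G(L)$ itself lies in its coset), the classes occurring in $\overline{IxI}$ are exactly the $[y]$ with $y\leq x$ together with the generic classes of the various $IyI$; but each of those generic classes is again of the form $[y'']$ for a suitable $y''$ appearing in a smaller closure, so after unravelling one sees that $[b_x]=\max_{\preceq}\{[y]\mid y\in\widetilde W,\ y\leq x\}$. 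This establishes the first assertion; uniqueness of the maximal element follows since $\preceq$ is a partial order and a maximum, once it exists, is unique.

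For the second assertion I would argue that restricting the set $\{y\mid y\leq x\}$ to those $y$ of the special form $y=w^{-1}z\sigma(w)$ with $z$ being $[y]$-short and $w\in{}^{M_y}W$ does not lower the maximum. The point is that the maximum $[b_x]$ is \emph{attained}: applying Proposition \ref{propmain} to an element $b\in IxI$ representing the generic class, we obtain a $[b_x]$-short element $x_{b_x}$ and a $w\in{}^{M}W$ (for $M$ the centralizer of the dominant Newton point of $b_x$) with $y_0:=w^{-1}x_{b_x}\sigma(w)\leq x$. By the Remark following Definition \ref{defshort} the short element $x_{b_x}$ lies in $[b_x]$, hence so does $y_0$, so $[y_0]=[b_x]$; and $M$ is precisely the centralizer $M_{y_0}$ of the dominant Newton point of $y_0$. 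Thus $y_0$ is an element of the restricted set with $[y_0]=[b_x]$, which together with the trivial inclusion of the restricted set into $\{y\mid y\leq x\}$ forces the two maxima to coincide.

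The main obstacle is the first assertion: one must be careful that the generic class of a sub-coset $IyI\subseteq\overline{IxI}$ is genuinely realized by a class $[y'']$ with $y''\leq x$ in $\widetilde W$, so that the supremum over all classes occurring in $\overline{IxI}$ really equals the supremum over the discrete set $\{[y]\mid y\leq x\}$ — this is where \cite{RapoportRichartz}, Theorem 3.6 is used, guaranteeing that the generic class over any irreducible piece is the $\preceq$-largest class meeting it, and that largest class is itself realized at a point, hence (running the argument for that point) one never needs classes outside $\{[y]\mid y\leq x\}$. Once that bookkeeping is in place, the rest is a formal consequence of Proposition \ref{propmain} and the fact that $\preceq$ is a partial order.
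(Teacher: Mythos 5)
Your proposal uses the same two ingredients as the paper's proof: the Grothendieck specialization theorem (\cite{RapoportRichartz}, Theorem 3.6) gives $[y]\preceq[b_x]$ for every $y\leq x$ since $y\in\overline{IxI}$, and Proposition \ref{propmain} applied to a representative of $[b_x]$ in $IxI$ exhibits a $y_0=w^{-1}x_b\sigma(w)\leq x$ of the special form with $[y_0]=[b_x]$, which simultaneously settles both assertions. Your second paragraph carries this out correctly.

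One caution about your first paragraph: the parenthetical claim that ``every $y'\in IyI$ has $[y']=[y]$'' is false — an Iwahori double coset generically meets many $\sigma$-conjugacy classes (this is exactly what makes Proposition \ref{propmain} nontrivial), and indeed you contradict the claim yourself one clause later by mentioning ``the generic classes of the various $IyI$.'' The ``unravelling'' discussion built on that sentence is therefore not a valid route; what actually makes $\max\{[y]\mid y\leq x\}$ attain $[b_x]$ is precisely the element $y_0$ you produce in the second paragraph. In short, drop the incorrect intermediate claim and the ``unravelling'' — the combination of Grothendieck specialization with the single application of Proposition \ref{propmain} in your second paragraph is already the whole argument, and it coincides with the paper's.
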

\begin{proof}
The generic $\sigma$-conjugacy classes of $IxI$ and $\overline{IxI}$ coincide. By the Grothendieck specialization theorem \cite{RapoportRichartz}, Theorem 3.6, the generic class of $\overline{IxI}$ is the unique largest (with respect to the order described in the introduction) among the classes $[g]$ with $g\in \overline{IxI}$. Hence the assertion follows from Proposition \ref{propmain}.
\end{proof}

\begin{proof}[Proof of Theorem \ref{thmmain} and Corollary \ref{corgennp}]
Theorem \ref{thmmain} and Corollary \ref{corgennp} follow from Proposition \ref{propmain} and Corollary \ref{kormain} by Remark \ref{remit}.
\end{proof}

\begin{prop}\label{propcentralstream}
Let $(w,\mu)$  be the truncation type of a \bsh element for some $\sigma$-conjugacy class $[b]$. If a $\sigma$-conjugacy class $[b']$ is contained in $\overline{[b]}$ then there exists a $[b']$-short element $x'$ such that $S_{w',\mu'}\subseteq \overline{S_{w,\mu}}$ where $(w',\mu')=\tr(x')$. If $[b]=[b_{w\tau_{\mu}}]$ then the converse also holds. This is in particular always the case if $G$ is split.
\end{prop}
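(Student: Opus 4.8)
The plan is to transfer closure relations between $\sigma$-conjugacy classes into closure relations between truncation strata by using \bsh elements as representatives, and then to use Corollary \ref{corclosure} (or rather Theorem \ref{thmclos}) together with the characterization of generic classes in Corollary \ref{corgennp}. First I would note that since $x$ is a \bsh element for $[b]$, it lies in $[b]$, hence $x\in S_{w,\mu}$ and in fact $x\in S_{w,\mu}(k)$; so $[b]$ meets $S_{w,\mu}$. Now suppose $[b']\subseteq \overline{[b]}$. By the Grothendieck specialization theorem (\cite{RapoportRichartz}, Theorem 3.6, in the form of Remark \ref{remit}) the closure $\overline{[b]}$ in $LG$ is the union of all $\sigma$-conjugacy classes $\preceq [b]$; in particular $[b']\preceq[b]$. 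I want to produce a \bsh element $x'$ of $[b']$ that lies in $\overline{S_{w,\mu}}$. The idea is that $\overline{S_{w,\mu}}$ is $K$-$\sigma$-conjugation invariant and closed (Lemma \ref{thmclosure}), so it suffices to show $[b']$ meets $\overline{S_{w,\mu}}$; then by Theorem \ref{thmmain} applied to a suitable element, or directly by Lemma \ref{lemshort} choosing the (essentially unique) \bsh element $x'$ of $[b']$ and checking it lies in $\overline{S_{w,\mu}}$, we conclude. The cleanest route: pick the \bsh element $x'$ of $[b']$ as in Lemma \ref{lemshort}; since $[b']\preceq[b]$ and $x\in[b]$, the class $[b']$ lies in $\overline{[b]}\subseteq\overline{S_{w,\mu}}$ (the last inclusion because $S_{w,\mu}\supseteq\{x\}$ with $x$ generating $[b]$ under $\sigma$-conjugacy... wait, this needs care), so $x'\in\overline{[b']}\subseteq\overline{[b]}\subseteq \overline{S_{w,\mu}}$, whence $S_{w',\mu'}\subseteq\overline{S_{w,\mu}}$ where $(w',\mu')=\tr(x')$, because truncation strata in a closed $K$-$\sigma$-invariant admissible set decompose it.

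For the delicate point just flagged: $\overline{[b]}\subseteq\overline{S_{w,\mu}}$ need not hold merely because one element $x\in[b]$ lies in $S_{w,\mu}$, since $[b]$ meets many truncation strata. What is true is that $[b]\cap S_{w,\mu}\ne\emptyset$, hence $[b']\cap S_{w,\mu}$ could still be empty. So instead I would argue: $[b']\preceq[b]$ means $[b']\subseteq\overline{[b]}$; choose any $g\in[b']\cap\overline{S_{w,\mu}}$ — such $g$ exists because... hmm. Let me reorganize. The honest approach is: by Theorem \ref{thmmain} there is a \bsh element $x_{b'}\in\overline{S_{w',\mu'}}$ for any stratum containing $b'$, but I need the reverse direction. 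The right statement to invoke is that $\overline{S_{w,\mu}}$ is closed and $K$-$\sigma$-stable, and $\overline{[b]}\subseteq LG$ is the closure of a single $K$-$\sigma$-conjugacy-invariant set; since $\overline{S_{w,\mu}}$ contains the point $x\in[b]$ and is closed, it contains $\overline{\{g^{-1}x\sigma(g)\mid g\in G(L)\}}=\overline{[b]}$ provided $\overline{S_{w,\mu}}$ is invariant under full $G(L)$-$\sigma$-conjugation — but it is only $K$-$\sigma$-invariant, not $G(L)$-$\sigma$-invariant. This is exactly the main obstacle. To get around it, I would instead use Theorem \ref{thmclos}: $S_{w',\mu'}\subseteq\overline{S_{w,\mu}}$ iff $\tilde w w'\tau_{\mu'}\sigma(\tilde w)^{-1}\le w\tau_\mu$ for some $\tilde w\in W$, and combine with the fact (Corollary \ref{corgennp}) that $[b_{w\tau_\mu}]$ is the maximal class among $[y]$, $y\le w\tau_\mu$. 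When $[b]=[b_{w\tau_\mu}]$, any $[b']\preceq[b]$ then has $[b']=[y]$ for some $y\le w\tau_\mu$ with $y$ of the special form $w^{-1}z\sigma(w)$, $z$ \sh, by the second half of Corollary \ref{kormain}; taking $z$ the \bsh element of $[b']$ and $(w',\mu')=\tr(z)$, Lemma \ref{lemhe} converts $w^{-1}z\sigma(w)\le w\tau_\mu$ into the Bruhat condition of Theorem \ref{thmclos}, giving $S_{w',\mu'}\subseteq\overline{S_{w,\mu}}$.

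For the converse (under the hypothesis $[b]=[b_{w\tau_\mu}]$): suppose $x'$ is $[b']$-\sh with $S_{w',\mu'}\subseteq\overline{S_{w,\mu}}$, $(w',\mu')=\tr(x')$. By Theorem \ref{thmclos} there is $\tilde w\in W$ with $\tilde w w'\tau_{\mu'}\sigma(\tilde w)^{-1}\le w\tau_\mu$. Since $x'\in S_{w',\mu'}$, its class $[b']=[x']$ is among the $[y]$ with $y\le w\tau_\mu$ (taking $y=\tilde w w'\tau_{\mu'}\sigma(\tilde w)^{-1}$, whose class equals $[x']$ as $\sigma$-conjugation by a lift of $\tilde w\in K$ preserves the class). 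By Corollary \ref{corgennp}/\ref{kormain}, $[b_{w\tau_\mu}]$ is the maximal such class, so $[b']\preceq[b_{w\tau_\mu}]=[b]$, i.e.\ $[b']\subseteq\overline{[b]}$. Finally, the case of split $G$ is always of this form because then the \bsh element of $[b]$ is unique (Lemma \ref{lemshort}), and one checks that the truncation type $(w,\mu)$ of this unique \bsh element satisfies $[b]=[b_{w\tau_\mu}]$: indeed $x\in S_{w,\mu}$ forces $[b]\preceq[b_{w\tau_\mu}]$, while the maximality in Corollary \ref{corgennp} combined with $x$ being \bsh (hence of length $0$ in $\widetilde W_{M_\nu}$, so "as small as possible" for its Newton and Kottwitz invariants) forces equality. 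The main obstacle throughout is precisely this last identification — showing that the truncation stratum of the \bsh element is the one whose generic class is $[b]$ itself — and bridging the gap between $K$-$\sigma$-invariance of strata and full $G(L)$-$\sigma$-invariance of $\sigma$-conjugacy classes; I expect this is handled by the Bruhat-order reformulations (Theorem \ref{thmclos}, Corollary \ref{corgennp}, Lemma \ref{lemhe}) rather than by any geometric degeneration argument.
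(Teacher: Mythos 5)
Your proposal correctly identifies the central difficulty — that truncation strata are only $K$-$\sigma$-invariant while $\sigma$-conjugacy classes are $G(L)$-$\sigma$-invariant — but the way you resolve it does not work, and the resulting argument has several gaps.

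\textbf{First direction.} You end up proving the first claim only under the extra hypothesis $[b]=[b_{w\tau_\mu}]$, which the proposition does \emph{not} assume there: the first direction must hold whenever $(w,\mu)$ is merely the truncation type of some $[b]$-short element. Even granting that hypothesis, the step via Corollary \ref{kormain} is a misreading: the corollary says that $[b_{w\tau_\mu}]$ is the \emph{maximum} of the classes $[y]$ with $y\leq w\tau_\mu$ (or with $y$ of the special form $w^{-1}z\sigma(w)$); it does not say that every class $\preceq[b_{w\tau_\mu}]$ is of the form $[y]$ for such a $y$. So you cannot conclude that $[b']\subseteq\overline{[b]}$ gives a $y\leq w\tau_\mu$ of the special form with $[y]=[b']$. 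You explicitly wrote ``I expect this is handled by the Bruhat-order reformulations \dots rather than by any geometric degeneration argument,'' but it is the other way round. The paper's proof of the first direction is essentially geometric: pick $g\in G(k[[z]]((t)))$ with $g_{k((z))}\in[b]$ and $g_k\in[b']$ (this is where $[b']\subseteq\overline{[b]}$ is used); since $[b]$ meets $Iw\tau_\mu I$, there is $h$ over $k((z))^{\mathrm{alg}}$ with $h^{-1}g_{k((z))}\sigma(h)\in Iw\tau_\mu I$; using properness of the closed Schubert cell in $LG/I$ one arranges $h$ to come from $G(k[[z]]((t)))$ after a finite ramified base change; then $\tilde g=h^{-1}g\sigma(h)$ is a $k[[z]]$-family with generic fiber in $S_{w,\mu}$ and special fiber in $[b']$, so $\overline{S_{w,\mu}}\cap[b']\neq\emptyset$, and one finishes by applying Theorem \ref{thmmain} to this special fiber. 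This lifting-of-the-conjugating-element step is the missing idea in your proposal; it cannot be replaced by Bruhat-order combinatorics alone.

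\textbf{Converse.} Two problems. First, you set $y=\tilde w w'\tau_{\mu'}\sigma(\tilde w)^{-1}$ and claim $[y]=[x']$ because ``$\sigma$-conjugation by a lift of $\tilde w\in K$ preserves the class.'' That shows $[y]=[w'\tau_{\mu'}]$, not $[y]=[x']$: the element $x'$ and the element $w'\tau_{\mu'}$ share the same \emph{truncation type} $(w',\mu')$, which means they are $K$-$\sigma$-conjugate modulo $K_1$ on both sides, but this does not make them $\sigma$-conjugate — indeed $[w'\tau_{\mu'}]$ is the generic class of $S_{w',\mu'}$, which is typically strictly larger than $[x']=[b']$. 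Second, even if you could reach $[b']\preceq[b]$, you then write ``i.e.\ $[b']\subseteq\overline{[b]}$,'' but as the paper notes immediately after the proposition, $\preceq$ is only \emph{necessary} for closure containment in general; sufficiency is only known for split $G$ (\cite{grothconj}). The paper's converse avoids both issues: $x'\in S_{w',\mu'}\subseteq\overline{S_{w,\mu}}$, and under $[b]=[b_{w\tau_\mu}]$ the dense open subset $S_{w,\mu}\cap[b]$ has closure $\overline{S_{w,\mu}}$, so $\overline{S_{w,\mu}}\subseteq\overline{[b]}$; therefore $x'\in\overline{[b]}$, and since $\overline{[b]}$ is a union of $\sigma$-conjugacy classes this gives $[b']\subseteq\overline{[b]}$ directly, with no appeal to $\preceq$.

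\textbf{Split case.} Your sketch (``forces equality'' by vague maximality) is not a proof. The paper's argument is to apply the already-proved first direction with the roles of $[b]$ and $[b']=[b_{w\tau_\mu}]$ interchanged, using uniqueness of short elements for split $G$ to identify the resulting stratum with $S_{w,\mu}$, and to combine this with $S_{w',\mu'}\subseteq\overline{S_{w,\mu}}$ (from Remark \ref{remit} and Theorem \ref{thmmain}) to force $(w',\mu')=(w,\mu)$ and hence $[b]=[b']$.
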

The closure of $[b]$ is a union of $\sigma$-conjugacy classes. By \cite{RapoportRichartz}, Theorem 3.6 a necessary condition for $[b']\subseteq\overline{[b]}$ is that $[b']\preceq[b]$, i.e.~$\kappa_G(b)=\kappa_G(b')$ and $\nu_{b'}\preceq\nu_{b}$. In \cite{grothconj} it is shown that for split $G$ this condition is also sufficient.

\begin{proof}
Assume that $[b']\subseteq\overline{[b]}$. Let $g\in G(k[[z]]((t)))$ such that $g_{k((z))}\in [b]$ and $g_k\in [b']$. Let $h\in G(k((z))^{\rm alg}((t)))$ with $h^{-1}g_{k((z))}\sigma(h)\in Iw\tau_{\mu}I$. Here $k((z))^{\rm alg}$ denotes an algebraic closure of $k((z))$. The closed Schubert cell in $LG/I$ containing $h$ is a scheme of finite type. Thus replacing $h$ by some representative of $hI_{k((z))^{\rm alg}}$ we may assume that $h$ is defined over a finite extension of $k((z))$. Replacing $k[[z]]$ by its integral closure in that extension we may assume $h\in G(k((z))((t)))$. Also, as the closed Schubert cell is a proper subscheme of $LG/I$, the class $hI$ contains an element of $LG/I(k[[z]])$. As $k[[z]]$ is local for the \'etale topology, \cite{foliat}, Lemma 2.3 shows that we obtain an element of $LG(k[[z]])=G(k[[z]]((t)))$ in the inverse image. We denote this element again by $h$. Then $\tilde g=h^{-1}g\sigma(h)\in G(k[[z]]((t)))$ with $\tilde g_{k((z))}\in Iw\tau_{\mu}I\subseteq S_{w,\mu}$ and $\tilde g_k\in [b']$. Hence $\overline{S_{w,\mu}}$ contains an element of $[b']$ and thus by Theorem \ref{thmmain} also some stratum $S_{w',\mu'}$.

Let now $[b]=[b_{w\tau_{\mu}}]$ and assume that there exists a $[b']$-short element $x'$ such that $S_{w',\mu'}\subseteq \overline{S_{w,\mu}}$. Then $[b']\cap \overline{[b]}\neq\emptyset$, hence $[b']\subseteq \overline{[b]}$. 

It remains to show that for split $G$ we always have $[b]=[b_{w\tau_{\mu}}]$. Let $[b']=[b_{w\tau_{\mu}}]$. Then $[b]\cap \overline{[b']}\neq\emptyset$, hence $[b]\subseteq \overline{[b']}$. Let $(w',\mu')$ be the truncation type of the unique $[b']$-short element (compare Lemma \ref{lemshort}). Then by the first assertion of this proposition, we have $S_{w,\mu}\subseteq \overline{S_{w',\mu'}}$. On the other hand, $[b']\cap Iw\tau_{\mu}I\neq\emptyset$, thus by Proposition \ref{propmain}  $S_{w',\mu'}\subseteq \overline{S_{w,\mu}}$. Thus $w=w'$, $\mu=\mu'$, and $[b]=[b']$.
\end{proof}

\begin{remark}
Essentially the same proof also shows the following statement. Let $b,b'\in G(L)$ and let $x\in \widetilde{W}$ such that $IxI\subseteq [b]$ (for example a $P$-fundamental alcove contained in $[b]$ as in Theorem \ref{thmexfundalc}). Then $[b']\subseteq\overline{[b]}$ if and only if $[b']\cap \overline{IxI}\neq\emptyset$. 
\end{remark}

\section{Comparison between the arithmetic case and the function field case}\label{secfundalc}
In this section we consider both cases $L=k((t))$ and $L=\Quot (W(k))$, and compare between them.
\begin{definition}\label{defpfund}
\begin{enumerate}
\item For $x\in G(L)$ let $\phi_x:G(L)\rightarrow G(L)$ with $g\mapsto \sigma(xgx^{-1})$.
\item Let $P$ be a semistandard parabolic subgroup of $G$, i.e.~a parabolic subgroup containing $\T$ but not necessarily $B$. Let $N$ be its unipotent radical and $M$ the Levi factor containing $\T$. Let $\overline{N}$ be the unipotent radical of the opposite parabolic. Then an element $x\in \widetilde{W}$ is called $P$-fundamental if $\phi_x(I_M)=I_M$, $\phi_x(I_N)\subseteq I_N$, and $\phi_x(I_{\overline{N}})\supseteq I_{\overline{N}}$.
\end{enumerate}\end{definition}
This definition is a generalization of G\"ortz, Haines, Kottwitz, and Reuman's notion of fundamental $P$-alcoves for split groups from \cite{GHKR2}, 13. Also, Lemma \ref{remoortmain}, Theorem \ref{thmexfundalc} and Proposition \ref{lemsuperset}(1) are generalizations to unramified groups of corresponding results of \cite{GHKR2}. However, for our main theorem in this context (Theorem \ref{thmexfundalc}) one needs a different proof than the one used for split groups. 

\begin{remark}\label{remnuM}
Let $x\in\widetilde{W}$. Let $r>0$ be such that $G$ is split over an unramified extension of $\mathcal{O}_F$ of degree $r$. Hence $\sigma^r$ acts trivially on $\widetilde W$. Let $x':=\sigma(x)\sigma^2(x)\dotsm\sigma^r(x)$. Let $P=MN$ be a semistandard parabolic subgroup with $\phi_x(P)=P$. Then $x'P(x')^{-1}=\phi_x^r(P)=P$, hence $x'\in \widetilde W_M$. We denote the $M$-dominant Newton point of the $\sigma^r$-conjugacy class of $x'$ by $\nu_{r,M}$.
\end{remark}
\begin{lemma}\label{lemnew}
Let $x\in [b]\cap \widetilde{W}$ be $P$-fundamental for some $P=MN$. Let $r$ and $x'$ be as in Remark \ref{remnuM}. Then $x$ is $Q$-fundamental for a semistandard parabolic $Q=M_QN_Q$ if and only if
\begin{enumerate}
\item[(i)]$\phi_x(Q)=Q$,
\item[(ii)] $\nu_{r,M}$ is central in $M_Q$ and
\item[(iii)] $\langle \nu_{r,M},\alpha\rangle\geq 0$ for each root $\alpha$ of $T$ in $N_Q$.
\end{enumerate}
\end{lemma}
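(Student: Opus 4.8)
The plan is to unwind the definition of $Q$-fundamental (Definition \ref{defpfund}(2)) into its three constituent conditions on $I_{M_Q}$, $I_{N_Q}$, $I_{\overline{N}_Q}$, and to show that, under the standing assumption that $x$ is already $P$-fundamental, these are governed entirely by the Newton point $\nu_{r,M}$ computed in Remark \ref{remnuM}. First I would observe that condition (i), $\phi_x(Q)=Q$, is clearly necessary: it is implied by each of $\phi_x(I_{N_Q})\subseteq I_{N_Q}$ and $\phi_x(I_{\overline{N}_Q})\supseteq I_{\overline{N}_Q}$ together with $\phi_x(I_{M_Q})=I_{M_Q}$, since the $\phi_x$-image of a parabolic is again a parabolic and these containments pin down its unipotent radical and Levi. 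So from now on I assume (i) and must compare the remaining containments with (ii) and (iii). The key structural input is that $\phi_x$ acts on the building/on $\widetilde{W}_M$ through the finite-order-up-to-translation element $x$; after passing to the $r$-th iterate, $\phi_x^r$ is conjugation by $x'$ composed with $\sigma^r=\id$ on $\widetilde{W}$, so $\phi_x^r$ is $\mathrm{Ad}(x')$ and $x'\in\widetilde{W}_M$ by Remark \ref{remnuM}. Thus the relevant question becomes: for a root subgroup $U_\alpha$ with $\alpha$ a root of $T$ in $N_Q$, does $\phi_x$ map $U_\alpha(\mathcal{O})$ into $I$?

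The mechanism I would use is the one already exploited in Section \ref{sec25} and in the proof of Proposition \ref{propmain}: conjugation of a root subgroup element $U_\alpha(c)$ by a translation part shifts the valuation by the pairing of $\alpha$ with that cocharacter. Concretely, write $x=v\epsilon^\lambda$ in $\widetilde W\cong W\ltimes X_*(T)$; then $\phi_x$ sends $U_\alpha(c)$ essentially to $U_{\sigma(v(\alpha))}\bigl(\epsilon^{\langle v(\alpha),\sigma(\lambda)\rangle}(\text{unit}\cdot c)\bigr)$ up to the $M$-part, and iterating $r$ times and keeping track of the accumulated translation yields a shift by $\langle\alpha',\nu_{r,M}\rangle$ where $\alpha'$ is the $\phi_x^r=\mathrm{Ad}(x')$-translate of $\alpha$ (which, under (i), stays a root in $N_Q$ since $Q$ is $\phi_x$-stable, and $\nu_{r,M}$ depends only on $x'$ up to $M$-conjugacy). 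So $\phi_x(I_{N_Q})\subseteq I_{N_Q}$ holds exactly when each such shift is $\geq 0$, i.e. $\langle\alpha,\nu_{r,M}\rangle\ge 0$ for all roots $\alpha$ of $T$ in $N_Q$ — this is (iii); and by the symmetric computation on the opposite unipotent radical $\overline{N}_Q$, $\phi_x(I_{\overline{N}_Q})\supseteq I_{\overline{N}_Q}$ holds under the same inequalities. Finally $\phi_x(I_{M_Q})=I_{M_Q}$: using that $x$ is $P$-fundamental so $\phi_x(I_M)=I_M$, and that (i) gives $\phi_x(M_Q)=M_Q$, the condition reduces to $\phi_x$ preserving $I_{M_Q}$; the obstruction to this is precisely a nonzero translation part of $\phi_x$ acting on the roots of $M_Q$, which by the same valuation-shift bookkeeping vanishes iff $\nu_{r,M}$ pairs to zero with every root of $T$ in $M_Q$, i.e. $\nu_{r,M}$ is central in $M_Q$ — this is (ii). Conversely, assuming (i), (ii), (iii), running the same computations shows all three defining containments of $Q$-fundamentality, where the equality for $I_{M_Q}$ uses centrality (ii) to guarantee $\phi_x$ is length-preserving on $W_{M_Q}$, hence bijective on $I_{M_Q}$.

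For the bookkeeping to be clean one should reduce to the $r$-th power: I would first prove the statement with $\phi_x$ replaced by $\phi_x^r=\mathrm{Ad}(x')$, where everything is literally about valuations of matrix entries under conjugation by the fixed element $x'\in\widetilde W_M$ and the pairings $\langle\cdot,\nu_{r,M}\rangle$ are by definition the growth rates of these valuations; then descend back to $\phi_x$ by noting that a $\phi_x$-stable lattice chain is the same as a $\phi_x^r$-stable one compatible with the intermediate $\phi_x$-images (using that $x$ is $P$-fundamental to control those intermediate steps), so that $\phi_x(I_?)\subseteq I_?$ for $?\in\{N_Q,\overline N_Q\}$ and $\phi_x(I_{M_Q})=I_{M_Q}$ are detected by the $r$-th iterate together with (i).

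\textbf{Main obstacle.} The hard part will be the $I_{M_Q}$-equality, both directions, and in particular arguing cleanly that $\phi_x(I_{M_Q})=I_{M_Q}$ (not merely $\subseteq$) forces $\nu_{r,M}$ to be \emph{central} in $M_Q$, and conversely: one must rule out that $\phi_x$ permutes the affine root hyperplanes through $I_{M_Q}$ nontrivially while still preserving $I_{M_Q}$ as a group — this is where the passage to $\phi_x^r$ and the already-known $P$-fundamentality of $x$ (which controls how $\phi_x$ interacts with $I_M\supseteq I_{M_Q}$) must be combined carefully. The $N_Q$/$\overline N_Q$ inequalities (ii)$\Leftrightarrow$ containments are comparatively routine once the valuation-shift formula is set up, since they are simply sign conditions on the pairings $\langle\alpha,\nu_{r,M}\rangle$.
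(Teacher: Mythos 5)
Your overall plan points in the right direction, and you correctly identify $\nu_{r,M}$ as the governing invariant and $\phi_x^r=\mathrm{Ad}(x')$ as the key mechanism. But there is a genuine gap in the step you call ``descend back to $\phi_x$,'' and it is not merely bookkeeping. The valuation-shift formula $\langle\alpha,\nu_{r,M}\rangle$ describes the accumulated effect of $\phi_x^{rr'}$ (where $r'$ is chosen so that $(x')^{r'}\in X_*(T)$, hence equals $rr'\nu_{r,M}$); a single application of $\phi_x$ has no intrinsic shift by this quantity, and condition (iii) does not directly say anything about one step. The issue is sharpest for roots $\alpha\in N_Q$ with $\langle\alpha,\nu_{r,M}\rangle=0$ that lie in $\overline N$ (such roots exist for general semistandard $Q$): there a single application of $\phi_x$ could a priori push $I\cap U_\alpha$ \emph{outside} of $I$, since $P$-fundamentality gives $\phi_x(I_{\overline N})\supseteq I_{\overline N}$, an enlargement. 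What rescues the argument is a chain-of-containments observation that you never make explicit: $P$-fundamentality forces the chain $\phi_x^i(I_{\overline N\cap M'})$ to be monotonically increasing inside $\overline N$, and the vanishing of $\langle\cdot,\nu_{r,M}\rangle$ forces $\phi_x^{rr'}(I_{\overline N\cap M'})=I_{\overline N\cap M'}$, which then collapses every intermediate inclusion to an equality. The symmetric monotone-decreasing argument handles $N\cap M'$. This is precisely how the paper's proof handles the Levi containment, via the decomposition $I_{M'}=I_M\,I_{N\cap M'}\,I_{\overline N\cap M'}$. Your ``Main obstacle'' paragraph locates the difficulty in the wrong place: the $I_{M_Q}$ equality is handled by exactly the same mechanism, not a harder one; the real crux is this chain argument controlling intermediate steps for roots with zero pairing, and your sketch conflates this with the (easy) case of roots with strictly positive pairing.

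A second, more structural difference: the paper first proves the single case $Q=P'$ where $M'$ is the centralizer of $\nu_{r,M}$ and $P'=M'N$, and then derives the ``if'' direction for arbitrary $Q$ from the observation that (ii)--(iii) force $Q\subseteq P'$, after which $Q$-fundamentality follows formally from $P'$-fundamentality. You work with a general $Q$ directly, which multiplies the case distinctions you need among roots of $N_Q$ (in $M$, in $N\setminus M'$, in $N\cap M'$, in $\overline N\cap M'$); this is doable but substantially messier, and without the explicit chain argument above the ``if'' direction is not actually established. Finally, for the ``only if'' direction note that $\nu_{r,M_Q}=\nu_{r,M}$ as elements of $X_*(T)_{\mathbb{Q}}$ once one knows $x'\in\Omega_M$ and $x'\in\Omega_{M_Q}$ (both are the intrinsic slope cocharacter of $x'$), which is implicit in the lemma's formulation in terms of $\nu_{r,M}$; this identification should be stated rather than assumed.
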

\begin{proof}
Let $P=MN$ where $M$ is the Levi factor of $P$ containing $\T$. As $x$ is $P$-fundamental, $\phi_x$ stabilizes $M$ and $N$, hence (i) holds for $P$. Besides, $I_M=\phi_x^r(I_M)=x'I_M(x')^{-1}$. Hence $x'\in\Omega_M$, and the Newton point $\nu_{r,M}$ is central in $M$. Similarly, $\phi_x(I_N)\subseteq I_N$ implies condition (iii) for $P$. Indeed, let $r'>0$ be such that $(x')^{r'}\in X_*(T)\subseteq \widetilde W$. For example, $r'$ can be chosen to be the order of the factor in $W$ of $x'\in\widetilde W\cong W\ltimes X_*(T)$. Then $(x')^{r'}=r'\nu_{r,M}$, and (iii) follows using Section \ref{sec25}. To prove the converse we first consider a special case. Let $M'$ be the centralizer of $\nu_{r,M}$. Then $M\subseteq M'$. Let $P'$ be the parabolic subgroup generated by $M'$ and $N$. Let $N'$ be its unipotent radical. Then $\phi_x(P')=P'$, and $P'\supseteq P$ and $N'\subseteq N$. Thus in order to show that $x$ is $P'$-fundamental it is enough to verify $\phi_x(I_{M'})=I_{M'}$. We consider the decomposition $I_{M'}=I_MI_{N\cap M'}I_{\overline N\cap M'}$. Each of the subgroups $M,N\cap M',\overline N\cap M'$ is stable under $\phi_x$, so we can consider each factor of $I_{M'}$ separately. For $I_M$ the assertion is just the assumption. For $I_{N\cap M'}$ we have $\phi_x^i(I_{N\cap M'})\subseteq I_{N\cap M'}$ for every $i> 0$. For $i=rr'$ (with $r'$ as above) we have equality in the above containment. Indeed, $\phi_x^{rr'}(g)=\sigma^{rr'}((x')^{r'}g(x')^{-r'})$ and $(x')^{r'}\in X_*(T)$ with $\langle \alpha,(x')^{r'}\rangle=\langle \alpha,r'\nu_{r,M}\rangle=0$ for every root of $\T$ in $M'$. Considering the whole chain of containments we obtain equality for every $i$. A similar argument applies to $\overline N\cap M'$. It remains to show that if $Q \subseteq P'$ satisfies (i)--(iii), then $x$ is $Q$-fundamental, but this is obvious.
\end{proof}

\begin{lemma}\label{remoortmain}
If $x$ is $P$-fundamental then every element of $IxI$ is $I$-$\sigma$-conjugate to $x$.
\end{lemma}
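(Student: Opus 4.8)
The plan is to show that the $\sigma$-conjugation action of $I$ on the coset $IxI$, written as $i \cdot (uxv) = \phi$-twisted conjugate, can be simplified successively using the Iwahori factorization $I = I_N I_M I_{\overline N}$ relative to the parabolic $P = MN$ witnessing that $x$ is $P$-fundamental. First I would reduce: any element of $IxI$ is $I$-$\sigma$-conjugate to an element of $xI$ (conjugate away the left factor), so it suffices to show each element of $xI$ is $I$-$\sigma$-conjugate to $x$. Write the trailing factor as $v = v_N v_M v_{\overline N}$ with $v_N \in I_N$, $v_M \in I_M$, $v_{\overline N} \in I_{\overline N}$. Since $\phi_x(I_{\overline N}) \supseteq I_{\overline N}$, there is $w_{\overline N} \in I_{\overline N}$ with $\phi_x(w_{\overline N}) = v_{\overline N}$; $\sigma$-conjugating $x v_N v_M v_{\overline N}$ by $w_{\overline N}^{-1}$ multiplies on the right by $\sigma(w_{\overline N})$ and on the left by $w_{\overline N}^{-1}$, and one checks the result lies in $w_{\overline N}^{-1} x v_N v_M \cdot (x \sigma(w_{\overline N}) x^{-1}) x$, where the inserted factor $\phi_x(w_{\overline N}) = v_{\overline N}$ — so this step is really chosen to kill $v_{\overline N}$ at the cost of a new left $I_{\overline N}$-factor; then move that $I_{\overline N}$-factor to the right past $x$ using $x^{-1} I_{\overline N} x$ and $\phi_x$, reabsorbing it. The point is that after this we may assume $v \in I_N I_M$.

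Next, $\sigma$-conjugate by a suitable element of $I_M$: since $x \in \widetilde W_M$ and $\phi_x(I_M) = I_M$, conjugating $x v_N v_M$ by an element $w_M \in I_M$ gives $w_M^{-1} x v_N v_M \phi_x(w_M) = w_M^{-1} (x v_N x^{-1}) x v_M \phi_x(w_M)$ and, since $\phi_x$ is an isomorphism of $I_M$ onto itself, we can choose $w_M$ so that $v_M \phi_x(w_M) = w_M$, i.e. $v_M = w_M \phi_x(w_M)^{-1}$ — here I would invoke Lemma \ref{thmlang} (the Lang--Steinberg variant) applied to the finite-dimensional quotients $I_M/(I_M)_n$, whose connectedness and the fact that $\phi_x$ is a Frobenius-like endomorphism there make the map $w \mapsto w \phi_x(w)^{-1}$ surjective; a priori $x v_N x^{-1}$ contributes, but since $x$ normalizes $N$ that factor stays in $I_N$ and can be folded into the remaining $I_N$-part. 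So we reduce to $v \in I_N$.

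Finally, with $v = v_N \in I_N$ and $\phi_x(I_N) \subseteq I_N$: we want $w_N \in I_N$ with $w_N^{-1} x v_N \sigma(w_N) = x$, i.e. $w_N^{-1} v_N \phi_x(w_N) = 1$, i.e. $v_N = w_N \phi_x(w_N)^{-1}$ again — but now $\phi_x$ only maps $I_N$ into itself, not onto. Here the key observation is that $\phi_x$ acts on $I_N$ topologically nilpotently relative to the filtration by congruence subgroups combined with the root filtration: because $x$ is $P$-fundamental, for each root $\alpha$ of $T$ in $N$ the pairing with the relevant cocharacter (the $M$-dominant Newton point, which lies in the center of $M$ and is non-negative on $N$) is eventually strictly positive after iterating $\phi_x$ enough times, so $\bigcap_i \phi_x^i(I_N) = \{1\}$. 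This lets one solve $v_N = w_N \phi_x(w_N)^{-1}$ by the geometric-series / successive-approximation trick: set $w_N = v_N \cdot \phi_x(v_N) \cdot \phi_x^2(v_N) \cdots$, which converges in $I_N$ precisely because $\phi_x$ is contracting, and then $w_N \phi_x(w_N)^{-1} = v_N$. This last step — establishing the contraction property of $\phi_x$ on $I_N$ carefully enough to guarantee convergence of the infinite product — is the main obstacle; everything before it is bookkeeping with Iwahori factorizations, while this step is where the hypothesis $\phi_x(I_N) \subseteq I_N$ (together with $P$-fundamentality pinning down the relevant Newton cocharacter) does the essential work. Combining the three reductions, every element of $IxI$ is $I$-$\sigma$-conjugate to $x$.
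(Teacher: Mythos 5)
Your plan matches the paper's overall strategy --- reduce to $xI$, apply the Iwahori decomposition $I=I_NI_MI_{\overline N}$ relative to $P=MN$, treat $I_M$ by a Lang--Steinberg argument via Lemma \ref{thmlang}, and treat the unipotent parts by a contraction/geometric-series argument --- but there is a genuine gap in the contraction step. You assert that for each root $\alpha$ of $T$ in $N$ the pairing with the Newton cocharacter ``is eventually strictly positive after iterating $\phi_x$ enough times'', concluding that $\bigcap_i\phi_x^i(I_N)=\{1\}$ and that the infinite product $v_N\phi_x(v_N)\phi_x^2(v_N)\cdots$ converges. But $\langle\nu,\alpha\rangle$ is a fixed rational number that iteration of $\phi_x$ does not change, and $P$-fundamentality only forces $\langle\nu,\alpha\rangle\geq 0$ for $\alpha$ a root of $T$ in $N$, not $>0$. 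If some such $\alpha$ has $\langle\nu,\alpha\rangle=0$, then a suitable power of $\phi_x$ restricts to a bijection of $I\cap U_{\alpha}$, the intersection $\bigcap_i\phi_x^i(I_N)$ is nontrivial, and your series fails to converge. The paper resolves exactly this by first invoking Lemma \ref{lemnew}: one replaces $P$ by the larger parabolic whose Levi factor is the centralizer of $\nu_{r,M}$, for which $x$ remains fundamental; after this enlargement every root in the new $N$ pairs strictly positively with $\nu_{r,M}$, and only then is $\phi_x$ genuinely topologically nilpotent on $I_N$. Without this reduction the argument breaks down, so this is not bookkeeping but a missing ingredient.

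A secondary issue is the order of operations. You treat $I_{\overline N}$, then $I_M$, then $I_N$. In your $I_{\overline N}$ step you kill $v_{\overline N}$ and push the new factor past $x$ to get $\phi_x^{-1}(v_{\overline N})\in I_{\overline N}$ to the \emph{left} of $v_Nv_M$; since $I_N$ does not normalize $I_{\overline N}$, re-factorizing $\phi_x^{-1}(v_{\overline N})v_Nv_M$ into $I_NI_MI_{\overline N}$ produces an $I_{\overline N}$-component that is not visibly smaller than $v_{\overline N}$, so ``after this we may assume $v\in I_NI_M$'' is not justified. The paper's order ($I_N$, then $I_{\overline N}$, then $I_M$) avoids this: once $I_N$ has been contracted away the element is $xg'_Mg'_{\overline N}$, and the $\phi_x^{-1}$-shrunk $\overline N$-factor only has to be commuted past $g'_M\in I_M$, which does normalize $I_{\overline N}$, so the iteration is clean. (Also, at several points you write $x\sigma(w)x^{-1}$ where you need $\phi_x(w)=\sigma(xwx^{-1})$; these coincide only when $\sigma(x)=x$.)
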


\begin{proof}
Each element of $IxI$ is $I$-$\sigma$-conjugate to an element of $xI$. By Lemma \ref{lemnew} we may assume that $P$ is maximal with the property that $x$ is $P$-fundamental, i.e.~that $\langle \nu_{r,M},\alpha\rangle>0$ for each root $\alpha$ of $T$ in $N$. In particular $\phi_x$ is topologically nilpotent on $I_N$, see Section \ref{sec25}.

Let $g\in I$. We apply the Iwahori decomposition to $g$ to obtain $g=g_Ng_Mg_{\overline{N}}\in I_NI_MI_{\overline{N}}$. Note that $xg_Nx^{-1}\in \sigma^{-1}(I)=I$. Thus $xg=(xg_Nx^{-1})xg_Mg_{\overline N}$ is $I$-$\sigma$-conjugate to $xg_Mg_{\overline N}\phi_x(g)\in x(I\cap \phi_x(I))$. By the Iwahori decomposition $I\cap \phi_x(I)=\phi_x(I_{N})I_MI_{\overline{N}}$. Using this to decompose $g_Mg_{\overline N}\phi_x(g)$ and iterating we obtain that $xg$ is $I$-$\sigma$-conjugate to an element of $x(I\cap \phi_x^n(I))$ for every $n$. Note that in the $n$th iteration we only $\sigma$-conjugate by an element of $x\phi_x^n(I_N)x^{-1}$. The morphism $\phi_x$ is topologically nilpotent on $I_N$. Hence the product of these elements exists and in the limit we obtain that $xg$ is $I$-$\sigma$-conjugate to an element of $x\left(\bigcap_{n\geq 0} \phi^n_x(I)\right)=xI_MI_{\overline{N}}$. We write this element as $xg'_Mg'_{\overline{N}}$. It is $I$-$\sigma$-conjugate to $x(x^{-1}\sigma^{-1}(g'_{\overline N})x)g'_M=xg'_M((g'_M)^{-1}\phi_x^{-1}(g'_{\overline N})g'_M)\in x(I\cap \phi_x^{-1}(I)\cap M\overline N)$. A similar iteration as above shows that $xg'_Mg'_{\overline{N}}$ is $I$-$\sigma$-conjugate to an element of $xI_M$. By our assumption $h\mapsto x^{-1}\sigma^{-1}(h^{-1})xh=\phi_x^{-1}(h^{-1})h$ defines a morphism $I_M\rightarrow I_M$. By Lemma \ref{thmlang} it is surjective, hence for every $g\in I_M$ there is a $\sigma^{-1}(h)\in\sigma^{-1}(I_M)\subseteq I$ which $\sigma$-conjugates $x$ to $xg$.
\end{proof}

\begin{thm}\label{thmexfundalc}
For every $[b]\in B(G)$ there exists an $x\in \widetilde W$ such that $x\in [b]$ is $P$-fundamental for some semi-standard $P$.
\end{thm}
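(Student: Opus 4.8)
The plan is to start from the $[b]$-short element produced by Lemma \ref{lemshort} and show it is $P$-fundamental for an appropriate semistandard $P$. Let $\nu\in X_*(T)_{\mathbb{Q}}^{\Gamma}$ be the dominant Newton point of $[b]$, let $M=M_\nu$ be its centralizer, and let $x\in\Omega_M\subseteq\widetilde W_M$ be the $[b]$-short element, so that $x$ is basic in $M$ with $M$-dominant Newton point $\nu$. I want a semistandard parabolic $P=MN$ with Levi $M$ such that $\phi_x(P)=P$, $\phi_x(I_N)\subseteq I_N$, and $\phi_x(I_{\overline N})\supseteq I_{\overline N}$. The first condition $\phi_x(I_M)=I_M$ is automatic: since $x\in\Omega_M$ we have $xI_Mx^{-1}=I_M$, and $M=M_\nu$ is defined over $\mathcal{O}_F$ hence stable under $\sigma$, so $\phi_x(M)=M$ and $\phi_x(I_M)=I_M$.

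The key point is choosing the correct parabolic among those with Levi $M$, i.e.\ choosing the right set of roots outside $M$ to put into $N$. The natural guess is to use the Newton point itself: set, for each root $\alpha$ of $T$ outside $M$, the sign of $\langle\alpha,\nu\rangle$ to decide whether $\alpha\in N$ or $\alpha\in\overline N$ — this is well defined precisely because $M$ is the centralizer of $\nu$, so $\langle\alpha,\nu\rangle\neq 0$ for every such $\alpha$. I need to check this prescription actually defines a parabolic subgroup (the set of chosen roots is closed and its complement inside the roots outside $M$ is the negative), which follows from $\nu$ being a rational cocharacter; and I need $\phi_x(P)=P$. For the latter, using the notation of Remark \ref{remnuM}, let $r$ be such that $\sigma^r=\id$ on $\widetilde W$ and $x'=\sigma(x)\cdots\sigma^r(x)\in\widetilde W_M$; the $M$-dominant Newton point of the $\sigma^r$-conjugacy class of $x'$ is $r$ times the (already $M$-central, since $x$ is basic in $M$) Newton point, which must agree with $\nu$ up to the identification of Newton points. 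So $\phi_x$ permutes the roots outside $M$ compatibly with the action on $\nu$, and since $\nu$ is $\Gamma$-invariant and $x'$-central, $\phi_x$ preserves the sign of $\langle\alpha,\nu\rangle$; hence $\phi_x(N)=N$, $\phi_x(\overline N)=\overline N$, giving $\phi_x(P)=P$.

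It remains to verify the containments $\phi_x(I_N)\subseteq I_N$ and dually $\phi_x(I_{\overline N})\supseteq I_{\overline N}$. Here I would argue as in the proof of Lemma \ref{lemnew}: pick $r'>0$ with $(x')^{r'}\in X_*(T)$, so $(x')^{r'}=r'\nu$ (identifying appropriately), and then $\phi_x^{rr'}(g)=\sigma^{rr'}\bigl((x')^{r'}g(x')^{-r'}\bigr)$. For $g$ in a root subgroup $U_\alpha$ with $\alpha$ a root in $N$ we have $\langle\alpha,\nu\rangle>0$ by construction, so conjugation by $(x')^{r'}=\epsilon^{r'\nu}$ contracts $U_\alpha(\mathcal O)$ into $U_\alpha(\mathcal O)\cap K_1$ by Section \ref{sec25}; since $\sigma$ preserves $I$, this gives $\phi_x^{rr'}(I_N)\subseteq I_N$, and because $\phi_x$ permutes $I$-root data in a way that stabilizes $N$, one deduces $\phi_x(I_N)\subseteq I_N$ from the $rr'$-th power statement by an elementary argument on the (finitely many) $\phi_x$-orbits of roots in $N$. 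The dual containment for $\overline N$ follows by the same computation with the inequality reversed. This establishes that $x$ is $P$-fundamental.

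The main obstacle I anticipate is the bookkeeping around the non-split Frobenius: making precise the identification of $M$-dominant Newton points under $\sigma$ and $\sigma^r$, and controlling how $\phi_x$ acts on the individual root subgroups $I_{U_\alpha}$ (not just on $I_N$ as a whole) so that the passage from the $\phi_x^{rr'}$-statement back to the $\phi_x$-statement is legitimate. One must be careful that $\phi_x$ may genuinely permute the roots of $N$ nontrivially, so the containment $\phi_x(I_N)\subseteq I_N$ has to be extracted orbit by orbit rather than root by root; the fact that $\langle\alpha,\nu\rangle>0$ is constant along each $\phi_x$-orbit (because $\nu$ is $\Gamma$-invariant and central in the relevant Levi) is what makes this work.
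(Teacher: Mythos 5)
There is a genuine gap, and it is at exactly the point you yourself flag as delicate. You claim that from $\phi_x^{rr'}(I_N)\subseteq I_N$ (where the $rr'$-th power of $\phi_x$ is conjugation by $\epsilon^{r'r\nu}$ up to the trivial $\sigma^{rr'}$) one can deduce $\phi_x(I_N)\subseteq I_N$ ``by an elementary argument on the $\phi_x$-orbits of roots in $N$.'' This implication is false as stated. Writing $I\cap U_\alpha=\epsilon^{m_\alpha}\mathcal{O}$, the map $\phi_x$ sends this to $\epsilon^{m_\alpha+d_\alpha}\mathcal{O}$ inside the root subgroup $U_{\beta}$ for the next root $\beta$ in the $\phi_x$-orbit, for some integer $d_\alpha$. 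What the $\phi_x^{rr'}$-contraction gives you is that the \emph{sum} of the shifts $d_\alpha+(m_\alpha-m_\beta)$ around each orbit is non-negative; it does not give you non-negativity at each step. The fact that $\langle\alpha,\nu\rangle>0$ holds at every root of the orbit only ensures the orbit stays inside $N$ --- it says nothing about the individual shifts, which are governed by $\mu_x$ (the translation part of the short element $x$), not by $\nu$. The paper even states explicitly after Theorem \ref{thmexfundalc} that ``in general, neither \bsh elements nor the representatives $w\tau_{\mu}$ of their truncation types are $P$-fundamental for any $P$,'' which is precisely the failure of your claim that the short element is already $P_\nu$-fundamental for the standard Iwahori.

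The fix in the paper is an additional idea you did not supply: instead of trying to show the short element $b_0$ is $P_\nu$-fundamental relative to the \emph{standard} Iwahori $I$, one changes the Iwahori. Lemma \ref{lemhelp3} produces a distinguished Iwahori $\tilde I$ with $\tilde I\cap M_\nu=I_{M_\nu}$ whose intersection with $N_\nu$ is minimal among those containing $I_{N_\nu},\phi_{b_0}(I_{N_\nu}),\dotsc,\phi_{b_0}^{r-1}(I_{N_\nu})$. The $\phi_{b_0}^r$-contraction you proved then shows $\phi_{b_0}(\tilde I\cap N_\nu)\subseteq\tilde I\cap N_\nu$ by a minimality/uniqueness argument, and Lemma \ref{lemhelp2} translates this back: the $P$-fundamental alcove in $[b]$ that is actually produced is $\sigma^{-1}(y)^{-1}b_0y$ for the element $y\in\widetilde{W}$ conjugating $\tilde I$ to $I$, and $P$ is $y^{-1}P_\nu y$, not $P_\nu$. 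So the correct output is a $W$-$\sigma$-conjugate of the short element, not the short element itself, consistent with Lemma \ref{lemfundshort}. Your outline needs this change-of-Iwahori (or an equivalent device) to close; as written, the crucial passage from the $rr'$-th power to the first power is unjustified and in general false.
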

Note that it is not easy to give an explicit description of a $P$-fundamental alcove contained in a given $[b]$. In general, neither \bsh elements nor the representatives $w\tau_{\mu}$ of their truncation types $(w,\mu)$ are $P$-fundamental for any $P$.

For the proof of the theorem we need the following three lemmas.

\begin{lemma}\label{lemcarmin1}
Let $P$ be a semistandard parabolic subgroup of $G$. Let $\tilde I$ be an Iwahori subgroup of $LG$ containing $T(\mathcal{O})$. Let $\tilde I_M=\tilde I\cap M$ and similarly for $\tilde I_N$ and $\tilde I_{\overline{N}}$. Let 
$x\in\widetilde W$ with $\phi_x(\tilde I_M)=\tilde I_M$. Then $\phi_x(\tilde I_N)\subseteq \tilde I_N$ if and only if $\phi_x(\tilde I_{\overline N})\supseteq \tilde I_{\overline N}$.
\end{lemma}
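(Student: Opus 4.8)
The plan is to translate the statement into inequalities on the ``depths'' of $\tilde I$ along the various root subgroups, exploiting the fact that the alcove condition defining an Iwahori subgroup supplies exactly the symmetry needed to make the two conditions equivalent. First I would fix a representative of $x$ in $N_T(L)$ and write $x\leftrightarrow(w,\lambda)\in W\ltimes X_*(T)$, and let $\psi\colon\Phi\to\Phi$ be the bijection $\psi(\alpha)=\sigma(w\alpha)$ of the absolute root system of $T$ in $G$; note $\psi(-\alpha)=-\psi(\alpha)$. Since $\tilde I$ is an Iwahori subgroup containing $T(\mathcal O)$, it is attached to an alcove of the standard apartment, so for each root $\alpha$ there is an integer $m_\alpha$ with $\tilde I\cap U_\alpha(L)=U_\alpha(\epsilon^{m_\alpha}\mathcal O)$, and -- this is the key input -- the alcove condition forces $m_\alpha+m_{-\alpha}=1$ for every $\alpha$. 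Using the formula of Section \ref{sec25} for conjugation of root subgroups by $\epsilon^\lambda$, together with the fact that conjugation by the chosen representative of $w$ and application of $\sigma$ permute root subgroups according to $\psi$ while fixing $\epsilon$ (hence preserving valuations), one computes $\phi_x\bigl(U_\alpha(\epsilon^{m_\alpha}\mathcal O)\bigr)=U_{\psi(\alpha)}(\epsilon^{\,m_\alpha+\langle\alpha,\lambda\rangle}\mathcal O)$.

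Next I would use the root-space decomposition $\Phi=\Phi_M\sqcup\Phi_N\sqcup\Phi_{\overline N}$ (with $\Phi_{\overline N}=-\Phi_N$) and the Iwahori factorization, which writes $\tilde I_N=\tilde I\cap N(L)$ and $\tilde I_{\overline N}=\tilde I\cap\overline N(L)$ as ordered products of the subgroups $U_\alpha(\epsilon^{m_\alpha}\mathcal O)$ for $\alpha$ running over $\Phi_N$ resp.\ $\Phi_{\overline N}$. Intersecting a set-theoretic containment of subgroups of $N(L)$ (resp.\ $\overline N(L)$) with a single root group $U_\gamma(L)$ reduces it to a containment of root subgroups; combined with the computation above this yields
\begin{align*}
\phi_x(\tilde I_N)\subseteq\tilde I_N &\iff \psi(\Phi_N)=\Phi_N \text{ and } m_\alpha+\langle\alpha,\lambda\rangle\ge m_{\psi(\alpha)}\ \text{ for all }\alpha\in\Phi_N,\\
\phi_x(\tilde I_{\overline N})\supseteq\tilde I_{\overline N} &\iff \psi(\Phi_{\overline N})=\Phi_{\overline N} \text{ and } m_\beta+\langle\beta,\lambda\rangle\le m_{\psi(\beta)}\ \text{ for all }\beta\in\Phi_{\overline N}.
\end{align*}
Here the hypothesis $\phi_x(\tilde I_M)=\tilde I_M$ gives $\psi(\Phi_M)=\Phi_M$, so $\psi$ permutes $\Phi_N\sqcup\Phi_{\overline N}$; since $\psi$ commutes with negation and $\Phi_{\overline N}=-\Phi_N$, the two set conditions $\psi(\Phi_N)=\Phi_N$ and $\psi(\Phi_{\overline N})=\Phi_{\overline N}$ are equivalent. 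Finally, assuming they hold, I would substitute $\beta=-\alpha$ ($\alpha\in\Phi_N$) in the second inequality and apply $m_{-\gamma}=1-m_\gamma$ to $\gamma=\alpha$ and to $\gamma=\psi(\alpha)$, together with $\langle-\alpha,\lambda\rangle=-\langle\alpha,\lambda\rangle$: the inequality $m_{-\alpha}+\langle-\alpha,\lambda\rangle\le m_{\psi(-\alpha)}$ becomes $(1-m_\alpha)-\langle\alpha,\lambda\rangle\le 1-m_{\psi(\alpha)}$, i.e.\ $m_\alpha+\langle\alpha,\lambda\rangle\ge m_{\psi(\alpha)}$, which is exactly the first inequality. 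Hence the two displayed conditions coincide, proving the equivalence.

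I expect the main obstacle to be careful bookkeeping rather than any genuine difficulty: one must pin down how $\phi_x$ acts on root subgroups in the quasi-split (not necessarily split over $\mathcal O_F$) case -- i.e.\ that $\sigma$, acting $L$-semilinearly and permuting the affine roots $\alpha+n$, sends the alcove defining $\tilde I$ again to an alcove (so the relation $m_\alpha+m_{-\alpha}=1$ is all one needs) and leaves valuations on root groups unchanged -- and one must invoke the Iwahori factorization for the unipotent radicals $N$ and $\overline N$ so that the inclusions in the lemma can genuinely be tested one root subgroup at a time. Once these standard structural facts about Iwahori subgroups are in place, the proof is the short computation above.
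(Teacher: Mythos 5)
Your proof is correct and follows essentially the same approach as the paper: reduce both inclusions to root-by-root inequalities on the exponents $m_\alpha$ defining $\tilde I$ along root subgroups, then use the Iwahori relation $m_\alpha + m_{-\alpha}=1$ together with $\psi(-\alpha)=-\psi(\alpha)$ to flip one inequality into the other. The only differences are cosmetic (opposite direction of $\psi$, and you make the ``$\psi$ preserves $\Phi_N$'' bookkeeping explicit), and your added remark about $\sigma$ preserving valuations is a reasonable sanity check the paper leaves implicit.
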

Note that in this lemma we do not assume $\tilde I$ or $P$ to be fixed by $\sigma$.

\begin{proof}
As $\tilde I$ contains $T(\mathcal{O})$, the group $\tilde I_{\overline{N}}$ is a product of its intersections with the root subgroups for roots of $T$ in $\overline{N}$. Let $U_{\alpha}$ be such a root subgroup. We write $x=\epsilon^{\mu_x}w_x$ with $\mu_x\in X_*(T)$ and $w_x\in W$. Let $\psi(\alpha)$ be the root of $T$ in $\overline{N}$ with $\sigma(w_xU_{\psi(\alpha)}w_x^{-1})=U_{\alpha}$. The assertion on $\tilde I_{\overline N}$ is equivalent to $U_{\alpha}\cap \tilde I_{\overline{N}}\subseteq \sigma (x\tilde I_{\overline N}x^{-1})$ for all $\alpha$. This is equivalent to $U_{\alpha}\cap \tilde I_{\overline{N}}\subseteq \sigma (x(U_{\psi(\alpha)}\cap \tilde I_{\overline{N}})x^{-1})$. Note that $U_{\alpha}\cong \mathbb{G}_a$, hence we can identify $U_{\alpha}\cap \tilde I$ with $\epsilon^{\phi_{\alpha}}k[[\epsilon]]$ for some $\phi_{\alpha}\in\mathbb{Z}$. As $x=\epsilon^{\mu_x}w_x$ the above inclusion holds if and only if 
\begin{equation}\label{eqalpha}
\langle \mu_x,\sigma^{-1}(\alpha)\rangle+\phi_{\psi(\alpha)}\leq \phi_{\alpha}
\end{equation}
for all $\alpha$. As $\tilde I$ is an Iwahori subgroup we have $\phi_{\alpha}+\phi_{-\alpha}=1$ for all $\alpha$. Hence \eqref{eqalpha} is equivalent to
$$\langle \mu_x,\sigma^{-1}(-\alpha)\rangle+\phi_{-\psi(\alpha)}\geq \phi_{-\alpha}.$$ Note that $-\psi(\alpha)=\psi(-\alpha)$. Hence this last inequality is equivalent to the inclusion $\sigma(x\tilde I_Nx^{-1})\subseteq \tilde I_N$.
\end{proof}

\begin{lemma}\label{lemhelp2}
Let $[b]\in B(G)$ and let $M_{\nu}$ be the centralizer of its dominant Newton point. Let $P_{\nu}$ be the associated parabolic subgroup and let $N_{\nu}$ be its unipotent radical. Then $[b]$ contains a $P$-fundamental alcove $x$ if and only if there is an Iwahori subgroup $\tilde I$ with $\tilde I\cap M_{\nu}=I\cap M_{\nu}$ and an element $b_0\in \Omega_{M_{\nu}}\cap [b]$ with $\sigma(b_0(\tilde I\cap N_{\nu})b_0^{-1})\subseteq \tilde I\cap N_{\nu}$.
\end{lemma}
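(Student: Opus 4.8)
The plan is to prove both directions by carefully tracking how a $P$-fundamental alcove can be moved into $M_\nu$ up to a change of Iwahori subgroup. Recall that if $x\in[b]\cap\widetilde W$ is $P$-fundamental for $P=MN$, then by Lemma \ref{lemnew} we may enlarge $P$ so that $M$ contains the centralizer $M_\nu$ of the dominant Newton point $\nu=\nu_b$; after doing this $M\supseteq M_\nu$ and $N\subseteq N_\nu$, and $\phi_x(I_M)=I_M$, $\phi_x(I_N)\subseteq I_N$, $\phi_x(I_{\overline N})\supseteq I_{\overline N}$. The key observation for the ``only if'' direction is that the Newton point of $x$ within $M$ is already $M$-dominant and lies in $\Omega_M$ (since $\phi_x$ preserves $I_M$ forces the appropriate $x'$ into $\Omega_M$, as in the proof of Lemma \ref{lemnew}), but we need to descend from $\Omega_M$ to $\Omega_{M_\nu}$. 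So first I would pass from $x$ to an element $b_0\in\Omega_{M_\nu}\cap[b]$: work inside $M$, which contains $M_\nu$ as the centralizer of the (now $M$-central) Newton point, and apply the structure of $B(M)$ to replace $x$ by a representative $b_0\in\Omega_{M_\nu}$ of the same $\sigma$-conjugacy class — this is possible because basic-in-$M$ elements with central Newton point are, up to $M$-$\sigma$-conjugacy, controlled by $\kappa_M$, and the unique element of $\Omega_{M_\nu}$ with the right image in $\pi_1(M_\nu)$ does the job (this is essentially the argument in Lemma \ref{lemshort}).

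Next I would produce the Iwahori subgroup $\tilde I$. The element $x$ comes with the data that $\phi_x$ contracts $I_N$; I want to transport this along the $M$-$\sigma$-conjugacy $g^{-1}x\sigma(g)=b_0$ (with $g\in M(L)$) to get a contraction statement for $b_0$, but conjugated by $g$ it is no longer the standard Iwahori. So I would set $\tilde I = g^{-1}Ig$ — or rather adjust so that $\tilde I\cap M_\nu = I\cap M_\nu$, which I can arrange because $g\in M(L)$ and the relevant intersection only sees $M_\nu\subseteq M$; more precisely, since $b_0\in\Omega_{M_\nu}$ already normalizes $I\cap M_\nu$, I would choose $\tilde I$ to agree with $I$ on $M_\nu$ and to be the $g$-conjugate of $I$ on the $N_\nu$-part. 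Then $\sigma(b_0(\tilde I\cap N_\nu)b_0^{-1})\subseteq\tilde I\cap N_\nu$ follows by translating $\phi_x(I_N)\subseteq I_N$ through the conjugation, using that $N\subseteq N_\nu$ (so the containment on the smaller group $I_N$ upgrades, via Lemma \ref{lemcarmin1} applied with the Iwahori $\tilde I$ and parabolic $P_\nu$, to the corresponding one-sided statement on $N_\nu$ and $\overline N_\nu$). For the converse, given such $\tilde I$ and $b_0\in\Omega_{M_\nu}\cap[b]$ with $\sigma(b_0(\tilde I\cap N_\nu)b_0^{-1})\subseteq\tilde I\cap N_\nu$, Lemma \ref{lemcarmin1} gives automatically $\sigma(b_0(\tilde I\cap\overline N_\nu)b_0^{-1})\supseteq\tilde I\cap\overline N_\nu$, and $b_0\in\Omega_{M_\nu}$ gives $\phi_{b_0}(\tilde I\cap M_\nu)=\tilde I\cap M_\nu$; so $b_0$ is ``$P_\nu$-fundamental with respect to $\tilde I$''. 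Choosing $h\in LG$ with $h\tilde I h^{-1}=I$ and writing $x$ for the alcove $h b_0\sigma(h)^{-1}\dots$ — more carefully, conjugating the whole fundamentality datum back to the standard Iwahori — produces an honest $P_\nu$-fundamental element $x\in\widetilde W$ in $[b]$ (one must check $x\in\widetilde W$, i.e. $x$ normalizes $T(\mathcal O)$; this holds because $b_0\in N_T(L)$ and the conjugation by $h$ can be chosen in $N_T(L)\cdot(\tilde I\cap I)$).

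The main obstacle I anticipate is the bookkeeping around the Iwahori subgroup: the element $g$ (or $h$) conjugating between $\tilde I$ and $I$ lives in $M(L)$ or $LG$ and need not normalize $T$, so some care is needed to see that the conjugated fundamentality condition is still about an element of $\widetilde W$ and about an Iwahori containing $T(\mathcal O)$; the clean way is to note that $\Omega_{M_\nu}$-elements together with elements of $\tilde I\cap I$ suffice, because both Iwahoris contain $T(\mathcal O)$ and differ only by a relative position controlled by $W_{M_\nu}$-data, and on the $N_\nu$-part the relative position is exactly what is being optimized. A secondary point is to make sure the enlargement of $P$ via Lemma \ref{lemnew} is compatible with the eventual reduction to $P_\nu$: one should check that ``$[b]$ contains a $P$-fundamental alcove for \emph{some} semistandard $P$'' is equivalent to ``$[b]$ contains a $P_\nu$-fundamental alcove'', which is exactly the content of Lemma \ref{lemnew} since $M_\nu$ is forced to be contained in any maximal such $M$ and then $P_\nu$ satisfies (i)--(iii). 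Once these identifications are set up, both implications are short.
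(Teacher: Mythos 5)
There is a genuine gap, stemming from a misstatement of what Lemma \ref{lemnew} gives you. You claim that after enlarging $P$ you get $M\supseteq M_\nu$, and later that ``$M_\nu$ is forced to be contained in any maximal such $M$''. Neither is true: the maximal Levi $M$ produced by Lemma \ref{lemnew} is the centralizer of $\nu_{r,M}$, the \emph{$M$-dominant} Newton point, which need not be $G$-dominant, so $M$ is only $W$-conjugate to $M_\nu$, not a Levi containing it. This wrong containment undermines your first step (``work inside $M$, which contains $M_\nu$, and apply the structure of $B(M)$''): the group $\Omega_{M_\nu}$ is not a subgroup of $\widetilde W_M$ in general, so you cannot land in $\Omega_{M_\nu}\cap[b]$ by an $M$-internal argument. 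Your invocation of Lemma \ref{lemcarmin1} is also off: it relates the $N$-side one-sided inclusion to the opposite one-sided inclusion on $\overline N$ for the \emph{same} parabolic, but does not upgrade a contraction on the smaller $I_N$ to a contraction on $\tilde I\cap N_\nu$.

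The second, related gap is the Iwahori bookkeeping that you yourself flag. If you $\sigma$-conjugate $x$ to $b_0$ by some $g\in M(L)$, the set $g^{-1}Ig$ need not contain $T(\mathcal O)$, so it is not a priori an alcove of the apartment of $T$ and the hypothesis $\tilde I\cap M_\nu=I\cap M_\nu$ is not ensured; the asserted fix (``the conjugation by $h$ can be chosen in $N_T(L)\cdot(\tilde I\cap I)$'') is not substantiated. The paper's proof avoids both problems by only ever conjugating by elements of $\widetilde W$ or of $W$: for the ``if'' direction it first notes that $\tilde I\cap M_\nu=I\cap M_\nu$ forces $T(\mathcal O)\subset\tilde I$, hence $\tilde I=yIy^{-1}$ for some $y\in\widetilde W$, and then $x:=\sigma^{-1}(y)^{-1}b_0 y$ is $y^{-1}P_\nu y$-fundamental (note: not $P_\nu$-fundamental, as your converse claims); for the ``only if'' direction it chooses $w\in{}^M W^{M_\nu}$ with $w^{-1}Mw=M_\nu$ (after maximizing $P$ so $M$ is $W$-conjugate to $M_\nu$), sets $b_0=\sigma^{-1}(w)^{-1}xw$ and $\tilde I=w^{-1}Iw$, and the normalization-of-$T$ issue vanishes because $w\in W\subset\widetilde W$.
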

\begin{proof}
Assume first that there is an Iwahori subgroup $\tilde I$ and a $b_0$ as above. As $b_0\in \Omega_{M_{\nu}}$ we have $\phi_{b_0}(I_{M_{\nu}})=\sigma(I_{M_{\nu}})=I_{M_{\nu}}$. By Lemma \ref{lemcarmin1}, $\phi_{b_0}(\tilde I\cap \overline{N}_{\nu})\supseteq \tilde I\cap \overline N_{\nu}$. Note that $\tilde I\cap M_{\nu}=I\cap M_{\nu}$ implies that $T(\mathcal{O})\subset \tilde I$. Let $y\in \widetilde W$ with $y^{-1}\tilde I y=I$. Let $M=y^{-1}M_{\nu}y$ and $P=y^{-1}P_{\nu}y$ with unipotent radical $N$ and opposite $\overline N$. Then $y^{-1}\tilde I_{M_{\nu}}y=I_M$. Let $x=\sigma^{-1}(y)^{-1}b_0y\in [b]$. We have
\begin{eqnarray*}
\sigma(x I_{M}x^{-1})&=&\sigma(xy^{-1}\tilde I_{M_{\nu}}yx^{-1})\\
&=&y^{-1}\sigma(b_0\tilde I_{M_{\nu}}b_0^{-1})y\\
&=&I_M
\end{eqnarray*}
and similar translations for $N$ and $\overline N$. Hence $x$ is a $P$-fundamental alcove in $[b]$. For the other direction let $x$ be a $P$-fundamental alcove and let $w\in {}^MW^{M_{\nu}}$ with $w^{-1}Mw=M_{\nu}$. Then $w^{-1}(I\cap M)w=I\cap M_{\nu}$. A similar translation as above shows that $\tilde I=w^{-1}Iw$ and $b_0=\sigma^{-1}(w)^{-1}xw$ satisfy $\phi_{b_0}(P_{\nu})=P_{\nu}=\sigma(P_{\nu})$, hence $b_0\in\widetilde{W}_{M_{\nu}}$. Furthermore $\phi_{b_0}(I_{M_{\nu}})=I_{M_{\nu}}=\sigma(I_{M_{\nu}})$, whence $b_0I_{M_{\nu}}b_0^{-1}=I_{M_{\nu}}$ and $b_0\in \Omega_{M_{\nu}}$.
\end{proof}

\begin{lemma}\label{lemhelp3}
Let $M$ be the Levi factor containing $T$ of a standard parabolic subgroup $P$ and let $N$ be the unipotent radical of $P$. Let $I_1$ and $I_2$ be two Iwahori subgroups containing $I\cap M$ where $I$ is the standard Iwahori. Then there is a unique Iwahori subgroup $I'$ containing $I\cap M$, $I_1\cap N$, and $I_2\cap N$ and minimizing the intersection $I'\cap N$.
\end{lemma}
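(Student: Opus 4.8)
The plan is to translate the statement into the combinatorics of alcoves in the standard apartment $\mathcal{A}=X_*(T)\otimes\mathbb{R}$ and then to produce the extremal Iwahori by hand. Since $I\cap M\supseteq T(\mathcal{O})$, every Iwahori subgroup $\tilde I$ of $LG$ with $\tilde I\supseteq I\cap M$ contains $T(\mathcal{O})$ and hence corresponds to an alcove $A\subseteq\mathcal{A}$; for each root $\alpha$ one has $\tilde I\cap U_\alpha=U_\alpha(\epsilon^{\phi_\alpha(A)}\mathcal{O})$ with $\phi_\alpha(A)=-\lfloor\langle\alpha,x\rangle\rfloor$ for $x$ in the interior of $A$, and $\phi_\alpha(A)+\phi_{-\alpha}(A)=1$. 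The condition $\tilde I\supseteq I\cap M$ is equivalent to $\phi_\alpha(A)=\phi_\alpha(I)$ for every root $\alpha$ of $T$ in $M$, i.e.\ to $A$ lying in the region $\mathcal{C}=\{x\mid 0<\langle\alpha,x\rangle<1\ \forall\alpha\in\Phi_M^+\}$, the fundamental alcove of $M$ inside $\mathcal{A}$; for such $A$ the Iwahori decomposition (Section~\ref{sec23}) gives $\tilde I\cap N=\prod_{\alpha\in N}U_\alpha(\epsilon^{\phi_\alpha(A)}\mathcal{O})$, and the numbers $\phi_\alpha(A)$ for $\alpha\in\overline N$ are then forced by $\phi_\alpha+\phi_{-\alpha}=1$. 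Thus an Iwahori $\tilde I\supseteq I\cap M$ is determined by $\tilde I\cap N$ alone, so the uniqueness assertion will be automatic once existence is established: if two Iwahoris containing $I\cap M$ have the same minimal intersection with $N$, all their $\phi_\alpha$ agree and they coincide.

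For existence I would argue as follows. By Section~\ref{sec25}, $\tilde I\supseteq I_j\cap N$ translates into $\langle\alpha,x\rangle\ge\lfloor\langle\alpha,x_j\rangle\rfloor$ for all $\alpha\in N$, where $x_j$ lies in the alcove of $I_j$ (note $x_j\in\mathcal{C}$). Hence the admissible Iwahoris correspond to the alcoves inside the convex polyhedron $D=\{x\in\mathcal{C}\mid\langle\alpha,x\rangle\ge M_\alpha\ \forall\alpha\in N\}$ with $M_\alpha=\max_j\lfloor\langle\alpha,x_j\rangle\rfloor$, and among these one must minimise all the integers $\lfloor\langle\alpha,x\rangle\rfloor$, $\alpha\in N$, simultaneously. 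One checks $D\ne\emptyset$ by translating $x_1$ far in the direction of $v=\sum_{i\notin M}\varpi_i^\vee$, which is central in $M$ (so the translates stay in $\mathcal{C}$) and pairs strictly positively with every $\alpha\in N$. Because each $\langle\alpha,\cdot\rangle$ ($\alpha\in N$) is bounded below on $D$ and the defining inequalities propagate under addition of roots, one has $D=\{x\in\mathcal{C}\mid\langle\alpha,x\rangle\ge\tilde M_\alpha\}$, where $\tilde M$ is the smallest superadditive majorant of $(M_\alpha)_{\alpha\in N}$; dually $\tilde M$ is exactly the exponent datum of the bounded $T(\mathcal{O})$-stable subgroup $\langle I_1\cap N,I_2\cap N\rangle=\prod_{\alpha\in N}U_\alpha(\epsilon^{-\tilde M_\alpha}\mathcal{O})$ obtained from the two $I_j\cap N$ via the Chevalley commutator relations. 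So the candidate for the minimal $I'\cap N$ is this group, and the remaining point is that it is the intersection with $N$ of an Iwahori containing $I\cap M$.

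This last point is the heart of the matter and the step I expect to be the main obstacle. Concretely, one must show that the concave function $-\tilde M$ on $N$, extended by $\phi_\alpha(I)$ on roots of $M$ and by $1+\tilde M_{-\alpha}$ on roots in $\overline N$, is the $\phi$-datum of an alcove --- equivalently that the finite system $\{\tilde M_\alpha<\langle\alpha,x\rangle<\tilde M_\alpha+1:\alpha\in N\}\cup\{0<\langle\alpha,x\rangle<1:\alpha\in\Phi_M^+\}$ of strict linear inequalities is feasible in $\mathcal{A}$. I would prove feasibility by verifying the relevant realisability criterion for alcove data; its essential content is the two-term compatibility $|\tilde M_\alpha+\tilde M_\beta-\tilde M_{\alpha+\beta}|\le 1$ for roots $\alpha,\beta,\alpha+\beta\in N$ (the conditions involving roots of $M$ being immediate). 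Here $\tilde M_{\alpha+\beta}\le\tilde M_\alpha+\tilde M_\beta$ is superadditivity; for the reverse bound one exploits that $\tilde M$ is assembled from the two \emph{realisable} functions $f^{(j)}_\alpha=\lfloor\langle\alpha,x_j\rangle\rfloor$, which are themselves superadditive and satisfy the same compatibility --- grouping an optimal decomposition of $\alpha+\beta$ according to which $x_j$ realises each $M_{\gamma_i}$ expresses $\tilde M_{\alpha+\beta}$ in terms of the $f^{(j)}$ on two summands of $\alpha+\beta$ and lets one descend to the compatibility for the $f^{(j)}$ (morally: a very negative $-\tilde M_{\alpha+\beta}$ forces some $\langle\gamma,x_j\rangle$ to be large for a part $\gamma$ of a decomposition, hence forces $-\tilde M_\alpha$ or $-\tilde M_\beta$ to be small as well). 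Once feasibility is known, its solution set meets no affine root hyperplane in its interior --- there is no integer strictly between $\tilde M_\alpha$ and $\tilde M_\alpha+1$, and the walls $\langle\alpha,\cdot\rangle=0,1$ for $\alpha\in\Phi_M^+$ bound it --- so it is a single alcove $A^\ast$. The attached Iwahori $I'$ then contains $I\cap M$ and each $I_j\cap N$, and any admissible $\tilde I$ satisfies $\lfloor\langle\alpha,x\rangle\rfloor\ge\tilde M_\alpha$ for $\alpha\in N$, i.e.\ $\tilde I\cap N\supseteq I'\cap N$; thus $I'$ is the unique minimiser.
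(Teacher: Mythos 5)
Your translation to alcove combinatorics is the right starting point and matches the paper's framework: the uniqueness reduction (an Iwahori containing $I\cap M$ is determined by its intersection with $N$, by $\phi_\alpha+\phi_{-\alpha}=1$) is correct, as is the reduction of existence to the nonemptiness of a specific polyhedron of alcoves. The nonemptiness of $D$ via translation by a coweight $v$ central in $M$ is also fine.

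The genuine gap is exactly where you flag it: you must show that the smallest superadditive majorant $\tilde M$ of $(M_\alpha)_{\alpha\in N}$ is realized by an alcove, and the argument you sketch for this is not a proof. Two concrete worries. First, you assert that feasibility of the strip system $\{\tilde M_\alpha<\langle\alpha,x\rangle<\tilde M_\alpha+1\}$ is governed by a two-term compatibility $|\tilde M_\alpha+\tilde M_\beta-\tilde M_{\alpha+\beta}|\le 1$; but a finite system of strips in $\mathbb{R}^n$ is not in general pairwise-determined (Helly gives only $(n+1)$-wise), and you have not established that the root-system structure collapses this to a two-term criterion. Second, even granting the criterion, the verification is reduced to a ``morally\ldots'' heuristic rather than an estimate, and the passage from ``$\tilde M_{\alpha+\beta}$ is realized by an optimal decomposition'' to the desired bound on $\tilde M_\alpha,\tilde M_\beta$ is precisely what needs to be proved. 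The paper sidesteps this difficulty entirely: instead of describing the minimal $\phi$-datum explicitly, it shows directly that $\mathcal{P}_{I_1}\cap\mathcal{P}_{I_2}$ is of the form $\mathcal{P}_{I'}$ by induction on gallery distance from $\mathfrak{a}_1$ to that region, reflecting across a wall of $\mathfrak{a}_1$ separating it from the intersection. That induction replaces your superadditive-majorant analysis with one elementary observation about how reflecting an alcove across one of its own walls changes $\mathcal{P}$ by intersecting with a half-space, and requires no realizability criterion. If you want to salvage your route, you would need to prove the realizability criterion for the root-system strip system (or find a reference) and then give a genuine inductive argument for the compatibility of $\tilde M$; as it stands, the central step is missing.
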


\begin{proof}
The Iwahori subgroups we are interested in correspond to alcoves in the apartment corresponding to $T$ in the Bruhat-Tits building of $G$. Note that an Iwahori subgroup $\tilde I$ containing $I\cap M$ satisfies $\tilde I\cap M =I\cap M$. The Iwahori subgroups $J$ containing $\tilde I\cap P$ correspond to the alcoves in the intersection of the half-spaces of the apartment corresponding to the conditions $J\cap U_{\alpha}\supseteq \tilde I\cap U_{\alpha}$ for each root $\alpha$ in $P$. We denote this subset of the apartment by $\mathcal{P}_{\tilde I}$. Note that $\mathcal{P}_{\tilde I}=\mathcal{P}_{\tilde I'}$ implies that $\tilde I\cap P=\tilde I'\cap P$ and thus $\tilde I=\tilde I'$. It is thus enough to show that $\mathcal{P}_{I_1}\cap\mathcal{P}_{I_2}=\mathcal{P}_{I'}$ for some $I'$. Note that our assumption $I_1\cap M=I_2\cap M$ implies that $\mathcal{P}_{I_1}\cap\mathcal{P}_{I_2}$ is non-empty. Let $\mathfrak{a}_1$ and $\mathfrak{a}_2$ denote the alcoves corresponding to $I_1$ and $I_2$. To prove the assertion above we use induction on the minimal distance in the building between $\mathfrak{a}_1$ and an alcove in $\mathcal{P}_{I_1}\cap\mathcal{P}_{I_2}$. If this distance is 0, then $\mathfrak{a}_1\in \mathcal{P}_{I_1}\cap\mathcal{P}_{I_2}$, hence $\mathcal{P}_{I_1}\cap\mathcal{P}_{I_2}=\mathcal{P}_{I_1}$. Assume now that $\mathfrak{a}_1\notin \mathcal{P}_{I_1}\cap\mathcal{P}_{I_2}$. Then there is an affine hyperplane bounding $\mathfrak{a}_1$ and with the property that $\mathfrak{a}_1$ and $\mathcal{P}_{I_1}\cap\mathcal{P}_{I_2}$ lie on different sides of this hyperplane. Let us denote this hyperplane by $H$. Let $\mathfrak{a}_1'$ be the alcove obtained from $\mathfrak{a}_1$ by reflection at $H$ and let $I_1'$ be the corresponding Iwahori subgroup. Then by definition the minimal distance of $\mathfrak{a}_1'$ to an element of $\mathcal{P}_{I_1}\cap\mathcal{P}_{I_2}$ is 1 less than the corresponding distance for $\mathfrak{a}_1$. Thus by induction it is enough to show that $\mathcal{P}_{I_1}\cap\mathcal{P}_{I_2}=\mathcal{P}_{I_1'}\cap\mathcal{P}_{I_2}$. For all affine hyperplanes $H'\neq H$, the two alcoves $\mathfrak{a}_1'$ and $\mathfrak{a}_1$ lie on the same side of $H'$. Let $S$ be the half-space bounded by $H$ and containing $\mathfrak{a}_1'$. Then $\mathcal{P}_{I'_1}= \mathcal{P}_{I_1}\cap S$. On the other hand we chose $H$ such that $\mathcal{P}_{I_1}\cap\mathcal{P}_{I_2}\subseteq S$. Hence $\mathcal{P}_{I_1}\cap\mathcal{P}_{I_2}=\mathcal{P}_{I_1'}\cap\mathcal{P}_{I_2}$.
\end{proof}
For a precise description of the sets $\mathcal{P}_I$ for the standard Iwahori compare \cite{GHKR2}, 3.

\begin{proof}[Proof of Theorem \ref{thmexfundalc}]
Let $\nu\in X_*(T)_{\mathbb{Q},\rm dom}$ be the dominant Newton point of $[b]$. Let $P_{\nu}$ be the associated parabolic subgroup and $P_{\nu}=M_{\nu}N_{\nu}$ the decomposition into the Levi factor containing $T$ and the unipotent radical. Recall that $\nu$, $P_{\nu}$, $M_{\nu}$ and $N_{\nu}$ are $\sigma$-invariant. Let $b_0\in \widetilde W_{M_{\nu}}$ be a \bsh element. By Lemma \ref{lemhelp2} it is enough to prove that there is an Iwahori subgroup $\tilde I$ of $LG$ with $\tilde{I}\cap M_{\nu}=I_{M_{\nu}}$ and such that $\phi_{b_0}(\tilde I\cap N)\subseteq \tilde I\cap N$. Let $r>0$ be such that $G$ is split over some unramified extension of $\mathcal{O}_F$ of degree $r$. In particular, $\sigma^r$ then acts trivially on the root system of $G$ and on $\widetilde W$. Let $c:=\sigma(b_0)\sigma^{2}(b_0)\dotsm \sigma^r (b_0)\in \widetilde W$. Applying the decomposition $\widetilde W=W\ltimes X_*(T)$ we obtain $c=w_c\mu_c$. Let $n_c$ be the order of $w_c$ in $W$. Replacing $r$ by $n_cr$ and using $\sigma(b_0)\sigma^{2}(b_0)\dotsm \sigma^{rn_c}b_0=c^{n_c}\in X_*(T)$ we may assume that we already have $c\in X_*(T)$. Note that as $b_0$ is $[b]$-short, we have $c=\nu_{r,M_{\nu}}=r\nu$ as elements of $X_*(T)_{\mathbb{Q}}$. In particular, $c$ is dominant and central in $M_{\nu}$. Using Section \ref{sec25} and the fact that $\sigma^r$ acts trivially on $\widetilde W$ we obtain that $\sigma^r(cI_{M_{\nu}}c^{-1})=cI_{M_{\nu}}c^{-1}=I_{M_{\nu}}$ and $\sigma^r(cI_{N_{\nu}}c^{-1})= cI_{N_{\nu}}c^{-1}\subseteq I_{N_{\nu}}$. Hence $c$ itself is a $P_{\nu}$-fundamental alcove for the $\sigma^r$-conjugacy class of $c$ in $G$. Let $\tilde I$ with $\tilde I\cap M_{\nu}=I_{M_{\nu}}$ be unique Iwahori subgroup of $LG$ such that $\tilde I\cap N_{\nu}$ is minimal containing $I_{N_{\nu}}, \phi_{b_0}(I_{N_{\nu}}),\dotsc, \phi_{b_0}^{r-1} (I_{N_{\nu}})$, cf. Lemma \ref{lemhelp3}. Then $\phi_{b_0}(\tilde I)$ is again an Iwahori subgroup. It satisfies $\phi_{b_0}(\tilde I)\cap M_{\nu}=\phi_{b_0}(\tilde I\cap M_{\nu})=I_{M_{\nu}}$ and the analogous minimality property for $\phi_{b_0}(I_{N_{\nu}}),\dotsc, \phi_{b_0}^{r} (I_{N_{\nu}})$. We have $\phi_{b_0}^r(I_{N_{\nu}})=\sigma^r(cI_{N_{\nu}}c^{-1})\subseteq I_{N_{\nu}}$. Thus $\phi_{b_0}(\tilde I\cap N_{\nu})\subseteq \tilde I\cap N_{\nu}$, and $\tilde I$ is as claimed.
\end{proof}

\begin{remark}\label{newremark'}
Denote for the moment by $B(G)_{k((t))}$ the set of $\sigma$-conjugacy classes in $G(k((t)))$ and by $B(G)_{W(k)[1/p]}$ the corresponding set in $G(W(k)[1/p])$. Kottwitz's classification maps both sets injectively to $X_*(T)_{\mathbb{Q}}\times \pi_1(G)_{\Gamma}$. Note that $X_*(T)_{\mathbb{Q}}\times \pi_1(G)_{\Gamma}$ only depends on $G_{\mathbb{F}_q}$ but not on $k$ or on the choice of the arithmetic or the function field case. Furthermore the images of $B(G)_{k((t))}$ and $B(G)_{W(k)[1/p]}$ in $X_*(T)_{\mathbb{Q}}\times \pi_1(G)_{\Gamma}$ can also be described in terms of $G_{\mathbb{F}_q}$, and are independent of the choice of $L$. In particular we obtain canonical bijections $B(G)_{k((t))}\cong B(G)_{W(k)[1/p]}$ and $B(G)_{k((t))}\cong B(G)_{k'((t))}$ for all algebraically closed fields $k'$ of characteristic $p$. From now on we use these bijections to identify the sets of $\sigma$-conjugacy classes and write again $B(G)$ for all of them.
\end{remark}
The main reason to introduce fundamental alcoves in this paper is the following proposition which yields a direct comparison between non-emptiness of intersections of $\sigma$-conjugacy classes and Iwahori double cosets in the function field case and in the arithmetic case.

\begin{prop}\label{lemsuperset}
\begin{enumerate}
\item Let $L=k((t))$ or $\Quot (W(k))$. Let $[b]\in B(G)$ and let $x_b$ be a $P$-fundamental alcove contained in $[b]$. Then $$\{x \in \widetilde W\mid I x I \cap [b] \neq \emptyset\} = \{x \in \widetilde W\mid x\in  I y^{-1} I x_b I\sigma( y) I\text{ for some } y\in \widetilde{W}\}.$$
\item Let $x\in \widetilde{W}$. Then a $\sigma$-conjugacy class in $G(W(k)[1/p])$ contains an element of $IxI$ (for $I$ defined with respect to $W(k)$) if and only if the corresponding $\sigma$-conjugacy class in $G(k((t)))$ contains an element of $IxI$ (where $I$ is now a subgroup of $LG(k)$).
\end{enumerate}
\end{prop}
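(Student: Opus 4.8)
The plan is to prove Proposition~\ref{lemsuperset} in two steps, first establishing part~(1) in each of the two cases and then deducing part~(2) by comparing the two descriptions.

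\textbf{Part (1).} Fix a $P$-fundamental alcove $x_b\in[b]\cap\widetilde W$, which exists by Theorem~\ref{thmexfundalc}. First I would prove the inclusion ``$\supseteq$'': if $x\in Iy^{-1}Ix_bI\sigma(y)I$ for some $y\in\widetilde W$, then picking any $g\in G(L)$ with $g\in IyI$ we have, by Remark~\ref{remhe} and the defining property $g^{-1}x_b\sigma(g)\in Ix_b I$ for $g\in I$ (more precisely, using that $I$-$\sigma$-conjugation by $IyI$ sends $x_b$ into $Iy^{-1}Ix_bI\sigma(y)I$), the coset $IxI$ meets the $\sigma$-conjugacy class of $x_b$, which is $[b]$. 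For the reverse inclusion ``$\subseteq$'': suppose $IxI\cap[b]\neq\emptyset$, say $h\in IxI$ with $h=g^{-1}x_b\sigma(g)$ for some $g\in G(L)$. Write $P=MN$ with opposite unipotent radical $\overline N$. Using the Iwasawa decomposition $G(L)=P(L)K$ and the Bruhat decomposition of $K$ relative to $I$, write $g=nmki_1yi_2$ as in the proof of Proposition~\ref{propmain}; after absorbing the $K_1$-factors and the $I$-factors one reduces to $g=nmy$ with $y\in{}^{M}W$ chosen of minimal length in $W_My$. The key points are then (i) since $x_b$ is $P$-fundamental, $\phi_{x_b}$ stabilizes $I_M$ and contracts $I_N$ (after enlarging $P$ via Lemma~\ref{lemnew} so that $\phi_{x_b}$ is topologically nilpotent on $I_N$), so the factor $n$ can be removed exactly as in Lemma~\ref{remoortmain}, i.e.\ $n^{-1}x_b\sigma(n)\in x_b(I\cap\phi_{x_b}(I))$ and iterating absorbs $n$ into the surrounding Iwahori factors; and (ii) the factor $m\in M(L)$ can be handled because $x_b\in\Omega_M$, so $m^{-1}x_b\sigma(m)$ stays in the (unique, closed) $I_M$-double coset $I_Mx_bI_M$ up to $I_M$-$\sigma$-conjugacy --- this is the same argument as at the end of the proof of Proposition~\ref{propmain}. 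Conjugating by the remaining factor $y$ (which, by minimality of $y$ and $\sigma$-stability of $I$ and $M$, sends $I_M$ into $I$) then gives $h\in Iy^{-1}Ix_bI\sigma(y)I$, as desired. One has to be a little careful that the Iwasawa/Bruhat bookkeeping and the limiting arguments go through in \emph{both} the equicharacteristic and the $p$-adic settings; but none of the steps above used a scheme structure --- they are statements about double cosets in $G(L)$ --- so they are valid verbatim in both cases. This is where I expect the bulk of the routine but delicate work to lie: making sure each absorption step and each $\sigma$-conjugation is simultaneously legitimate in mixed and equal characteristic.

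\textbf{Part (2).} Now fix $x\in\widetilde W$. By Theorem~\ref{thmexfundalc}, for each $\sigma$-conjugacy class we may pick a $P$-fundamental alcove $x_b$; by Remark~\ref{newremark'} the set $B(G)$ and the association $[b]\mapsto x_b$ (via the Newton point and $\kappa$, which determine $x_b\in\Omega_{M_\nu}$ up to the same data) can be taken to match up under the canonical bijection $B(G)_{W(k)[1/p]}\cong B(G)_{k((t))}$ --- indeed a $P$-fundamental alcove is an element of $\widetilde W$ and its $\sigma$-conjugacy class is detected purely by $(\nu,\kappa)$, which live in groups independent of $L$. By part~(1), in either case the condition ``$IxI$ meets the class of $x_b$'' is equivalent to the purely combinatorial condition
\[
x\in Iy^{-1}Ix_bI\sigma(y)I\ \text{for some }y\in\widetilde W,
\]
and by Remark~\ref{remhe} this last condition can be rephrased entirely inside $\widetilde W$ with the Bruhat order: it says there exist $y$ and $z\le x_b$ (in the Bruhat order, computed in $\widetilde W$) with $x\le y^{-1}z\,\sigma(y)$ appropriately --- in any case it is a condition on $(\widetilde W,\le,\sigma,x,x_b)$ only. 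Since $\widetilde W$, its Bruhat order, the action of $\sigma$, and the element $x_b$ attached to a given class are the \emph{same} data in the two settings, the equivalence ``class meets $IxI$ in mixed characteristic $\iff$ corresponding class meets $IxI$ in equal characteristic'' follows immediately. The only genuine subtlety --- and the main obstacle --- is checking that the $P$-fundamental alcove can be chosen compatibly for corresponding classes, i.e.\ that the proof of Theorem~\ref{thmexfundalc} (which goes through Lemmas~\ref{lemhelp2} and~\ref{lemhelp3} and is case-independent, using only the combinatorics of the apartment, $\Omega_{M_\nu}$, $\phi_{b_0}$, and $\sigma^r$) really produces, for matched classes, the \emph{same} element $x_b\in\widetilde W$; this holds because that proof only ever uses $b_0\in\Omega_{M_\nu}$ determined by $\kappa$ and $\nu$, together with $\sigma$ acting on $\widetilde W$, none of which sees the choice of $L$.
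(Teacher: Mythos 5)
Your Part (2) argument is essentially the paper's, and your Part (1) ``$\supseteq$'' direction correctly rests on Lemma~\ref{remoortmain}. But your Part (1) ``$\subseteq$'' direction both misses the intended short argument and has a genuine gap.

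\textbf{The intended argument for ``$\subseteq$''.} If $g\in IxI\cap[b]$, write $g=h^{-1}x_b\sigma(h)$ for some $h\in G(L)$, and let $y\in\widetilde W$ be the (unique) element with $h\in IyI$ from the Bruhat--Tits decomposition. Then immediately $g\in (IyI)^{-1}x_b\,\sigma(IyI)=Iy^{-1}Ix_bI\sigma(y)I$, and since this set is a union of $I$-double cosets, $IxI=IgI\subseteq Iy^{-1}Ix_bI\sigma(y)I$; in particular $x$ lies there. No Iwasawa decomposition, no absorption of unipotent factors, nothing case-dependent. That is the entirety of the paper's proof of ``$\subseteq$''.

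\textbf{The gap in your approach.} You instead try to transport the machinery of Proposition~\ref{propmain}, decomposing $g$ as $g=nmy$ with $n\in N(L)$, $m\in M(L)$, $y\in W$, and then ``removing'' $n$. Two problems. First, in Proposition~\ref{propmain} the unipotent factor $n\in N(L)$ is eliminated by constructing a one-parameter family $\phi:\mathbb{A}^1_k\to LN$ via a cocharacter $\chi$ and passing to the fiber at $0$; this uses the ind-scheme structure of $LG$ over $k$ and only makes sense when $L=k((t))$. You cannot do it ``exactly as in Lemma~\ref{remoortmain}'': that lemma contracts elements of $I_N=N(\mathcal{O})\cap I$ via $\phi_{x_b}$, not general elements of $N(L)$. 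So your assertion that ``none of the steps above used a scheme structure \dots they are valid verbatim in both cases'' is precisely where the argument breaks for $L=\Quot(W(k))$, which Part (1) explicitly includes. Second, even if one could reduce $g$ by $\sigma$-conjugation, what matters for the conclusion $x\in Iy^{-1}Ix_bI\sigma(y)I$ is the Bruhat--Tits cell of $g$ itself, not of a $\sigma$-conjugate of $g$; and after your reduction $y$ is an element of the finite Weyl group $W$, while the statement calls for $y\in\widetilde W$, so it is not clear the output of your reduction even matches the target form. The direct Bruhat--Tits observation avoids all of this.

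\end{document}
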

For split groups the first assertion is \cite{GHKR2}, Proposition 13.3.1. Our statement follows using the same proof. As it is very short we repeat it for the reader's convenience. 
\begin{proof}
Let $g\in IxI\cap [b]$. Then there is an $h\in LG$ with $h^{-1}x_b\sigma(h)=g$. Let $y\in \widetilde W$ with $h\in IyI$. Then $x\in IxI=IgI\subset Iy^{-1}Ix_b I\sigma(y)I$. For the other direction let $x\in I y^{-1} I x_b I\sigma( y) I$ for some $y\in \widetilde{W}$. Then $IxI\cap y^{-1}Ix_bI\sigma(y)\neq\emptyset$. Recall that every element of $Ix_bI$ is of the form $i^{-1}x_b\sigma(i)$ for some $i\in I$ (Lemma \ref{remoortmain}). Thus $IxI$ contains an element of the form $y^{-1}i^{-1}x_b\sigma(iy)\in [b]$.

From (1) together with Theorem \ref{thmexfundalc} we see that both conditions in (2) can be translated into the same condition in terms of the combinatorics of $\widetilde{W}$ which is independent of the choice of $L$. Thus (2) follows.
\end{proof}

In particular, we can now easily deduce Theorem \ref{corsuperset}.
\begin{proof}[Proof of Theorem \ref{corsuperset}]
This follows from Proposition \ref{lemsuperset} (2) together with Theorem \ref{propdef} (2).
\end{proof}
We finish our discussion of fundamental alcoves by a comparison to \sh elements.
\begin{lemma}\label{lemfundshort}
Let $G$ be split. Then every $P$-fundamental alcove in a given $\sigma$-conjugacy class $[b]$ is $W$-conjugate to the unique \bsh element.
\end{lemma}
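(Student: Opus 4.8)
The plan is to pass, via Lemma~\ref{lemnew}, to the case where the Levi $M$ of the semistandard parabolic $P=MN$ for which $x$ is $P$-fundamental is the centralizer of the Newton point of $x$; then to conjugate $x$ by a suitable $w\in W$ into $\Omega_{M_\nu}$ and check that the resulting element is $[b]$-short. Since $G$ is split, Lemma~\ref{lemshort} provides a unique $[b]$-short element, so that element must then be $wxw^{-1}$, and the claim follows.

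First I would record the standard simplifications for split $G$: $\sigma$ acts trivially on $\widetilde W$ and on the apartment and fixes each standard root subgroup, so one may choose $\sigma$-invariant representatives in $N_T(\mathcal O_F)$, every semistandard parabolic is $\sigma$-stable, and $\phi_x=\mathrm{Ad}(x)$. Now let $x\in[b]$ be a $P$-fundamental alcove. By the first step of the proof of Lemma~\ref{lemnew} I may replace $P$ by the parabolic generated by $N$ and $Z_G(\nu_{r,M})$, so from now on $M=Z_G(\nu_{r,M})$ and, with $x'=x^{r}$ as in Remark~\ref{remnuM}, $x'\in\Omega_M$. Being in $\Omega_M$, $x'$ is basic in $M$, so its Newton point (for $\sigma^r$) is central in $M$; it is a positive multiple of the Newton point $\nu_x$ of $x$ and, being already $M$-dominant, equals $\nu_{r,M}$. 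Hence $\nu_x$ is central in $M$ and $M=Z_G(\nu_{r,M})=Z_G(\nu_x)$. The Newton point of an element is fixed by conjugation by that element (a standard property, using that $G$ is split); for $x\in\widetilde W$ this says that the finite part $w_x$ of $x$ fixes $\nu_x$, so $w_x\in\mathrm{Stab}_W(\nu_x)=W_M$. Therefore $x\in W_M\ltimes X_*(T)=\widetilde W_M$, and together with $xI_Mx^{-1}=I_M$ this forces $x\in\Omega_M$.

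Since $x\in[b]$, the cocharacter $\nu_x$ is $W$-conjugate to $\nu$, so $M=Z_G(\nu_x)$ is $W$-conjugate to $M_\nu=Z_G(\nu)$. I would take $w\in{}^{M_\nu}W^{M}$ of minimal length with $w\nu_x=\nu$; then $wMw^{-1}=M_\nu$ and $w(B\cap M)w^{-1}=B\cap M_\nu$, i.e.~$wI_Mw^{-1}=I_{M_\nu}$. Set $z=wxw^{-1}$. Then $z\in w\widetilde W_Mw^{-1}=\widetilde W_{M_\nu}$ and $zI_{M_\nu}z^{-1}=w\,(xI_Mx^{-1})\,w^{-1}=I_{M_\nu}$, so $z\in\Omega_{M_\nu}$; the Newton point of $z$ is $w\nu_x=\nu$, which is $G$-dominant, hence its $M_\nu$-dominant representative is $\nu$; and $\kappa_G(z)=\kappa_G(x)=\kappa_G(b)$ because $\kappa_G$ is a homomorphism into the abelian group $\pi_1(G)_\Gamma$ and $x\in[b]$. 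By Definition~\ref{defshort}, $z$ is $[b]$-short; by the uniqueness statement in Lemma~\ref{lemshort} it is the $[b]$-short element, and $x=w^{-1}zw$ is $W$-conjugate to it.

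The step I expect to be the main obstacle is the passage to $x\in\widetilde W_M$ after the enlargement: for the original small $P$ it simply fails (for instance a block permutation in $GL_4$ normalizes and preserves the Iwahori of a Levi $GL_2\times GL_2$ without lying in $\widetilde W_{GL_2\times GL_2}$), and it is precisely the equality $M=Z_G(\nu_x)$, combined with the conjugation-invariance of $\nu_x$, that makes it work. A secondary point requiring care is choosing $w$ minimal in ${}^{M_\nu}W^{M}$ so that conjugation by $w$ carries the standard Iwahori of $M$ to that of $M_\nu$ (and hence $\Omega_M$ to $\Omega_{M_\nu}$); once that choice is made everything else is routine.
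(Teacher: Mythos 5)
Your proof is correct and follows essentially the same route as the paper: reduce via Lemma~\ref{lemnew} so that $M$ is the centralizer of the Newton point, then $W$-conjugate $x$ into $\Omega_{M_\nu}$ and match it with the \bsh element (you appeal to the uniqueness in Lemma~\ref{lemshort}, the paper equivalently invokes the isomorphism $\kappa_{M_\nu}:\Omega_{M_\nu}\xrightarrow{\sim}\pi_1(M_\nu)$). One small remark: the detour through conjugation-invariance of the Newton point to get $x\in\widetilde W_M$ is not actually needed, and the ``obstacle'' you flag does not arise even for the original small $P$ --- the conditions $\phi_x(I_N)\subseteq I_N$ and $\phi_x(I_{\overline N})\supseteq I_{\overline N}$ force the finite part $w_x$ to permute the roots of $N$ and of $\overline N$, hence $w_x\in N_W(P)=W_M$ (your $GL_4$ block swap sends $N$ to $\overline N$, so it does not preserve $I_N$ and is never $P$-fundamental for a $P$ with that Levi).
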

Note that for split groups, $W$-conjugation coincides with $W$-$\sigma$-conjugation.
\begin{proof}
Let $b_0$ be $P$-fundamental and contained in $[b]$. We write $P=MN$ for the unipotent radical $N$ of $P$ and the Levi factor $M$ containing $T$. Let $x$ be the \bsh element and let $M_{\nu}$ be the centralizer of the dominant Newton point $\nu$ of $b$. By definition $x\in \widetilde{W}$ is an element of length 0 in $\widetilde{W}_{M_{\nu}}$. By Lemma \ref{lemnew} we may assume that $M$ is equal to the centralizer of the $M$-dominant Newton point of $b_0$. As $[b_0]=[b]$, their Newton points coincide and $\kappa_G(b_0)=\kappa_G(b)$. As in the proof of Lemma \ref{lemshort} the Newton point of $b_0$ together with $\kappa_G(b_0)$ determines $\kappa_{M}(b_0)$. Hence there is a $w\in W$ that conjugates $M$ to $M_{\nu}$ and also $\kappa_{M}(b_0)$ to the element $\psi_b\in\pi_1(M_{\nu})$ defined in the proof of Lemma \ref{lemshort}. Choosing $w$ of minimal length in its coset $W_{M}wW_{M_{\nu}}$ it also conjugates $I_M$ to $I_{M_{\nu}}$. Now $w^{-1}b_0w$ and $x$ are in $\widetilde{W}_{M_{\nu}}$ and both have length 0 as elements of $\widetilde{W}_{M_{\nu}}$. Thus they are in the subgroup $\Omega_{M_{\nu}}$. As $G$ is split, we have that $\kappa_{M_{\nu}}:\Omega_{M_{\nu}}\rightarrow \pi_1(M_{\nu})$ is an isomorphism. As the images of $x$ and $w^{-1}b_0w$ under $\kappa_{M_{\nu}}$ coincide, the elements have to be equal. Hence $b_0$ is $W$-conjugate to the \bsh element $x$.
\end{proof}

\section{Applications}\label{secpolpdiv}
In this section we consider the case $F=\mathbb{Q}_p$. We review some of the theory of Ekedahl-Oort strata and relate it to our notion of truncations of level 1. We concentrate on the example of the moduli space $\mathcal{A}_g$ of principally polarized abelian varieties of dimension $g$, and briefly indicate possible generalizations to other Shimura varieties of PEL type. For more details on this general theory of Ekedahl-Oort strata we refer to \cite{VW}.

The Ekedahl-Oort stratification is the stratification of $\mathcal{A}_g$ according to the $p$-torsion $(A,\lambda)[p]$ of the principally polarized abelian varieties $(A,\lambda)$ associated with the points of $\mathcal{A}_g$. It was first defined and studied by Oort in \cite{EO}. Oort classifies the $p$-torsion $(A,\lambda)[p]$ by a finite combinatorial invariant, so-called elementary sequences. A second description of the Ekedahl-Oort invariant (and more generally of $G(k)$-orbits on a certain variety associated with a reductive group $G$ over $\mathbb{Z}_p$ together with a fixed Levi subgroup that is also defined over $\mathbb{Z}_p$) has been given by Moonen and Wedhorn in \cite{MoonenWedhorn}. They use a description by so-called $F$-zips and identify the index set for the Ekedahl-Oort stratification of $\mathcal{A}_g$ with ${}^{\mu}W$ where $\mu$ is the minuscule dominant element given by the Shimura datum and where $W$ is the Weyl group of $GSp_{2g}$. Another related theory is Vasiu's classification of so-called Shimura $F$-crystals in \cite{Vasiu}, Main Theorem C.

In the language of truncations of level $1$ the Ekedahl-Oort invariant on $\mathcal A_g$ can be studied as follows. From an element of $\mathcal{A}_g(k)$ we obtain a polarized $p$-divisible group $(A,\lambda)[p^{\infty}]$. The polarization equips its Dieudonn\'e module with a symplectic form $\langle \cdot,\cdot\rangle$. In the same way as in Section \ref{compeo} we trivialize its Dieudonn\'e module and obtain that the Frobenius is given by an element $b$ of $GSp_{2g}(W(k)[1/p])$, well-defined up to $\sigma$-conjugation with $GSp_{2g}(W(k))=K(k)$. It satisfies that $b\in K(k)\mu(p)K(k)$ where $\mu$ is as above, i.e.~$\mu(p)$ is the diagonal matrix with entries $p$ and $1$, each with multiplicity $g$. The Ekedahl-Oort stratification is then nothing but the stratification that one obtains by considering the truncations of level $1$ of the elements $b$. The index set for truncations of level 1 of elements of $K\mu(p)K$ is equal to ${}^{\mu}W$ (Theorem \ref{propdef}(1)). This identification coincides with the one used in the classification by Moonen and Wedhorn.

For $w\in {}^{\mu}W$ let $S_w$ be the reduced subscheme of the reduction of $\mathcal{A}_g$ given by the condition that $(A,\lambda)[p]$ has Ekedahl-Oort invariant $w$. Oort proves that each stratum $S_w$ is locally closed, and the closure $\overline{S_w}$ is a union of strata. The set of strata that are contained in $\overline{S_w}$ is determined in \cite{Wedhorn} together with (6.4) of loc.~cit. It is given by the same formula as the closure relations between the corresponding strata $S_{w,\mu}$ in the loop group of $G=GSp_{2g}$ (that we compute in Corollary \ref{corclosure}).

Recall that in Theorem \ref{propdef}(2) we established a comparison between the stratification by truncations of level 1 and the subdivision of $LG$ into Iwahori double cosets. Relations between the Ekedahl-Oort stratification and the subdivision into Iwahori-double cosets are also used in the theory of moduli spaces of abelian varieties, see for example \cite{EkedahlvdGeer}, Corollary 8.4 (iii) or \cite{GY2}, 9.\\

We now compare Oort's minimal $p$-divisible groups (see \cite{Oort2}) to our notion of \sh elements. Let $X$ be a $p$-divisible group over an algebraically closed field $k$ and let $(\mathbf M,F)$ be its Dieudonn\'e module. Let $\mathbf N= \mathbf M\otimes_{W(k)}\Quot(W(k))$. By definition there is a unique isomorphism class of minimal $p$-divisible groups in each isogeny class of $p$-divisible groups (see \cite{Oort2}). Explicitly, if $X$ is minimal, its Dieudonn\'e module is isomorphic to a Dieudonn\'e module of the following form. There is a decomposition of the rational Dieudonn\'e module into simple summands $\mathbf N=\bigoplus_{i=1}^l \mathbf N_l$ such that $\mathbf M=\bigoplus_{i=1}^l \mathbf M\cap \mathbf N_i$. Let $\lambda_i=n_i/h_i$ with $(n_i,h_i)=1$ be the slope of $\mathbf N_i$. Then $\mathbf M\cap \mathbf N_i$ has a basis $e^i_1,\dotsc,e^i_{h_i}$ such that $F(e^i_j)=e^i_{j+n_i}$. Here we use the notation $e^i_{j+h_i}=pe^i_j$. Equivalently, $X$ is minimal if the endomorphisms of $(\mathbf M,F)$ are a maximal order in the endomorphisms of $(\mathbf N,F)$.

Let now $f^i_j=e^i_{h_i+1-j}$. Let $h=\dim \mathbf N$. One easily checks that if we write $F=b\sigma$ for $b\in GL_{h}(L)$ with respect to the basis $f^1_1,\dotsc,f^1_{h_1},f^2_1,\dotsc$, then $b$ is contained in the Levi subgroup $M$ given by the decomposition $\mathbf N=\bigoplus_{i=1}^l\mathbf N_i$. Furthermore, if $\mu$ denotes the $M$-dominant Hodge polygon of $b$ (with respect to the choice of the upper triangular matrices as Borel subgroup), then $\mu\in\{0,1\}^h$ is minuscule and $b=\tau_{\mu,M}$ satisfies $bI_Mb^{-1}=I_M$. Hence a $p$-divisible group is minimal if and only if the $K$-$\sigma$-conjugacy class of the element determining the Frobenius on the Dieudonn\'e module contains a \sh element, or equivalently (by Lemma \ref{lemfundshort}) a $P$-fundamental alcove.
 
\begin{kor}\label{coroort}
Let $X$ be a minimal $p$-divisible group over $k$ and let $Y$ be a $p$-divisible group with $X[p]\cong Y[p]$. Then $X\cong Y$. An analogous assertion holds for polarized $p$-divisible groups. 
\end{kor}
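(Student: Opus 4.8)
The plan is to deduce Corollary~\ref{coroort} from the discussion immediately preceding it, together with the results on truncations of level~$1$ and the comparison between the function field and arithmetic cases. The key observation already recorded is that $X$ minimal implies that the $K$-$\sigma$-conjugacy class of the element $b\in GL_h(W(k)[1/p])$ describing the Frobenius on the Dieudonn\'e module of $X$ contains a \sh element, or equivalently a $P$-fundamental alcove; more precisely $b$ lies in a Levi $M$ and equals $\tau_{\mu,M}$ with $bI_Mb^{-1}=I_M$, where $\mu\in\{0,1\}^h$ is the minuscule Hodge cocharacter. So the strategy is: translate the hypothesis $X[p]\cong Y[p]$ into the statement that $b$ and the element $b'$ describing $Y$ have the same truncation of level~$1$, and then show that a \bsh (equivalently $P$-fundamental) element is the \emph{unique} point in its $K$-$\sigma$-conjugacy class having a given truncation type, so that $b'$ is $K$-$\sigma$-conjugate to $b$, i.e.\ $Y\cong X$.

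First I would recall that for $G=GL_h$ (and for $GSp_{2h}$ in the polarized case) with $\mu$ minuscule, the Ekedahl-Oort invariant of $X[p]$ is exactly the truncation of level~$1$ $\tr(b)\in{}^{\mu}W$, as explained in Section~\ref{compeo} and in the paragraph before the corollary. Hence $X[p]\cong Y[p]$ is equivalent to $\tr(b)=\tr(b')=:(w,\mu)$. Next I would invoke Lemma~\ref{lemfundshort}: for split $G$ every $P$-fundamental alcove in $[b]$ is $W$-conjugate (hence $\sigma$-conjugate) to the unique \bsh element $x$ of $[b]$; in particular the $K$-$\sigma$-conjugacy class of $b$ contains $x$, and $\tr(x)=(w,\mu)$ as well. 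The crucial point is then that $x$ is determined by $(w,\mu)$ inside its $\sigma$-conjugacy class: by Theorem~\ref{propdef}(1) the truncation type determines the $K$-$\sigma$-conjugacy class of $K_1x\tau_{\mu}K_1$, but I actually need that it pins down $b$ up to $K$-$\sigma$-conjugacy, not merely up to $K_1$-double coset. Here I would use Lemma~\ref{remoortmain}: since $x$ is $P$-fundamental, every element of $IxI$ is $I$-$\sigma$-conjugate to $x$, and combining this with Theorem~\ref{propdef}(2) (each element of $Iw\tau_{\mu}I$ is $I$-$\sigma$-conjugate to an element of $K_1w\tau_{\mu}K_1$, and by the comparison $IxI$ meets $S_{w,\mu}$) shows that the full $K_1$-double coset stratum $S_{w,\mu}$ lies in a single $\sigma$-conjugacy class, namely $[x]$. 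Thus $b'$, lying in $S_{w,\mu}$, is $\sigma$-conjugate to $b$; and since both have the same truncation of level~$1$, they are in fact $K$-$\sigma$-conjugate, which is precisely $X\cong Y$.

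For the polarized case I would run the same argument with $G=GSp_{2g}$ in place of $GL_h$: the reduction of the polarized $p$-divisible group gives $b\in GSp_{2g}(W(k)[1/p])$ with $b\in K\mu(p)K$ for the minuscule $\mu$ of the Siegel datum, the Ekedahl-Oort invariant is again $\tr(b)\in{}^{\mu}W$, minimality of the polarized $p$-divisible group means $[b]$ contains a $P$-fundamental alcove for a $\sigma$-stable Levi $M$ (the explicit description before the corollary applies verbatim, with the polarization form respected by the chosen basis), and the same combination of Lemma~\ref{lemfundshort}, Lemma~\ref{remoortmain}, and Theorem~\ref{propdef} forces $X\cong Y$ as polarized $p$-divisible groups.

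The main obstacle I anticipate is the passage from ``same truncation type'' to ``$K$-$\sigma$-conjugate'': a priori Theorem~\ref{propdef}(1) only identifies $K$-$\sigma$-conjugacy classes of $K_1$-double cosets, whereas I need that the single geometric stratum $S_{w,\mu}$ (the set of $b$ with $\tr(b)=(w,\mu)$) is contained in one honest $\sigma$-conjugacy class when $w\tau_{\mu}$ comes from a $P$-fundamental alcove. This is exactly where $P$-fundamentality is indispensable: Lemma~\ref{remoortmain} collapses $IxI$ into $[x]$, and a $K$-$\sigma$-conjugacy argument (writing an arbitrary $b$ in the stratum as $I$-$\sigma$-conjugate into $Iw\tau_{\mu}I$ via Theorem~\ref{propdef}(2), then into $x$) upgrades this to the whole stratum. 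Making this chain of conjugations precise — in particular checking that $Iw\tau_{\mu}I$ really does meet $[x]$, using that $x$ and $w\tau_{\mu}$ have the same truncation and the comparison between Iwahori double cosets and truncation strata — is the step that needs care; the rest is bookkeeping with the already-established dictionary between Dieudonn\'e modules and elements of the loop group.
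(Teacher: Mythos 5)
There is a genuine gap, and it is precisely at the step you flag as the ``main obstacle.'' Your concluding inference --- ``Thus $b'$, lying in $S_{w,\mu}$, is $\sigma$-conjugate to $b$; and since both have the same truncation of level~$1$, they are in fact $K$-$\sigma$-conjugate'' --- is not valid: two elements being $\sigma$-conjugate (under $G(L)$) and having the same truncation type does \emph{not} in general imply that they are $K$-$\sigma$-conjugate. That implication, for elements whose class contains a $P$-fundamental alcove, is exactly the content of the corollary, so asserting it at the end makes the argument circular. Likewise, the intermediate claim that the whole stratum $S_{w,\mu}$ lies in a single $\sigma$-conjugacy class $[x]$ is unsupported as stated (and would in any case only buy you $G(L)$-$\sigma$-conjugacy, not what is needed), and the route through $K_1w\tau_{\mu}K_1\subseteq Iw\tau_{\mu}I$ is problematic because $w\tau_{\mu}$ itself need not be $P$-fundamental (the paper explicitly warns that in general neither \bsh elements nor the representatives $w\tau_{\mu}$ are fundamental alcoves), so Lemma~\ref{remoortmain} does not collapse $Iw\tau_{\mu}I$.

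The missing observation is simpler than the machinery you invoke and makes the detour through $\sigma$-conjugacy classes, strata, Lemma~\ref{lemfundshort}, and Theorem~\ref{propdef}(2) unnecessary. By the very definition of $\tr$ (as an invariant of the $K$-$\sigma$-conjugacy class in $K_1\backslash G(L)/K_1$, with $K_1$ normal in $K$), the equality $\tr(b_Y)=\tr(b_X)$ means directly that $b_Y$ is $K$-$\sigma$-conjugate to some element of $K_1\,b_X\,K_1$ --- with $b_X$ itself, not $w\tau_{\mu}$, as the anchor. Choose the trivialization of the Dieudonn\'e module of $X$ so that $b_X$ is a $P$-fundamental alcove (as the discussion preceding the corollary allows). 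Since $K_1\subseteq I$, we get $b_Y$ $K$-$\sigma$-conjugate to an element of $K_1b_XK_1\subseteq Ib_XI$; then Lemma~\ref{remoortmain} gives an $I$-$\sigma$-conjugation of that element to $b_X$, and $I\subseteq K$ finishes. In short, fundamentality of $b_X$ plus $K_1b_XK_1\subseteq Ib_XI$ already gives $K$-$\sigma$-conjugacy directly; one never needs to pass through $\sigma$-conjugacy classes or $Iw\tau_{\mu}I$ at all.
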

This reproves the main theorem of \cite{Oort2}.
\begin{proof}
We use Dieudonn\'e theory and trivialize the Dieudonn\'e modules of $X$ and $Y$ to reformulate the assertion. Let $G=GL_h$ resp. $GSp_{h}$ where $h$ is the height of $X$. Let $b_X\in G(W(k)[1/p])$ be the element describing the Frobenius on the Dieudonn\'e module of $X$. As $X$ is minimal we can choose the trivialization in such a way that $b_X$ is a $P$-fundamental alcove for some $P$. Let $b_Y\in G(W(k)[1/p])$ be the element describing the Frobenius on the Dieudonn\'e module of $Y$. As $X[p]\cong Y[p]$ we can choose the trivialization in such a way that $b_Y\in K_1 b_XK_1$. By Lemma \ref{remoortmain} $b_X$ and $b_Y$ are $I$-$\sigma$-conjugate to each other. In particular, the are $K$-$\sigma$-conjugate which implies that $X\cong Y$.
\end{proof}

It would be interesting to construct a generalization of minimal $p$-divisible groups for all good reductions of PEL Shimura varieties. In particular, one would be interested in a representative of a given isogeny class of $p$-divisible groups with endomorphisms and polarization satisfying the analogue of Corollary \ref{coroort}. Although we constructed $P$-fundamental alcoves for all $\sigma$-conjugacy classes of elements of $G(L)$ for all $G$, our theory does not imply the existence of such minimal $p$-divisible groups with extra structure for non-split $G$. The reason is that we did not study whether there exist $P$-fundamental alcoves in a given $\sigma$-conjugacy class which in addition lie in the prescribed $K$-double coset given by $\mu$. A weaker generalization of the notion of minimality would be to call a $p$-divisible group with PEL structure minimal if the $K$-$\sigma$-conjugacy class of the element determining the Frobenius on the Dieudonn\'e module contains a \sh element. Our theory implies the existence of such elements in each isogeny class, compare the discussion after Corollary \ref{thmmain'}.\\

One interesting open question about Ekedahl-Oort strata is to determine which Newton polygons occur in a given Ekedahl-Oort stratum. Our theory for loop groups (in particular Theorem \ref{thmmain}) together with the comparison results of the preceding section yield the following necessary condition.

\begin{kor}\label{thmmain'}
Let $x$ be a $k$-valued point of $S_w$ for some $w$. Let $x_0\in \mathcal{A}_g(k)$ be a point corresponding to the minimal $p$-divisible group in the isogeny class corresponding to $x$. Then $x_0\in \overline{S_w}$.
\end{kor}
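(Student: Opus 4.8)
The plan is to translate Corollary~\ref{thmmain'} into a statement about loop groups over the function field case, prove it there using Theorem~\ref{thmmain}, and carry the result back to the arithmetic side using the comparison results of Section~\ref{secfundalc}. First I would set up the dictionary: a point $x\in S_w(k)$ gives a polarized $p$-divisible group $(A,\lambda)[p^\infty]$, and after trivializing its Dieudonn\'e module we obtain $b_x\in G(W(k)[1/p])$ with $G=GSp_{2g}$, well-defined up to $K$-$\sigma$-conjugacy, lying in $K\mu(p)K$ for the minuscule $\mu$ from the Shimura datum. By the discussion following Theorem~\ref{propdef}, the condition $x\in S_w$ means exactly that $\tr(b_x)=(w,\mu)$ for this fixed $\mu$. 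Likewise the minimal $p$-divisible group $x_0$ in the isogeny class $[b_x]$ corresponds (by the discussion just before Corollary~\ref{coroort}) to a $K$-$\sigma$-conjugacy class containing a $P$-fundamental alcove, equivalently (by Lemma~\ref{lemfundshort}, noting $GSp_{2g}$ is split) a \sh element, in the same $\sigma$-conjugacy class $[b_x]$; and $x_0\in S_{w_0}$ where $(w_0,\mu)=\tr(x_0)$.

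Next I would invoke Theorem~\ref{thmmain} (via Theorem~\ref{corsuperset}, which lets us pass freely between $k((t))$ and $W(k)[1/p]$): since $\tr(b_x)=(w,\mu)$, the closure $\overline{S_{w,\mu}}$ in $LG$ contains a \bsh element $y$ for $[b_x]$. Because $G=GSp_{2g}$ is split, Lemma~\ref{lemshort} says this \bsh element is unique, so its truncation type $\tr(y)=(w_0,\mu_0)$ is exactly the truncation type attached to the minimal $p$-divisible group---here I must check that the minuscule $\mu$ appearing for $y$ agrees with the $\mu$ of the Shimura datum. This holds because $y$ lies in $[b_x]$ and is \bsh, hence (by the explicit form of the Dieudonn\'e module of a minimal $p$-divisible group recalled before Corollary~\ref{coroort}) its Hodge cocharacter is the $M$-dominant minuscule $\mu\in\{0,1\}^h$ with the same $\kappa$-invariant, and for $GSp_{2g}$ with the given Shimura datum this forces $\mu_0=\mu$ (both are the standard minuscule coweight with entries $p$ with multiplicity $g$). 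So $\tr(x_0)=\tr(y)=(w_0,\mu)$ and $S_{w_0,\mu}\subseteq\overline{S_{w,\mu}}$ in $LG$ by Lemma~\ref{thmclosure}(2), since $\overline{S_{w,\mu}}$ is a union of strata and contains $y\in S_{w_0,\mu}$.

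Finally I would transport the containment $S_{w_0,\mu}\subseteq\overline{S_{w,\mu}}$ from the loop group of $GSp_{2g}$ to the Ekedahl-Oort stratification of $\mathcal{A}_g$. This is exactly the content of the comparison recalled in Section~\ref{compeo}: the closure relations among Ekedahl-Oort strata $S_{w'}\subseteq\overline{S_w}$ in (the reduction of) $\mathcal{A}_g$ are given by the same combinatorial formula as the closure relations among the loop group strata $S_{w',\mu}\subseteq\overline{S_{w,\mu}}$ for $G=GSp_{2g}$ (Wedhorn \cite{Wedhorn} together with Corollary~\ref{corclosure}). Hence $S_{w_0,\mu}\subseteq\overline{S_{w,\mu}}$ in $LG$ implies $S_{w_0}\subseteq\overline{S_w}$ in $\mathcal{A}_g$, and since $x_0\in S_{w_0}(k)$ we conclude $x_0\in\overline{S_w}$, as desired.

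I expect the main obstacle to be the bookkeeping around the fixed minuscule cocharacter $\mu$: Theorem~\ref{thmmain} only asserts the existence of a \bsh element in $\overline{S_{w,\mu}}$, and one must argue that this element's truncation type really is the truncation type of the \emph{minimal} $p$-divisible group sitting in $\mathcal{A}_g$---i.e.\ that the minuscule component of its truncation type is the Shimura $\mu$ rather than some smaller coweight in the same $\sigma$-conjugacy class. The uniqueness of \bsh elements for split $G$ (Lemma~\ref{lemshort}) and the explicit description of minimal Dieudonn\'e modules before Corollary~\ref{coroort} are what make this work, but spelling out the match with the $GSp_{2g}$ Shimura datum and checking the polarization (symplectic) version alongside the $GL_h$ version requires care. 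The remaining steps---applying Theorem~\ref{thmmain}/Theorem~\ref{corsuperset} and quoting the closure-relation comparison of Section~\ref{compeo}---are formal once this identification is in place.
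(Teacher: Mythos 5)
Your proposal is correct and follows essentially the same route as the paper: pass from $x\in S_w(k)$ to $\tr(b_x)=(w,\mu)$ in $LG$ via Theorem \ref{corsuperset}, invoke Theorem \ref{thmmain} to place the (unique, since $GSp_{2g}$ is split) \bsh element of $[b_x]$ in $\overline{S_{w,\mu}}$, identify it with the minimal $p$-divisible group, and then transport the containment $S_{w_0,\mu}\subseteq\overline{S_{w,\mu}}$ to $\mathcal{A}_g$ via the known coincidence of closure relations (Wedhorn together with Corollary \ref{corclosure}). The only place you go beyond the paper is in explicitly flagging and checking that the minuscule component $\mu_0$ of the short element's truncation type equals the Shimura cocharacter $\mu$; the paper simply writes ``Let $(w_0,\mu)$ be its truncation type'' and leaves this implicit. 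Your justification is sound but can be streamlined: any $p$-divisible group of height $2g$ and dimension $g$ (in particular the minimal one in the given isogeny class) has Hodge cocharacter exactly $\mu=(1^g,0^g)$, so $\mu_0=\mu$ with no appeal to minimality in the $\preceq$-order being needed. Otherwise your argument and the paper's coincide.
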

\begin{proof}
Let $\mu$ be the Hodge vector associated with $\mathcal{A}_g$. Let $[b]\in B(G)$ be the class corresponding to the isogeny class of the $p$-divisible group corresponding to $x$. By Theorem \ref{corsuperset} the corresponding class $[b]$ in $LG$ intersects the truncation stratum $S_{w,\mu}\subseteq LG$. Let $b_0\in \widetilde W$ be a \bsh element. Then the representative of $b_0$ in $GSp_{2g}(W(k)[1/p])$ describes the Dieudonn\'e module of the minimal $p$-divisible group in $[b]$. Let $(w_0,\mu)$ be its truncation type. From Theorem \ref{thmmain} we obtain that $S_{w_0,\mu}\subseteq \overline {S_{w,\mu}}$ in $LG$. Recall that the closure relations between the strata $S_{w}$ are known to coincide with those between the corresponding strata $S_{w,\mu}$ in the loop group of $G=GSp_{2g}$. Thus the corollary follows.
\end{proof}

For the Siegel moduli space $\mathcal{A}_g$ this has been conjectured by Oort \cite{Oort1}, Conjecture 6.9 and has been shown previously by Harashita in \cite{H1}, \cite{H2}, \cite{H3} using different methods. While this article was being finished, Harashita published a preprint \cite{H4} in which he proves an analog of Corollary \ref{thmmain'} for some catalog of $p$-divisible groups in the non-polarized case (without endomorphisms). Our approach to prove Corollary \ref{thmmain'} also leads to variants without polarization, and/or with endomorphisms: Let $S_w$ denote the truncation strata in a moduli space of abelian varieties associated with a PEL Shimura variety with good reduction at $p$. The same proof as above then shows that $x\in S_w(k)$ for some $w\in {}^{\mu}W$ implies that there is an element $x_0$ whose $p$-divisible group (with extra structure) is isogenous to the one corresponding to $x$, such that the associated element $b_{x_0}\in G(W(k)[1/p])$ is short, and such that $x_0\in\overline{S_w}$. This element is in general (for non-split $G$) not uniquely defined by the isogeny class (compare Lemma \ref{lemshort}). For more details we refer to \cite{VW}.

For the moduli space $\mathcal{A}_g$ of principally polarized abelian varieties of dimension $g$ in characteristic $p>2$ we know by \cite{EkedahlvdGeer}, Theorem 11.5 that each Ekedahl-Oort stratum which is not contained in the supersingular locus is irreducible. In particular, there is a unique generic Newton polygon in each Ekedahl-Oort stratum $S_w$ of $\mathcal{A}_g$. Then in the same way as for the loop group we can use the above result to determine this Newton polygon.
\begin{kor}
Let $\nu$ be the generic Newton polygon in $S_w\subseteq \mathcal{A}_g$ for some $w\in {}^{\mu}W$. Then $\nu$ is the maximal element in the set of Newton polygons of short elements $x$ such that $x\in \overline{S_{w}}$.
\end{kor}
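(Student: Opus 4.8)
The plan is to transport Corollary~\ref{corgennp}---the analogue of this statement for truncation strata in the loop group---to the Siegel moduli space, using the comparison Theorem~\ref{corsuperset}, the identification of the two systems of closure relations recalled above, and the irreducibility of Ekedahl--Oort strata. Throughout one uses that the invariant $\kappa_G$ is constant on the reduction of $\mathcal{A}_g$, equal to the image of the minuscule cocharacter $\mu$ of the Shimura datum; hence on any $S_w$ a Newton polygon determines, and is determined by, the $\sigma$-conjugacy class in $B(GSp_{2g})$ to which it belongs, so comparing Newton polygons and comparing $\sigma$-conjugacy classes amount to the same thing.

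First I would record that $S_w\subseteq\mathcal{A}_g$ is irreducible when it is not contained in the supersingular locus (\cite{EkedahlvdGeer}, Theorem~11.5), while in the supersingular case there is only one Newton polygon and the statement is vacuous. Thus $S_w$ has a well-defined generic point, and by the Grothendieck specialization theorem (\cite{RapoportRichartz}, Theorem~3.6; compare Remark~\ref{remit}) the generic Newton polygon $\nu$ is the maximal element of the set of Newton polygons occurring on $S_w$. By Theorem~\ref{corsuperset}, a class in $B(GSp_{2g})$ meets $S_w\subseteq\mathcal{A}_g$ if and only if the corresponding class in $LG$ meets the truncation stratum $S_{w,\mu}\subseteq LG$; hence the Newton polygons occurring on $S_w$ are exactly those of the $\sigma$-conjugacy classes meeting $S_{w,\mu}$, and in particular $\nu$ is also the generic Newton polygon of $S_{w,\mu}$. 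By Corollary~\ref{corgennp} applied to $G=GSp_{2g}$, this generic class is the unique maximal element among the $\sigma$-conjugacy classes of \sh elements $x\in\widetilde W$ with $x\in\overline{S_{w,\mu}}$.

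It remains to match this set of \sh elements with the one in the assertion. If $x\in\widetilde W$ is \sh and $x\in\overline{S_{w,\mu}}\subseteq\overline{K\epsilon^\mu K}$, then $x\in K\epsilon^{\mu'}K$ for some dominant $\mu'\preceq\mu$; since $\mu$ is minuscule this forces $\mu'=\mu$, so $\tr(x)=(w',\mu)$ for some $w'$. The representative of $x$ in $GSp_{2g}(W(k)[1/p])$ is then the Frobenius of a minimal $p$-divisible group of Hodge type $\mu$, which by the symmetry of its (symplectic) Newton polygon carries a principal polarization and so defines a $k$-point $x_0\in S_{w'}$ of the reduction of $\mathcal{A}_g$; conversely, any \sh element whose associated point lies in $\overline{S_w}$ arises this way, its truncation type having second coordinate $\mu$ because its Hodge type is $\mu$. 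Since the closure relations among Ekedahl--Oort strata of $\mathcal{A}_g$ are given by the same formula as those among the loop-group strata $S_{w',\mu}$ (Corollary~\ref{corclosure} together with \cite{Wedhorn} and (6.4) of loc.~cit.), we obtain $x_0\in\overline{S_w}$ if and only if $S_{w',\mu}\subseteq\overline{S_{w,\mu}}$, i.e.~if and only if $x\in\overline{S_{w,\mu}}$. Thus the \sh elements appearing in Corollary~\ref{corgennp} are precisely the \sh elements $x$ with $x\in\overline{S_w}$ in the sense of the present statement, and passing to Newton polygons (using once more that on these strata a Newton polygon determines the class) yields the claim.

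The step I expect to require the most care is the last paragraph: one has to verify that a \sh element lying in $\overline{S_{w,\mu}}$ genuinely gives an honest $k$-point of $\mathcal{A}_g$---which, thanks to $\mu$ being minuscule and the symmetry of symplectic Newton polygons, reduces to exactly the facts already used in the proof of Corollary~\ref{thmmain'}---and that the dictionary between the two systems of closure relations is invoked in the correct direction. Everything else is a formal consequence of Corollary~\ref{corgennp}, Theorem~\ref{corsuperset} and the irreducibility of $S_w$.
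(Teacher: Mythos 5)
The paper itself gives no detailed proof; it merely says ``in the same way as for the loop group we can use the above result [Corollary~\ref{thmmain'}] to determine this Newton polygon''. Your proposal fills this in by the same circle of ideas the paper uses for Corollary~\ref{thmmain'}: the comparison Theorem~\ref{corsuperset}, the comparison of closure relations (Corollary~\ref{corclosure} with \cite{Wedhorn}), Grothendieck specialization, and the loop-group statement Corollary~\ref{corgennp}. This is essentially the intended argument.

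One small imprecision worth flagging: in your second paragraph you assert, as a direct consequence of Theorem~\ref{corsuperset}, that the Newton polygons occurring on $S_w\subseteq\mathcal{A}_g$ are \emph{exactly} those of $\sigma$-conjugacy classes meeting $S_{w,\mu}\subseteq LG$, and deduce that $\nu$ equals the generic Newton polygon of $S_{w,\mu}$. Theorem~\ref{corsuperset} compares $LG$ with $G(W(k)[1/p])$, not with $\mathcal{A}_g$; the ``only if'' direction (from $G(W(k)[1/p])$ to $\mathcal{A}_g$) needs the algebraization fact that a Dieudonn\'e module of the right truncation type is realized by a point of $\mathcal{A}_g$. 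You invoke exactly this fact in your third paragraph for \sh elements, which is also where the paper tacitly uses it (in choosing $x_0\in\mathcal{A}_g(k)$ in Corollary~\ref{thmmain'}), and that suffices: one then gets $\nu\preceq\nu'$ trivially and $\nu'\preceq\nu$ by producing, from a \sh element $x'\in\overline{S_{w,\mu}}$ with Newton polygon $\nu'$, a point $x_0'\in\overline{S_w}$ and applying specialization. So the gap is cosmetic rather than logical; it would be cleaner to phrase the deduction of $\nu=\nu'$ along those lines instead of asserting the full bijection of Newton polygon sets at that stage.
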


\end{document}